\documentclass[11pt,reqno]{amsart}
\usepackage{amsmath,amssymb,amsfonts}
\usepackage[utf8]{inputenc}
\usepackage{enumerate}
\usepackage[paperwidth=8.26in,paperheight=11.69in,
left=1.125in, right=1.125in, bottom=1.25in]{geometry}
\usepackage{thmtools}
\declaretheoremstyle[
 headfont=\normalfont\bfseries,
 headindent= 0pt,
 bodyfont=\em,
 spaceabove=8pt,
 spacebelow=8pt
]{thm}
\declaretheoremstyle[
 headfont=\normalfont\em,
 headindent= 0pt,
 spaceabove=8pt,
 spacebelow=8pt
]{remark}
\declaretheoremstyle[
 headfont=\normalfont\bfseries,
 headindent= 0pt,
 spaceabove=8pt,
 spacebelow=8pt
]{example}
\declaretheoremstyle[
 headfont=\normalfont\bfseries,
 headindent= 0pt,
 spaceabove=8pt,
 spacebelow=8pt
]{definition}
\declaretheorem[name=Theorem,style=thm,numberwithin=section,
]{thm}
\declaretheorem[name=Proposition,style=thm,sibling=thm]{prop}
\declaretheorem[name=Lemma,style=thm,sibling=thm]{lem}
\declaretheorem[name=Corollary,style=thm,sibling=thm]{cor}
\declaretheorem[name=Example,style=example,sibling=thm]{example}
\declaretheorem[name=Definition,style=thm,sibling=thm]{defn}
\declaretheorem[name=Remark,style=remark]{rem}

\usepackage{cleveref}
\crefname{thm}{Theorem}{Theorems}
\crefname{prop}{Proposition}{Propositions}
\crefname{lem}{Lemma}{Lemmas}
\crefname{cor}{Corollary}{Corollaries}
\crefname{example}{Example}{Examples}
\crefname{defn}{Definition}{Definitions}
\crefname{rem}{Remark}{Remarks}

\crefname{enumi}{}{}
\crefname{enumii}{}{}
\crefname{enumiii}{}{}
\crefname{equation}{}{}

\numberwithin{equation}{section}

\newcommand{\Wm}{\mathcal{W}}

\newcommand{\aba}{\bar{\alpha}}
\newcommand{\bba}{\bar{\beta}}

\newcommand{\sba}{\bar{\sigma}}
\newcommand{\wba}{\bar{w}}
\newcommand{\Wba}{\overline{W}}
\newcommand{\zba}{\bar{z}}
\newcommand{\Zba}{\overline{Z}}
\newcommand{\mean}{H}

\newcommand{\jba}{\bar{j}}
\newcommand{\kba}{\bar{k}}

\DeclareMathOperator{\Ric}{Ric}

\DeclareMathOperator{\vol}{Vol}

\DeclareMathOperator{\tf}{tf}

\renewcommand{\Re}{\operatorname{Re}}

\newcommand{\II}{I\!I}
\linespread{1}
\begin{document}
\title[Semi-isometric CR immersions of CR manifolds]{Semi-isometric CR immersions of CR manifolds into Kähler manifolds and applications}
\author{Duong Ngoc Son}
\address{Fakultät für Mathematik, Universität Wien, Oskar-Morgenstern-Platz 1, 1090 Wien, Austria }
\email{son.duong@univie.ac.at}

\date{March 7, 2020}
\subjclass[2000]{32W10, 32V20, 32V40.}
\thanks{The author was supported by the Austrian Science Fund, FWF-Projekt M 2472-N35.}
\begin{abstract}
	We study the second fundamental form of semi-isometric CR immersions from strictly pseudoconvex CR manifolds into K\"ahler manifolds. As an application, we give a precise condition for the CR umbilicality of real hypersurfaces, extending an well-known theorem by Webster on the nonexistence of CR umbilical points on generic real ellipsoids. As other applications, we extend the linearity theorem of Ji-Yuan for CR immersions into spheres with vanishing second fundamental form to the important case of three-dimensional manifolds, and prove the ``first gap'' theorem in the spirit of Webster, Faran, Cima-Suffridge, and Huang for semi-isometric CR immersions into a complex euclidean space of ``low'' codimension. Our new approach to the linearity theorem is based on the study of the first positive eigenvalue of the Kohn Laplacian.
\end{abstract}
\maketitle

\section{Introduction}
Let $\iota \colon M \hookrightarrow \mathbb{C}^{n+1}$ be a strictly pseudoconvex CR manifold and $\rho$ a strictly plurisubharmonic defining function for $M$, i.e., $M = \{Z\in \mathbb{C}^{n+1} \colon \rho(Z) = 0\}$ and $d\rho \ne 0$ on $M$. Then $\rho$ induces a Kähler metric $\omega:= i\partial\bar{\partial}\rho$ in a neighborhood of $M$ in $\mathbb{C}^{n+1}$ and a pseudohermitian structure $\theta: = \iota^{\ast}(i\bar{\partial}\rho)$ on $M$. The Kähler geometry of $\omega$ and the pseudohermitian geometry of $\theta$ have interesting relations, as exploited implicitly in, for examples,  \cite{webster1978pseudo,li--luk,li--son,li--lin--son}. In this paper, we consider the following generalizations: Let $(\mathcal{X},\omega)$ be a complex hermitian manifold with the fundamental $(1,1)$-form $\omega$. A smooth CR immersion $F\colon (M,\theta)\to (\mathcal{X},\omega)$ is said to be \textit{semi-isometric} if 
\begin{equation}
	d\theta = F^{\ast} \omega.
\end{equation}
In this case, we identify $M$ locally with its image $F(M)\subset \mathcal{X}$ and consider the inclusion $\iota \colon F(M) \hookrightarrow \mathcal{X}$, and say that $(M,\theta)$ is a \textit{pseudohermitian submanifold} of $(\mathcal{X},\omega)$. We identify $\mathbb{C}TM$ as a subspace of $\mathbb{C}T\mathcal{X}$ in the natural way and define the \textit{pseudohermitian second fundamental form} $\II$ by using the Chern and Tanaka-Webster connections on $\mathcal{X}$ and $M$, respectively. Precisely, if $Z$ and $W$ are two vectors tangent to $M$ which extend smoothly to a neighborhood of a point $p\in M$ in $\mathcal{X}$, then $\II$ is defined by the following Gauß formula:
\begin{equation}\label{e:gaussform}
\II(Z,W) 
:=
\widetilde{\nabla}_ZW - \nabla_ZW,
\end{equation}
where $\widetilde{\nabla}$ and $\nabla$ are the Chern and Tanaka-Webster connections on $\mathbb{C}T\mathcal{X}$ and $\mathbb{C}TM$, respectively. Observe that if $W\in \Gamma(T^{1,0}M)$, then $\II(Z,W)$ is a section of $T^{1,0}\mathcal{X}$ along $M$. Therefore, we define the $(1,0)$-\textit{mean curvature vector} $H$ to be the $(1,0)$-field along $M$ given by
\begin{equation}
	\mean
	:=
	\frac{1}{n} \sum_{\alpha=1}^n \II(Z_{\aba}, Z_{\alpha}),
\end{equation}
where $\{Z_{\alpha} \colon \alpha = 1,2,\dots , n\}$ is an orthonormal basis for $T^{1,0}M$ (with respect to the Levi-metric $-id\theta$) and $Z_{\aba}:= \Zba_{\alpha}$ is the conjugate basis. In analogy with the notion of the mean curvature for Riemannian immersions, we call $|\mean|$ the \textit{mean curvature function} of~$M$ in~$\mathcal{X}$.

The first purpose of this paper is to show that the squared mean curvature function $|H|^2$ agrees with the so-called transverse curvature of \cite{graham1988smooth} when $(M,\theta)$ is defined by an appropriate function. Therefore, by a recent result of Li and the author \cite{li--son}, $n$ times the average value of $|H|^2$ gives an upper bound for the first positive eigenvalue $\lambda_1$ of the Kohn Laplacian~$\Box_b$ in the case $(\mathcal{X},\omega)$ is the complex euclidean space and $M$ is compact. Recall that if $(M,\theta)$ is a compact strictly pseudoconvex embeddable CR manifold, then $\Box_b: = \bar{\partial}_b^{\ast} \bar{\partial}_b$ acting on functions is a nonnegative self-adjoint operator on $L^2(M,d\vol_{\theta})$, where $d\vol_{\theta} := \theta\wedge (d\theta)^n$. The spectrum of $\Box_b$ consists of $0$ and positive eigenvalues $\lambda_1 < \lambda_2 < \cdots < \lambda_k < \cdots \to \infty$, each has finite multiplicity \cite{beals1988calculus,burns--epstein}. As mentioned above, the ``Reilly-type'' bound for $\lambda_1$ of \cite{li--son} can be reformulated as follows.
\begin{thm}[Li-Son \cite{li--son}]\label{thm:1}
	Let $(M^{2n+1},\theta)$ be a compact strictly pseudoconvex pseudohermitian manifold, $F \colon M\to \mathbb{C}^N$ a semi-isometric CR immersion, and $\lambda_1$ the first positive eigenvalue of the Kohn Laplacian. Then
	\begin{equation}\label{e:est0}
	\lambda_1 \leq \frac{n}{\vol(M)} \int_M \left(|\mean_{F(M)}|^2 \circ F\right) \,d\vol_{\theta}.
	\end{equation}
	If the equality holds, then each $b^I: = \Box_b \overline{F}^I$, $I = 1,2,\dots, N$, is either a constant or an eigenfunction that corresponds to $\lambda_1$. 
\end{thm}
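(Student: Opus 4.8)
The plan is to run a CR analogue of Reilly's classical eigenvalue estimate, using the conjugated coordinate functions $\overline{F}^1,\dots,\overline{F}^N$ as test functions for $\Box_b$; throughout, $\langle\cdot,\cdot\rangle$ and $\|\cdot\|$ denote the $L^2(M,d\vol_{\theta})$ inner product and norm. Two pointwise identities come first. First, since each $F^I$ is a CR function the pushforward $F_\ast Z_\alpha=\sum_I(Z_\alpha F^I)\,\partial_{Z^I}$ is of type $(1,0)$, so evaluating $d\theta=F^\ast\omega$ on $T^{1,0}M\times T^{0,1}M$ gives $h_{\alpha\bba}=\sum_I(Z_\alpha F^I)\overline{Z_\beta F^I}$; contracting with $h^{\alpha\bba}$ yields $\sum_I|\bar\partial_b\overline{F}^I|^2\equiv n$ on $M$, hence $\sum_I\|\bar\partial_b\overline{F}^I\|^2=n\,\vol(M)$. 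Second, I would compute $\II(Z_{\aba},Z_\alpha)$ from the Gauß formula \eqref{e:gaussform}: because the Chern connection of $\mathbb{C}^N$ is flat, $\widetilde{\nabla}_{Z_{\aba}}(F_\ast Z_\alpha)$ has components $Z_{\aba}(Z_\alpha F^I)=[Z_{\aba},Z_\alpha]F^I$ (using $Z_{\bba}F^I=0$), and expanding the bracket by the Tanaka--Webster structure equations, all terms involving the connection forms $\omega_\alpha{}^\beta$ cancel against $\nabla_{Z_{\aba}}Z_\alpha$ while all terms in a $(0,1)$-direction are annihilated by CR-ness, leaving only the Reeb term:
\[
\II(Z_{\aba},Z_\alpha)=-\cplxi\,h_{\alpha\aba}\sum_I(TF^I)\,\partial_{Z^I},\qquad\text{so}\qquad \mean^I=-\cplxi\,TF^I .
\]
The same commutation identity, applied to $\Box_b\overline{F}^I=\bar\partial_b^{\ast}\bar\partial_b\overline{F}^I$, gives $b^I:=\Box_b\overline{F}^I=-n\,\overline{\mean^I}$ (with convention-dependent signs); in particular $\sum_I|b^I|^2=n^2|\mean|^2$ pointwise.

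With these in hand, I would invoke the spectral theory of $\Box_b$ on the compact embeddable CR manifold $M$. Let $P$ be the orthogonal projection of $L^2(M,d\vol_{\theta})$ onto $\ker\Box_b$, the space of CR functions, and put $\beta^I:=\overline{F}^I-P\overline{F}^I$, so that $\bar\partial_b\beta^I=\bar\partial_b\overline{F}^I$, $\Box_b\beta^I=b^I$, and $\beta^I\perp\ker\Box_b$. Since the spectrum of $\Box_b$ on $(\ker\Box_b)^{\perp}$ starts at $\lambda_1$,
\[
\lambda_1\|\beta^I\|^2\le\langle\Box_b\beta^I,\beta^I\rangle=\|\bar\partial_b\overline{F}^I\|^2 ,
\]
and summing over $I$ with the first identity gives $\lambda_1\sum_I\|\beta^I\|^2\le n\,\vol(M)$. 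On the other hand $b^I$ lies in the range of the self-adjoint operator $\Box_b$, hence is orthogonal to $\ker\Box_b$, so $\langle b^I,\overline{F}^I\rangle=\langle b^I,\beta^I\rangle$ and therefore $\sum_I\langle b^I,\beta^I\rangle=\sum_I\langle\Box_b\overline{F}^I,\overline{F}^I\rangle=\sum_I\|\bar\partial_b\overline{F}^I\|^2=n\,\vol(M)$. Cauchy--Schwarz in $\bigoplus_{I=1}^{N}L^2(M)$ together with $\sum_I\|b^I\|^2=n^2\int_M|\mean|^2\,d\vol_{\theta}$ then gives
\[
\bigl(n\,\vol(M)\bigr)^2\le n^2\Bigl(\int_M|\mean|^2\,d\vol_{\theta}\Bigr)\sum_I\|\beta^I\|^2 ,
\]
i.e. $\sum_I\|\beta^I\|^2\ge\vol(M)^2\big/\int_M|\mean|^2\,d\vol_{\theta}$. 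Combining this with $\lambda_1\sum_I\|\beta^I\|^2\le n\,\vol(M)$ yields $\lambda_1\le\frac{n}{\vol(M)}\int_M|\mean|^2\,d\vol_{\theta}$, which is \eqref{e:est0}.

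For the equality statement I would read the two estimates backwards. Equality in \eqref{e:est0} forces equality in the summed Rayleigh inequality, hence $\lambda_1\|\beta^I\|^2=\langle\Box_b\beta^I,\beta^I\rangle$ for every $I$; thus each $\beta^I$ is either $0$ or an eigenfunction of $\Box_b$ with eigenvalue $\lambda_1$. Since $b^I=\Box_b\beta^I$, the first alternative gives $b^I=0$, a constant, and the second gives $b^I=\lambda_1\beta^I$, again a $\lambda_1$-eigenfunction; this is exactly the asserted dichotomy. (Equality also propagates to Cauchy--Schwarz, i.e. $\beta^I=c\,b^I$ with a common constant $c$, but this is automatic from the above and is not needed.)

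The step I expect to require the most care is the second identity: the cancellation in the Gauß formula producing $\mean^I=-\cplxi\,TF^I$ and $b^I=-n\,\overline{\mean^I}$. It rests on the precise interplay of the Chern connection on $\mathbb{C}^N$, the Tanaka--Webster connection on $M$ (with its pseudohermitian torsion), and the CR-ness of the $F^I$, and pinning down all constants and signs in a fixed set of conventions is the bulk of the work. Everything else — the semi-isometry bookkeeping and the Hilbert-space manipulation — is routine once $\Box_b$ is known to be self-adjoint with closed range and discrete positive spectrum, which holds for compact embeddable strictly pseudoconvex $M$ by \cite{beals1988calculus,burns--epstein}. I note finally that this identity is precisely what links $|\mean|^2$ to the transverse curvature of \cite{graham1988smooth}, so that \eqref{e:est0} may alternatively be read off directly from the Li--Son inequality \cite{li--son}.
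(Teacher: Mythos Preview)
Your proof is correct and shares the paper's two key pointwise identities: $\sum_I|\bar\partial_b\overline{F}^I|^2\equiv n$ from semi-isometry, and $\sum_I|\Box_b\overline{F}^I|^2=n^2|H|^2$ from the Beltrami-type formula (this is exactly \cref{prop:beltrami} in the paper, so your anticipated ``bulk of the work'' on the Gau\ss\ formula is already packaged there). The only difference is in the Hilbert-space step. The paper bypasses your projections $\beta^I$ and the Cauchy--Schwarz detour entirely, using instead the single spectral inequality
\[
\lambda_1\int_M|\bar\partial_b f^I|^2\,d\vol_\theta\ \le\ \int_M|\Box_b f^I|^2\,d\vol_\theta,
\]
valid for every $f^I$ (in an eigenfunction expansion, terms in $\ker\Box_b$ contribute zero to both sides, and each positive eigenvalue satisfies $\mu_k\ge\lambda_1$). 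Summing over $I$ gives $\lambda_1\cdot n\vol(M)\le n^2\int_M|H|^2$ in one line. Your route reaches the same conclusion but through two coupled inequalities, which then both have to be unwound in the equality case; the paper's version makes that case immediate, since equality in the summed inequality forces equality for each $I$, i.e.\ $\overline{F}^I\in\ker\Box_b\oplus E_{\lambda_1}$, whence $b^I=\Box_b\overline{F}^I$ is zero or a $\lambda_1$-eigenfunction.
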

This theorem is a main motivation for this paper and plays a crucial role in the proof of the linearity result below (\cref{thm:2}). As already mentioned, we shall prove in \cref{sec:transverse} that $|\mean|^2$ and the transverse curvature $r(\rho)$ of a defining function $\rho$ coincide if $\rho$ is chosen appropriately and hence \cref{e:est0} follows from Theorem~1.1 in \cite{li--son}. We shall also show that \cref{e:est0} follows from a bit more general estimate for $\lambda_1$ in terms of the ``pseudohermitian total tension'' of a $\mathcal{C}^2$ map into a K\"ahler manifold; see \cref{sec:tension} for precise definitions.

The second purpose of this paper is to study some natural questions that arise when considering semi-isometric immersions into a complex euclidean space. Our answers to these questions generalize some well-known results for CR immersions into the unit sphere of $\mathbb{C}^N$ in \cite{webster1979rigidity,faran1986linearity,huang1999linearity,ji2010flatness}. In particular, we shall address the questions of the vanishing of $\II$ restricted to the holomorphic tangent space $T^{1,0}M \oplus T^{0,1}M$, or the vanishing of its traceless component~$\II^{\circ}$ ($\II^{\circ}$ vanishes iff $\II(Z,W) = 0$ for all $Z,W\in T^{1,0}M$). It turns out that the trace of $\II$ (with respect to the Levi metric) never vanishes and therefore we ask what if $\II^{\circ} = 0$? If $\II^{\circ}$ vanishes at $p$, then we say, in analogy with classical surface theory, that $p$ is a \textit{pseudohermitian umbilical} point of~$F$. If $F$ is pseudohermitian umbilical at every points, we say that $F$ is a \textit{totally pseudohermitian umbilic} immersion. \cref{thm:2} below settles the question about totally umbilicity in the case where the ambient space is the euclidean space.

\begin{thm}\label{thm:2} Let $F \colon (M^{2n+1},\theta) \hookrightarrow (\mathbb{C}^N,\omega)$, $\omega:=i\partial\bar{\partial}\|Z\|^2$, be a semi-isometric CR immersion into a complex euclidean space. Suppose $(M,\theta)$ is complete and $\II (Z,W) = 0$ for any $(1,0)$-vectors tangent to $M$, then $(M,\theta)$ is globally CR equivalent to a sphere $\mathbb{S}^{2n+1} \subset \mathbb{C}^{n+1}$ and there exists a CR diffeomorphism $\varphi \colon \mathbb{S}^{2n+1} \to M^{2n+1}$ such that $F \circ \varphi$ extends to a linear mapping between complex spaces.
\end{thm}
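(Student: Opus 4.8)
The plan is to convert the umbilicality assumption into PDE for the components of $F=(F^{1},\dots,F^{N})$, combine these with the semi-isometry relations to recognise the pseudohermitian structure of $(M,\theta)$, and then build the sphere and the linear map by hand.

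\emph{Step 1 (structure equations).} Fix a local admissible coframe $(\theta,\theta^{\alpha})$ with $d\theta=ih_{\alpha\bba}\theta^{\alpha}\wedge\theta^{\bba}$ and let $T$ be the Reeb field. Since $F$ is CR, $dF^{I}=F^{I}_{0}\theta+F^{I}_{\alpha}\theta^{\alpha}$ (no $\theta^{\bba}$-term); since the Chern connection of $(\CC^{N},\omega)$ is flat one gets $\II(Z_{\alpha},Z_{\beta})=(\nabla_{\alpha}F^{I}_{\beta})\,\partial_{Z^{I}}$, so the hypothesis reads $F^{I}_{\alpha\beta}=0$ for all $I,\alpha,\beta$. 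Expanding $d\theta=F^{*}\omega=i\sum_{I}dF^{I}\wedge d\overline{F^{I}}$ and separating bidegrees yields $\sum_{I}F^{I}_{\alpha}\overline{F^{I}_{\beta}}=h_{\alpha\bba}$ and $\sum_{I}F^{I}_{0}\overline{F^{I}_{\beta}}=0$. Differentiating these and using $F^{I}_{\alpha\beta}=0$ and the CR-condition gives $F^{I}_{\alpha\bba}=ih_{\alpha\bba}F^{I}_{0}$, $F^{I}_{0\bba}=A_{\bba}{}^{\gamma}F^{I}_{\gamma}$ ($A$ the pseudohermitian torsion), $\mean^{I}=iF^{I}_{0}$, $\sum_{I}(\nabla_{\alpha}F^{I}_{0})\overline{F^{I}_{\beta}}=i|\mean|^{2}h_{\alpha\bba}$, and $\Box_{b}\overline{F^{I}}=-in\,\overline{F^{I}_{0}}$. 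Finally, $F$ being an immersion and $T\neq0$ force $dF(T)\neq0$, hence $|\mean|^{2}=\sum_{I}|F^{I}_{0}|^{2}>0$ on $M$.

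\emph{Step 2 (rigidity — the heart).} One now shows: (a) $|\mean|^{2}$ is a positive constant; (b) $A\equiv0$; (c) $dF^{I}_{0}=i|\mean|^{2}\,dF^{I}$, i.e. $\nabla_{\alpha}F^{I}_{0}=i|\mean|^{2}F^{I}_{\alpha}$ and $\nabla_{0}F^{I}_{0}=i|\mean|^{2}F^{I}_{0}$. The route that works for all $n$ and is indispensable when $n=1$ uses \cref{thm:1}: by completeness together with the positivity of the mean curvature (the Gauß equation and $\II^{\circ}=0$ bound the Webster curvature of $M$ from below), $M$ is compact; tracking the proof of \cref{thm:1}, the defect in the Reilly-type inequality is controlled by the traceless part $\II^{\circ}$, so $\II^{\circ}=0$ forces the equality $\lambda_{1}=\tfrac{n}{\vol(M)}\int_{M}|\mean|^{2}\,d\vol_{\theta}$; by the equality clause each $\Box_{b}\overline{F^{I}}=-in\,\overline{F^{I}_{0}}$ is constant or a $\lambda_{1}$-eigenfunction, whence $\Box_{b}\overline{F^{I}_{0}}=\lambda_{1}\overline{F^{I}_{0}}$, and expanding the left side with the Step-1 identities and the Tanaka–Webster commutation formulas produces exactly the third-order relation that gives (a), (b), (c). (For $n\geq2$ one may alternatively argue locally: differentiating $F^{I}_{\alpha\beta}=0$ once more and commuting covariant derivatives shows, after pairing against the $F^{J}_{\sigma},\overline{F^{J}_{\sigma}},F^{J}_{0},\overline{F^{J}_{0}}$, that the Webster–Ricci curvature is a function multiple of the Levi metric and $A\equiv0$, and the contracted Bianchi identity forces that function — a multiple of $|\mean|^{2}$ — to be constant; this argument degenerates precisely at $n=1$.)

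\emph{Step 3 (the sphere).} Put $\xi_{\alpha}:=\sum_{I}F^{I}_{\alpha}\partial_{Z^{I}}$, $\xi_{0}:=\sum_{I}F^{I}_{0}\partial_{Z^{I}}$ and $c:=F+i|\mean|^{-2}\xi_{0}$. Using $dF=\theta\otimes\xi_{0}+\theta^{\alpha}\otimes\xi_{\alpha}$ and (c), every coframe component of $dc$ cancels, so $c$ is a constant vector; then $F-c=-i|\mean|^{-2}\xi_{0}$ and $\|F-c\|^{2}\equiv|\mean|^{-2}=:R^{2}>0$, so $F(M)$ lies on the round sphere of radius $R$ about $c$ in $\CC^{N}$. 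Moreover (b), (c) give $\widetilde{\nabla}\xi_{\alpha},\widetilde{\nabla}\xi_{0}\in\mathrm{span}_{\CC}\{\xi_{1},\dots,\xi_{n},\xi_{0}\}$, so this subbundle is parallel for the flat connection, hence (as $M$ is connected) a fixed subspace $E_{0}\subseteq\CC^{N}$ of complex dimension exactly $n+1$ ($\xi_{1},\dots,\xi_{n}$ are independent by $\sum_{I}F^{I}_{\alpha}\overline{F^{I}_{\beta}}=h_{\alpha\bba}$, and $0\neq\xi_{0}$ is orthogonal to them by $\sum_{I}F^{I}_{0}\overline{F^{I}_{\beta}}=0$); since $F-c$ is $E_{0}$-valued we reduce to $N=n+1$. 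Translating by $-c$ (an isometry of $(\CC^{n+1},\omega)$) gives a semi-isometric CR immersion $G:=F-c\colon M\to\CC^{n+1}$ with $G(M)$ in the sphere $\mathbb{S}$ of radius $R$ about the origin.

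\emph{Step 4 (globalisation).} As $\dim M=2n+1=\dim\mathbb{S}$, $G$ is a local diffeomorphism onto $\mathbb{S}$; being CR it carries the contact distribution to the contact distribution, and from $d(G^{*}\theta_{\mathbb{S}})=G^{*}\omega=d\theta$ one deduces $G^{*}\theta_{\mathbb{S}}=\theta$, so $G$ is a local pseudohermitian (hence Webster–Riemannian) isometry. Completeness of $(M,\theta)$ then makes $G$ a Riemannian covering of the compact simply connected $\mathbb{S}^{2n+1}$, so $G$ is a diffeomorphism; thus $(M,\theta)$ is globally CR equivalent to $\mathbb{S}$, and the dilation $Z\mapsto Z/R$ identifies $\mathbb{S}$ with the unit sphere $\mathbb{S}^{2n+1}\subset\CC^{n+1}$. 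Setting $\varphi:=(R^{-1}G)^{-1}\colon\mathbb{S}^{2n+1}\to M$ gives the CR diffeomorphism asserted, and $F\circ\varphi(w)=G(\varphi(w))+c=Rw+c$ is the restriction of an affine-linear map $\CC^{n+1}\to\CC^{N}$ (linear after the translation by $c$), which is the claimed linear extension. I expect the main difficulty to be Step 2 — particularly the three-dimensional case, where the local curvature identities are inconclusive and one must genuinely exploit the sharpness and equality case of the eigenvalue estimate — together with the somewhat delicate passage from completeness to compactness needed to invoke \cref{thm:1}.
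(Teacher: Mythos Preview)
Your Steps 1, 3, 4 are sound and Steps 3--4 in particular give an attractive direct construction of the sphere (finding the center $c$ by hand) that would replace the paper's appeal to the Li--Son--Wang characterization. However, Step~2 has a real gap. The claim that ``the defect in the Reilly-type inequality is controlled by $\II^{\circ}$, so $\II^{\circ}=0$ forces equality'' is not correct for the estimate of \cref{thm:1} as proved here: that inequality is obtained simply by applying the spectral bound $\lambda_1\|\bar\partial_b u\|^2\le\|\Box_b u\|^2$ to each $\overline{F^I}$ and summing, and its defect is $\sum_I\int(|\Box_b\overline{F^I}|^2-\lambda_1|\bar\partial_b\overline{F^I}|^2)$, which has no direct relation to $\II^{\circ}$. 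So you cannot conclude equality in \cref{thm:1} from the umbilicality hypothesis alone, and hence cannot feed the equality clause back in to obtain (a), (b), (c). There is also a circularity in the compactness step: Myers requires a \emph{uniform} positive lower bound on the Ricci curvature of the adapted metric, i.e.\ $|\mean|^2\ge c>0$, but at that point you only know $|\mean|^2>0$ pointwise; you need $|\mean|^2$ constant first.

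The paper closes exactly this gap by proving \cref{lem:sp} \emph{before} any eigenvalue argument: the Gau\ss\ equation \cref{e:gausstorsion} gives $A_{\alpha\beta}=0$ immediately from $\II^{\circ}=0$; for $n\ge2$ the Bianchi identity then forces $R$ (hence $|\mean|^2$) constant, while for $n=1$ the Codazzi--Mainardi equation (\cref{prop:codazzi-mainardi}) with $\II^{\circ}=0$ yields $D_ZH=0$, whence $|\mean|^2$ is constant by \cref{prop:constantmean}. Only then is compactness obtained (Myers, using $A=0$ and constant positive Ricci), and only then does the eigenvalue argument run---and it needs \emph{two} inputs, not one: the upper bound from \cref{thm:1} and the \emph{lower} bound $\lambda_1\ge n|\mean|^2$ of Chanillo--Chiu--Yang (valid here since $A=0$), which together pin down $\lambda_1=n|\mean|^2$. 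You never invoke a lower bound, which is why your route stalls. Note also that your Step~3 relies on the relation (c) in all directions, including $T$; the horizontal parts of (c) do follow from $D_ZH=0$ (so the Codazzi argument in fact hands you most of what you need), but the $T$-component requires a further computation that you have not supplied.
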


Notice that the conclusion of this theorem also says that $F$ maps $M^{2n+1}$ into a sphere. We shall say that $F$ ``\textit{realizes an immersion in a sphere}'' if there exists a CR immersion $\phi \colon M \to \mathbb{S}^{2N-1}$ such that $F = \iota \circ \phi$, where $\iota\colon \mathbb{S}^{2N-1} \hookrightarrow \mathbb{C}^N$ is the standard inclusion of the sphere. This terminology is borrowed from Takahashi \cite{takahashi1966minimal}. Let $\theta: = \phi^{\ast}\Theta$, where $\Theta: = \iota^{\ast}(i\bar{\partial} \|Z\|^2)$ is the standard pseudohermitian structure on the sphere. Then $F$ is a semi-isometric immersion from $(M,\theta)$ into $(\mathbb{C}^N, i\partial\bar{\partial} \|Z\|^2)$. We shall prove in \cref{prop:2sff} that the CR second fundamental form $\II_M^{CR}$ of $\phi$ and the traceless component $\II^{\circ}$ of $F$ are essential the same. Therefore, the linearity of $F$ in this particular case also follows from the result of Ji-Yuan \cite{ji2010flatness} who exploited an useful normalization technique of Huang~\cite{huang1999linearity}. We expect this normalization technique extends to the case of semi-isometric immersions without the assumption that they realize immersions in a sphere. However, we shall not go in this direction, but approach to the linearity in \cref{thm:2} along a different route. We first show that if the immersion is totally umbilic, then $(M,\theta)$ is ``extremal'' for the lower and upper estimates of the first positive eigenvalue $\lambda_1$ of the Kohn Laplacian of \cite{chanillo--chiu--yang} (cf. \cite{li--son--wang}) and \cite{li--son} (i.e., \cref{thm:1} above). This allows us to conclude that $(M,\theta)$ is globally CR equivalent to the sphere by applying the main result of \cite{li--son--wang}. We then exploit the fact that the first eigenfunctions of the Kohn Laplacian on the standard sphere are the restrictions of the homogeneous harmonic polynomials of bi-degree $(0,1)$ to deduce that $F$ becomes linear after being pre-composed with an automorphism of the source. This concludes the proof of \cref{thm:2}.

It is worth pointing out that \cref{thm:2} covers the \textit{three-dimensional} case. This interesting case is more difficult since the pseudoconformal Gauß equation of \cite{ebenfelt2004rigidity} does not give any useful information: The Chern-Moser tensor vanishes trivially in this dimension. Moreover, a certain Bianchi identity relating the covariant derivatives of the pseudohermitian Ricci and torsion tensors, which is useful in the higher dimensional case, is rendered trivial in three-dimension (see \cref{lem:sp}). We shall need pseudohermitian Gauß-Codazzi-Mainardi equations which relate the second fundamental form (resp. its covariant derivatives) and the tangential (resp. normal) component of the ambient curvature tensor. These equations allow us to deduce that the intrinsic scalar curvature $R$, which in our situation agrees with $2$ times the squared mean curvature function $|H|^2$, is constant. This is important for us to deduce that $(M,\theta)$ is extremal for the aforementioned eigenvalue bounds. We point out that in three-dimensional case, we can also deduce from this and the vanishing of the pseudohermitian torsion that $M$ is locally CR spherical by proving directly the vanishing of Cartan's 6th-order umbilical tensor. We therefore obtain the following
\begin{cor}\label{cor:3dim}
	Let $\phi \colon M^{2n+1} \to \mathbb{S}^{2N-1}$ be a smooth CR immersion, with $n\geq 1$ and $N\geq n+1$. If $\II_M^{CR} = 0$, then
	\begin{enumerate}[(i)]
		\item $M$ is locally CR spherical.
		\item For each $p\in M$, there exist a neighborhood $U$ of $p$ in $M$, an open set $V \subset \mathbb{S}^{2n+1}$, and a CR diffeomorphism $\gamma \colon V \to U$ such that $\phi \circ \gamma$ extends to a totally geodesic CR embedding of $\mathbb{S}^{2n+1}$ into $\mathbb{S}^{2N-1}$. 
	\end{enumerate}
\end{cor}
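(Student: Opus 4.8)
The plan is to establish the local CR sphericity in (i) by a case split on the dimension — using the pseudoconformal Gauß equation when $n\ge 2$ and Cartan's sixth-order umbilical tensor when $n=1$ — and then to reduce the linearity in (ii) to \cref{thm:2}. Throughout, set $\theta:=\phi^{\ast}\Theta$, where $\Theta$ is the standard pseudohermitian structure on $\mathbb{S}^{2N-1}$, so that $F:=\iota\circ\phi\colon(M,\theta)\to(\mathbb{C}^N,\omega)$ is a semi-isometric CR immersion. By \cref{prop:2sff}, the hypothesis $\II_M^{CR}=0$ is equivalent to the vanishing of the traceless pseudohermitian second fundamental form $\II^{\circ}$ of $F$, equivalently to $\II(Z,W)=0$ for all $(1,0)$-vectors $Z,W$ tangent to $M$. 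All the assertions being local, fix $p\in M$ and work on a small neighborhood.

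For (i) with $n\ge 2$, I would invoke the pseudoconformal Gauß equation of Ebenfelt \cite{ebenfelt2004rigidity}: it expresses the Chern-Moser curvature tensor of $(M,\theta)$ as the tangential restriction of that of the ambient sphere — which vanishes identically, since $\mathbb{S}^{2N-1}$ is CR flat — plus a term that is quadratic in $\II_M^{CR}$. Hence $\II_M^{CR}=0$ forces the Chern-Moser tensor of $M$ to vanish, and $M$ is locally CR spherical. For $n=1$ this is vacuous, the Chern-Moser tensor being identically zero in dimension three, so one must instead show that Cartan's sixth-order umbilical tensor $Q$ vanishes. Here I would use the pseudohermitian Gauß-Codazzi-Mainardi equations: since the standard structure on the sphere has vanishing torsion and vanishing curvature, these equations force the induced pseudohermitian torsion of $(M,\theta)$ to vanish and, through the Gauß equation, identify the Webster scalar curvature $R$ with $2|H|^2$, while a Codazzi-Mainardi identity then shows $R$ is locally constant. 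Since $Q$ is a universal polynomial in the iterated Tanaka-Webster covariant derivatives of the torsion and of $R$, the conditions that the torsion vanish and that $dR\equiv 0$ give $Q\equiv 0$, so $M$ is again locally CR spherical.

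For (ii), using (i) choose a CR diffeomorphism $\gamma\colon V\to U$ with $V\subset\mathbb{S}^{2n+1}$ open and connected and $p\in U$, shrinking $V$ so that $\psi:=\phi\circ\gamma$ is a CR embedding; then $\II^{CR}_{\psi}=\gamma^{\ast}\II_M^{CR}=0$. A smooth CR map of an open piece of $\mathbb{S}^{2n+1}$ into $\mathbb{S}^{2N-1}$ is real-analytic and, by the reflection principle together with the rationality theory for proper holomorphic maps between balls, continues holomorphically past the sphere; its boundary restriction is a CR map $\tilde\psi\colon\mathbb{S}^{2n+1}\to\mathbb{S}^{2N-1}$ with $\tilde\psi|_V=\psi$, which one checks is again a semi-isometric CR immersion and which, by analytic continuation, satisfies $\II^{CR}_{\tilde\psi}\equiv 0$. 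Since $(\mathbb{S}^{2n+1},\tilde\psi^{\ast}\Theta)$ is compact, hence complete, \cref{thm:2} provides a CR automorphism $\varphi$ of $\mathbb{S}^{2n+1}$ such that $\tilde\psi\circ\varphi$ extends to a linear map $\mathbb{C}^{n+1}\to\mathbb{C}^N$; mapping $\mathbb{S}^{2n+1}$ into $\mathbb{S}^{2N-1}$, this linear map is necessarily an isometric embedding, hence totally geodesic. As the CR second fundamental form is tensorial in the source, $\tilde\psi=(\tilde\psi\circ\varphi)\circ\varphi^{-1}$ also has $\II^{CR}\equiv 0$, so $\tilde\psi$ is a totally geodesic CR embedding of $\mathbb{S}^{2n+1}$ into $\mathbb{S}^{2N-1}$ extending $\phi\circ\gamma$. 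This proves (ii).

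The step I expect to be the main obstacle is the three-dimensional case of (i): extracting from the pseudohermitian Gauß-Codazzi-Mainardi system both the vanishing of the torsion and the constancy of $R$. This is delicate because, as \cref{lem:sp} records, the Bianchi-type identity relating the covariant derivatives of the pseudohermitian Ricci and torsion tensors — the very mechanism that drives the higher-dimensional argument — degenerates in dimension three, so the constancy of $R$ has to be obtained from a different combination of the structure equations; and one must then still carry out the lengthy but mechanical verification that vanishing torsion together with constant $R$ annihilates Cartan's sixth-order tensor. By contrast, the $n\ge 2$ Gauß equation and the extension and rigidity argument in (ii) are routine once \cref{thm:2} and the standard regularity theory for CR maps of spheres are available.
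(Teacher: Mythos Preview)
Your proposal is correct and follows essentially the same route as the paper: part~(i) is exactly the content of \cref{lem:sp}, whose proof you have accurately sketched (including the three-dimensional argument via the Codazzi--Mainardi equation, \cref{prop:constantmean}, and the vanishing of Cartan's tensor), and for part~(ii) the paper likewise extends the local sphere map to a global rational CR immersion via Forstneri\v{c}'s theorem, observes that $\II^{CR}\equiv 0$ persists by rationality, and then applies \cref{thm:2}. The obstacle you flagged in the final paragraph is precisely the one \cref{lem:sp} is designed to overcome, and your outlined resolution matches the paper's.
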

Here a CR immersion between spheres is totally geodesic iff it is spherical equivalent to the linear embedding. As already mentioned above, the case $\dim_{\mathbb{R}}M \geq 5$ is well-known and due to Ebenfelt-Huang-Zaitsev \cite{ebenfelt2004rigidity} and Ji-Yuan \cite{ji2010flatness} for Parts (i) and (ii), respectively.

Using an argument based on Huang's lemma (Lemma~3.2 in \cite{huang1999linearity}), as was done in Proposition~5.2 of \cite{ebenfelt2004rigidity}, we obtain from \cref{thm:2} the following generalization of the ``first gap'' theorem by Webster \cite{webster1979rigidity}, Cima-Suffridge \cite{cima1983reflection}, Faran \cite{faran1986linearity}, Huang \cite{huang1999linearity} which treat the case when $F$ is assumed to realize an immersion in a sphere.
\begin{thm}\label{cor:linearity}
	Let $F \colon (M^{2n+1},\theta) \hookrightarrow (\mathbb{C}^N,\omega)$, $\omega:=i\partial\bar{\partial}\|Z\|^2$, be a semi-isometric CR immersion into a complex euclidean space, $n\geq 2$. Suppose $(M,\theta)$ is complete, $N\leq 2n$, and $M$ is locally CR spherical, then $(M,\theta)$ is globally CR equivalent to the sphere and there exists a CR diffeomorphism $\varphi \colon \mathbb{S}^{2n+1} \to M^{2n+1}$ such that $F \circ \varphi$ extends to a linear mapping between complex spaces. In particular, $F$ realizes an immersion into a sphere in $\mathbb{C}^{N}$.
\end{thm}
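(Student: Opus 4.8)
The plan is to deduce from the hypotheses that the traceless pseudohermitian second fundamental form $\II^{\circ}$ vanishes identically on $M$, and then to invoke \cref{thm:2}. Thus the proof rests on two ingredients: a \emph{pseudohermitian Gauß equation} expressing the Chern–Moser tensor of $M$ through the components of $\II^{\circ}$ — available in a clean form because the ambient $(\mathbb{C}^N,\omega)$ is flat Kähler, so that all ambient curvature contributions drop out — and a pointwise application of Huang's algebraic lemma (Lemma~3.2 in \cite{huang1999linearity}), used exactly as in Proposition~5.2 of \cite{ebenfelt2004rigidity}. Granting $\II^{\circ}\equiv 0$, i.e. $\II(Z,W)=0$ for all $(1,0)$-vectors $Z,W$ tangent to $M$, \cref{thm:2} applies word for word: since $(M,\theta)$ is complete, it yields the global CR equivalence with $\mathbb{S}^{2n+1}$ and a CR diffeomorphism $\varphi$ for which $F\circ\varphi$ extends to a linear map $\mathbb{C}^{n+1}\to\mathbb{C}^N$; the final clause ``$F$ realizes an immersion into a sphere'' is then the same observation recorded right after \cref{thm:2}, since a linear semi-isometric immersion of $\mathbb{S}^{2n+1}$ has a sphere of $\mathbb{C}^N$ as image.

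First I would isolate the relevant components of the second fundamental form. Along $M$, the restriction of $\II$ to $T^{1,0}M\times T^{1,0}M$ is a symmetric bilinear form whose ``trace'' contribution is carried by the mean curvature $\mean$ and whose remainder $\II^{\circ}$ takes values in the rank-$(N-n-1)$ holomorphic normal bundle of the semi-isometric immersion (compare \cref{prop:2sff}, whose computation is local and applies here in the same way). Choosing a local unitary frame of that bundle, write $E^{\mu}_{\alpha\beta}$, $\mu=1,\dots,N-n-1$, for the symmetric components of $\II^{\circ}$. Then the pseudohermitian Gauß equation, together with the vanishing of the Chern–Moser curvature of $\mathbb{C}^N$ and the hypothesis that $M$ is locally CR spherical — which, since $n\geq 2$, forces the Chern–Moser tensor $S_{\alpha\bba\gamma\sba}$ of $M$ to vanish — collapses at every point of $M$ to the purely algebraic identity
\begin{equation*}
\sum_{\mu}\Bigl( E^{\mu}_{\alpha\gamma}\,\overline{E^{\mu}_{\beta\sigma}} + E^{\mu}_{\alpha\sigma}\,\overline{E^{\mu}_{\beta\gamma}} \Bigr)\ =\ \bigl(\text{trace terms in the Levi metric}\bigr),
\end{equation*}
that is, the traceless symmetrization of the Hermitian Gram tensor of $\II^{\circ}$ is identically zero.

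Next I would apply Huang's lemma pointwise. Fix $p\in M$ and regard the $E^{\mu}|_{p}$ as symmetric bilinear forms on $T^{1,0}_{p}M\cong\mathbb{C}^{n}$; there are $N-n-1$ of them, and the hypothesis $N\leq 2n$ gives $N-n-1\leq n-1$. Huang's Lemma~3.2 then applies: it guarantees that such a collection of at most $n-1$ symmetric bilinear forms on $\mathbb{C}^{n}$, constrained by a vanishing traceless symmetrized Gram identity, must be identically zero; hence $E^{\mu}|_{p}=0$ for every $\mu$. Since $p$ was arbitrary, $\II^{\circ}\equiv 0$ on $M$, and the reduction of the first paragraph completes the argument.

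The hard part — and the only genuine obstacle — is the second step: one must verify that CR sphericity of $M$, propagated through the pseudohermitian Gauß equation in the flat-ambient case, degenerates \emph{exactly} to the homogeneous quadratic relation above, with no residual pseudohermitian torsion or covariant-derivative terms surviving (this is precisely what makes the constraint algebraic and lets Huang's lemma bite), and that the bundle in which $\II^{\circ}$ takes values has the correct rank $N-n-1$, so that the dimension count is the sharp one. Both ``sphericity $\Rightarrow S=0$'' being non-vacuous and Huang's lemma having content require $n\geq 2$, which is why the theorem excludes the three-dimensional case.
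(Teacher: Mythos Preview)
Your approach is essentially the paper's own: the proof there is a one-line reduction to \cref{thm:2} via \cref{prop:2um}(ii), which packages exactly the Gau\ss{} equation plus Huang's lemma argument you outline. One small correction: for a semi-isometric immersion into $\mathbb{C}^N$, the holomorphic part $\II|_{T^{1,0}M\times T^{1,0}M}$ takes values in the full normal bundle $N^{1,0}M$ of rank $N-n$, not $N-n-1$ (your appeal to \cref{prop:2sff} is misplaced, as that proposition is specific to immersions that factor through a sphere); the paper's proof of \cref{prop:2um}(ii) works directly with the $N-n$ components $\omega^a_{\alpha\gamma}$ and invokes Lemma~5.3 of \cite{ebenfelt2004rigidity} under the hypothesis $N\leq 2n$, so the dimension count still closes.
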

It is worth pointing out that the conclusions in 
\cref{thm:2,cor:linearity} are global. Although, we do not assume any topological assumption on~$M$, we do assume that the immersion is globally defined. On the other hand, a local version of \cref{cor:linearity} for $F$ realizing an immersion in a sphere can be obtained by using the well-known fact that local (rational) holomorphic maps between connected pieces of spheres extend to global maps with poles off the source sphere. 

It is not unexpected that the codimension restriction in \cref{cor:linearity} is sharp. In fact, an well-known example in the case of maps into a sphere for the case $N=2n+1$ also serves as a counterexample for our more general situation. Precisely, the complex Whitney map $\mathcal{W}$ from $\mathbb{S}^{2n+1}$ to $\mathbb{S}^{4n+1}$ induces a semi-isometric immersion from $(\mathbb{S}^{2n+1}, \mathcal{W}^{\ast} (i\bar{\partial}\|Z\|^2))$ into $\mathbb{C}^{2n+1}$, but $\theta: = \mathcal{W}^{\ast} (i\bar{\partial}\|Z\|^2)$ is \textit{not} homothetic to the standard pseudohermitian structure on the source sphere; see \cref{ex:whitney} for more details.

Another interesting question, going back to the seminal paper of Chern and Moser \cite{chern1974real}, that we are able to tackle with our current techniques is the existence of the CR umbilical points on (Levi-nondegenerate) CR manifolds. This problem has been studied by Webster \cite{webster2000holomorphic} for the case $n\geq 2$ and by, for example, Huang-Ji \cite{huang2007every}, Ebenfelt et al. \cite{ebenfelt2017umbilical,ebenfelt2018family} for the case $n=1$. Recall that if $n\geq 2$ (i.e., $\dim_{\mathbb{R}}M \geq 5$), then $p\in M$ is a CR umbilical point iff the Chern-Moser tensor of $M$ vanishes at $p$ \cite{chern1974real}. This notion of (intrinsic) CR umbilical points and that of (extrinsic) pseudohermitian umbilical points of an immersion into a complex euclidean space are closely related. This close relation was already noticed and exploited in \S 5 of \cite{ebenfelt2004rigidity} for the case when $F$ realizes an immersion in a sphere. In fact, these two properties are equivalent for the immersions having ``low'' codimension (\cref{prop:2um}). This equivalence allows us to locate the CR umbilical points on a strictly pseudoconvex CR manifold $M$ when it admits a pseudohermitian structure $\theta$ for which $(M,\theta)$ is semi-isometrically immersed into a complex euclidean space of low codimension. Precisely, assume that $F=(F_1,\dots,F_N)$ is a holomorphic map from an open set in $\mathbb{C}^{n+1}$ into $\mathbb{C}^N$ and $\rho = \|F\|^2 + \psi$, where $\|F\|^2:=\sum_{d=1}^N |F^d|^2$ is the ``squared norm'' of~$F$ and $\psi$ is pluriharmonic. If $M:=\{\rho = 0\}$ is a strictly pseudoconvex real hypersurface and $\theta: = i\bar{\partial}\rho$, then $(M,\theta)$ is semi-isometrically immersed into $\mathbb{C}^N$ by~$F$. The next result stated in this introduction is a criterion for the CR umbilicity which is formulated as a property of the (Levi-) Fefferman determinant~$J(\rho)$. Recall that $J(\rho)$ is defined by
\begin{equation}\label{e:fm}
	J(\rho) = - \det \begin{bmatrix}
	\rho & \rho_{\kba} \\
	\rho_{j} & \rho_{j\kba}
\end{bmatrix},
\end{equation}
where $\rho_j = \partial\rho/\partial z^j$ and $\rho_{j\kba} = \partial^2\rho/\partial z^j \bar{z}^k$.
\begin{thm}\label{thm:umbilichypersurface} Let $\iota \colon M^{2n+1}\subset \mathbb{C}^{n+1}$ be a strictly pseudoconvex real hypersurface defined by $\rho = 0$ with $d\rho \ne 0$ and $J(\rho) > 0$ along $M$, $n\geq 2$. Suppose that $\rho = \|F\|^2 + \psi$ where $F$ is a holomorphic map into $\mathbb{C}^N$ and $\psi$ is pluriharmonic. Then,
\begin{equation}\label{e:umbilic}
	\iota^{\ast} (i\partial\bar{\partial} \log J(\rho))|_{H(M)} \geq 0.
\end{equation}
If the equality occurs at $p\in M$, then $p$ is a CR umbilical point of $M$. If in addition $N \leq 2n$, then the equality occurs if $p$ is CR umbilical. In particular, if the complex hessian of $\log J(\rho)$ has at least two nonzero eigenvalues at every points and $N\leq 2n$, then $M$ admits no CR umbilical points.
\end{thm}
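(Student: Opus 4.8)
The plan is to reduce everything to a single pointwise identity rewriting the left‑hand side of \eqref{e:umbilic} in terms of the traceless pseudohermitian second fundamental form $\II^{\circ}$ of the immersion induced by $F$, and then to extract all four assertions from it together with the Gauß–Codazzi machinery of this paper and \cref{prop:2um,prop:2sff}. To set up: since $\rho=\|F\|^2+\psi$ with $F$ holomorphic and $\psi$ pluriharmonic, $i\partial\bar\partial\rho=F^{\ast}\omega$ with $\omega=i\partial\bar\partial\|Z\|^2$, so $F$ restricts to a semi-isometric CR immersion $F\colon (M,\theta)\hookrightarrow (\CC^N,\omega)$ with $\theta=\iota^{\ast}(i\bar\partial\rho)$ and flat ambient Chern connection. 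Writing $g_{j\kba}:=\rho_{j\kba}=\sum_d\partial_jF^d\,\overline{\partial_kF^d}$ and expanding the bordered determinant \eqref{e:fm} by Schur complements gives, wherever $\det g\ne0$,
\begin{equation}\label{e:jrsplit}
J(\rho)=\bigl(g^{j\kba}\rho_j\rho_{\kba}-\rho\bigr)\det g .
\end{equation}
By Cauchy–Binet, $\det g$ is a sum of squared moduli of holomorphic functions (the maximal minors of the holomorphic Jacobian of $F$); hence $\log\det g$ is plurisubharmonic and $i\partial\bar\partial\log\det g$ is the pullback of a Fubini–Study form, in particular $\ge0$ wherever defined. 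The identity I would aim for, as a Hermitian form on $T^{1,0}M$ in a local orthonormal frame $\{Z_\alpha\}$, is
\begin{equation}\label{e:keyid}
\iota^{\ast}\!\bigl(i\partial\bar\partial\log J(\rho)\bigr)(Z_{\alpha},Z_{\bba})
=c_n\sum_{\gamma=1}^{n}\bigl\langle\II^{\circ}(Z_{\alpha},Z_{\gamma}),\II^{\circ}(Z_{\beta},Z_{\gamma})\bigr\rangle ,
\qquad c_n>0,
\end{equation}
where $\langle\cdot,\cdot\rangle$ is the Hermitian product of $\CC^N$ (conjugate‑linear in the second argument). The right‑hand side is a Gram matrix, hence positive semidefinite, and vanishes exactly when $\II^{\circ}=0$ at the point in question.

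To prove \eqref{e:keyid} I would differentiate $\log J(\rho)=\log\!\bigl(g^{j\kba}\rho_j\rho_{\kba}-\rho\bigr)+\log\det g$ twice in the $T^{1,0}M\oplus T^{0,1}M$ directions. Two ingredients feed in. First, along $M$ the quantity $g^{j\kba}\rho_j\rho_{\kba}=|\partial\rho|^2_g$ is, after the appropriate normalization, Graham's transverse curvature, which by \cref{sec:transverse} coincides with $|\mean|^2$; the transverse‑curvature computation also supplies the transversal $2$‑jet of $|\partial\rho|^2_g-\rho$ along $M$ in terms of $\mean$ and its covariant derivatives. Second, the pseudohermitian Gauß–Codazzi–Mainardi equations, with vanishing ambient curvature, express the Tanaka–Webster curvature of $(M,\theta)$ and the tangential–normal part of the ambient Hessian in terms of $\II$. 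Combining these, the $\log\det g$–part of the Hessian of $\log J(\rho)$ contributes the negative of the Tanaka–Webster Ricci form together with a quadratic term in $\II$, while the $\log(|\partial\rho|^2_g-\rho)$–part contributes the Tanaka–Webster Ricci form together with precisely the trace (mean‑curvature) corrections that turn $\II$ into $\II^{\circ}$; the Ricci terms cancel and \eqref{e:keyid} remains. Making these cancellations explicit — tracking exactly which curvature, torsion and mean‑curvature terms occur and checking that everything but the $\II^{\circ}$–Gram term cancels — is the one genuinely laborious step, and the main obstacle.

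Granting \eqref{e:keyid}, the first two assertions are immediate: the right‑hand side is $\ge0$, which is \eqref{e:umbilic}; and equality at $p$ forces the Gram matrix to vanish there, i.e. $\II^{\circ}(p)=0$, so $p$ is a pseudohermitian umbilical point of $F$. Because the ambient space is flat, the pseudoconformal Gauß equation of Ebenfelt–Huang–Zaitsev (\cite{ebenfelt2004rigidity}; cf. \cref{prop:2sff}) expresses the Chern–Moser tensor of $M$ at $p$ as a universal quadratic expression in $\II^{\circ}(p)$, so $\II^{\circ}(p)=0$ makes the Chern–Moser tensor vanish at $p$; since $n\ge2$, this is exactly CR umbilicity at $p$. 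For the converse when $N\le 2n$: by \cref{prop:2um}, CR umbilicity at $p$ is equivalent to pseudohermitian umbilicity there, i.e. $\II^{\circ}(p)=0$, whence \eqref{e:keyid} shows the left‑hand side of \eqref{e:umbilic} vanishes at $p$.

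Finally, suppose $N\le2n$ and that the complex Hessian of $\log J(\rho)$ has at least two nonzero eigenvalues at every point; let $p\in M$. If $p$ were CR umbilical, then by the previous paragraph equality holds in \eqref{e:umbilic} at $p$, so $\iota^{\ast}\!\bigl(i\partial\bar\partial\log J(\rho)\bigr)$ vanishes on the complex hyperplane $H_pM\subset\CC^{n+1}$. A linear‑algebra argument using \eqref{e:jrsplit} — the global plurisubharmonicity of $\log\det g$, so that $i\partial\bar\partial\log\det g\ge0$ on $\CC^{n+1}$, together with the behaviour of the complementary term $i\partial\bar\partial\log(|\partial\rho|^2_g-\rho)$ at a pseudohermitian umbilical point (where the Codazzi identity controls its rank) — then forces the complex Hessian of $\log J(\rho)$ to have rank at most one at $p$, contradicting the standing hypothesis. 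Hence $M$ admits no CR umbilical point.
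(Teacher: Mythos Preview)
Your target identity \eqref{e:keyid} is exactly the paper's key equation (with $c_n=1$): in the paper's notation,
\[
\iota^{\ast}(i\partial\bar\partial\log J(\rho))|_{H(M)}
= h^{\alpha\bba}\omega_{\alpha\gamma}^{a}\,\omega_{\bba\sba}^{\bar b}\,h_{a\bar b}\,\theta^{\gamma}\wedge\theta^{\sba}\big|_{H(M)},
\]
and once this is in hand your deductions of the four assertions via \cref{prop:2um} are the same as the paper's. The difference is in how the identity is reached. The paper does not use the Schur-complement split \eqref{e:jrsplit} at all; it simply combines the Li--Luk formula (stated as a proposition in \cref{sec:transverse}),
\[
\Ric = (n+1)\,r(\rho)\,(\iota^{\ast}\omega)|_{H(M)} - \iota^{\ast}(i\partial\bar\partial\log J(\rho))|_{H(M)},
\]
with $r(\rho)=|\mean|^2$ (\cref{prop:transmean}) and the trace of the Gau\ss{} equation \eqref{e:gauss}, and the identity drops out in one line. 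Your proposed route---differentiating $\log(|\partial\rho|^2_g-\rho)+\log\det g$ and chasing cancellations---would amount to re-deriving Li--Luk from scratch. Moreover, your description of which piece contributes what is not quite right: $-i\partial\bar\partial\log\det g$ is the Ricci form of the \emph{ambient} K\"ahler metric $g=F^{\ast}\omega$ on an open set of $\mathbb{C}^{n+1}$, not the Tanaka--Webster Ricci form of $M$; relating the two is itself a Gau\ss-type identity (now for the inclusion $\iota$ into $(\mathbb{C}^{n+1},g)$ rather than for $F$ into flat $\mathbb{C}^N$), so the ``Ricci terms cancel'' claim hides a second Gau\ss{} equation that you have not written down. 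The laborious step you flag is therefore entirely avoidable by quoting Li--Luk.

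Two minor points. The reference to \cref{prop:2sff} for the pseudoconformal Gau\ss{} equation is misplaced; that proposition concerns immersions into spheres, whereas what you need (and what the paper uses) is part (i) of \cref{prop:2um}. And for the final ``in particular'' clause, the paper does not invoke any Codazzi-controlled rank argument on the Schur pieces: it treats the statement as an immediate consequence of the equivalence, for $N\le 2n$, between CR umbilicity at $p$ and the vanishing of $\iota^{\ast}(i\partial\bar\partial\log J(\rho))|_{H_pM}$, together with the elementary observation that a Hermitian form on $\mathbb{C}^{n+1}$ with at least two nonzero eigenvalues cannot restrict to zero on a complex hyperplane. Your extra machinery there is unnecessary.
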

Perhaps the most interesting nontrivial example for which \cref{thm:umbilichypersurface} applies is that of real ellipsoids. This example was treated in \cite{webster2000holomorphic} which studies the complete integrability of the Reeb flow associated to the ``normalized'' contact form (the one for which the Chern-Moser tensor has unit norm). Precisely, let $A = (A_1,A_2,\dots , A_{n+1})$ be a set of real numbers. The real ellipsoid $E(A)$ is the strictly pseudoconvex real hypersurface defined by $\rho = 0$, where
\begin{equation}\label{e:elipdef}
	\rho: = \|Z\|^2 + \Re \sum_{j=1}^{n+1} A_j z_j^2 - 1.
\end{equation}
We obtain the following corollary which was first proved in Theorem~0.1 of Webster \cite{webster2000holomorphic} (the original statement in \cite{webster2000holomorphic} is for ``generic'' ellipsoids that satisfy $0 < A_1 < A_2 < \cdots < A_{n+1} < 1$).
\begin{cor}[Webster \cite{webster2000holomorphic}]\label{cor:webster}
	A real ellipsoid $E(A)$ in $\mathbb{C}^N$, with $N\geq 3$, admits no CR umbilical points, provided that there are at least 2 nonzero components in $A$.
\end{cor}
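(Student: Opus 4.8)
The plan is to realize $E(A)$ as a pseudohermitian submanifold of complex euclidean space to which \cref{thm:umbilichypersurface} applies, to compute the Fefferman determinant of the obvious defining function and the complex Hessian of its logarithm explicitly, and then to invoke the ``in particular'' clause of that theorem. Write the ambient space as $\mathbb{C}^{n+1}$ (so $n+1$ is the ambient dimension and $n\ge 2$), and set $\rho := \|Z\|^2 + \Re\sum_{j=1}^{n+1} A_j z_j^2 - 1$. The first step is to observe that $\rho = \|F\|^2 + \psi$ with $F = \mathrm{id}_{\mathbb{C}^{n+1}}$ and $\psi = \Re\sum_j A_j z_j^2 - 1$, which is pluriharmonic because each $z_j^2$ is holomorphic; since $\rho_{j\kba} = \delta_{jk}$ the Levi form is positive, $E(A)$ is strictly pseudoconvex, and the codimension hypothesis of \cref{thm:umbilichypersurface} (here $n+1 \le 2n$) holds.

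Next I would compute the Fefferman determinant. Because $[\rho_{j\kba}]$ is the identity, expanding the block determinant in \cref{e:fm} by the Schur complement gives $J(\rho) = \sum_j |\rho_j|^2 - \rho$ with $\rho_j = \zba_j + A_j z_j$; substituting $\rho$ and simplifying yields
\begin{equation*}
 J(\rho) = 1 + \sum_{j=1}^{n+1}\bigl(A_j^2 |z_j|^2 + \Re(A_j z_j^2)\bigr),
\end{equation*}
so that $J(\rho) = \sum_j |\rho_j|^2 > 0$ along $M$ (the $\rho_j$ cannot vanish simultaneously on $M$, else $Z=0\notin M$), and \cref{thm:umbilichypersurface} applies on all of $E(A)$.

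The heart of the matter is the complex Hessian of $\log J(\rho)$. From the displayed formula, $J_j := \partial_{z^j} J = A_j^2\zba_j + A_j z_j = A_j\overline{\rho_j}$ and $J_{j\kba} = A_j^2\delta_{jk}$, hence
\begin{equation*}
 (\log J)_{j\kba} = \frac{A_j^2\delta_{jk}}{J} - \frac{J_j J_{\kba}}{J^2} = \frac{A_j A_k}{J}\Bigl(\delta_{jk} - \frac{\overline{\rho_j}\rho_k}{J}\Bigr),
\end{equation*}
so the Hermitian matrix $[(\log J)_{j\kba}]$ equals $\tfrac1J D_A\bigl(I - \tfrac1J v v^{\ast}\bigr) D_A$, where $D_A = \mathrm{diag}(A_1,\dots,A_{n+1})$ and $v = (\overline{\rho_j})_j$; along $M$ one has $\|v\|^2 = \sum_j |\rho_j|^2 = J$, so $I - \tfrac1J vv^{\ast}$ is the orthogonal projection onto $v^{\perp}$. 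I would then show that $[(\log J)_{j\kba}]$ has at least two nonzero eigenvalues at every point of $M$: its positive part $\tfrac1J D_A^2$ has rank equal to the number of nonzero components of $A$, and the single rank-one correction $\tfrac1{J^2} J_{\bullet} J_{\bullet}^{\ast}$ lowers the rank by at most one. Feeding this into the ``in particular'' clause of \cref{thm:umbilichypersurface} (legitimate since the codimension is $\le 2n$) gives that $E(A)$ has no CR umbilical point, which is the corollary.

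The main obstacle I anticipate is exactly this eigenvalue count, and it is genuinely delicate when the nonzero $A_j$'s are few. On $M$ the correction $J_{\bullet} J_{\bullet}^{\ast}/J^2$ is not a harmless perturbation of a full-rank matrix: the normalization $J = \sum_j |\rho_j|^2$ forces $I - \tfrac1J vv^{\ast}$ to be singular, and when some $A_j$ vanish the interaction between $\ker D_A$ and the vector $v = (\overline{\rho_j})_j$ can drop the rank by a further unit on the coordinate locus $\{z_j = 0 : A_j = 0\}\cap M$. One must therefore treat this locus by hand --- restricting the one-dimensional positive part of the Hessian to the holomorphic tangent space $H_p M$ rather than counting eigenvalues on all of $\mathbb{C}^{n+1}$ --- and verify directly that the restricted form is still nonzero there; this is precisely where the hypothesis on the nonzero components of $A$ does its work. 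With the locus dealt with, the rest reduces to the determinant computations indicated above.
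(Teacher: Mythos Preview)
Your approach is exactly the paper's: write the Hessian as $\tfrac1J D_A\bigl(I-\tfrac1J vv^{\ast}\bigr)D_A$ (the paper records this as $|\partial\rho|^2 A_jA_k\delta_{jk}-A_jA_k\rho_{\bar j}\rho_k$, which is $J^2$ times your expression on $M$) and feed it into the ``in particular'' clause of \cref{thm:umbilichypersurface}. Your computations of $\rho_j$, $J(\rho)$, and $(\log J)_{j\bar k}$ are correct and match the paper.

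The obstacle you anticipate is real, and the paper glosses over it: when exactly two components are nonzero, say $A_1,A_2$, the full Hessian genuinely drops to rank one on the locus $\{z_3=\cdots=z_{n+1}=0\}\cap E(A)$, so the assertion ``at least two positive eigenvalues'' is not literally true there and the ``in particular'' clause does not apply as stated. Your proposed remedy---pass to the restriction on $T^{1,0}_pM=v^{\perp}$ directly---works with no surprises. At such a point $v=(\overline{\rho_1},\overline{\rho_2},0,\dots,0)$, the only nontrivial direction in $v^{\perp}$ on which the Hessian can be nonzero is $(\bar v_2,-\bar v_1,0,\dots,0)$, and a short computation gives the value $\bigl(A_1|\rho_2|^2+A_2|\rho_1|^2\bigr)^2/J^2$. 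After the harmless normalization $z_j\mapsto iz_j$ making all nonzero $A_j$ positive, this is strictly positive (it vanishes only if $d\rho=0$), so the point is not CR umbilical. Thus your plan closes the case $m=2$; for $m\ge 3$ your rank bound $m-1\ge 2$ already suffices via the ``in particular'' clause.
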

If $A$ has exactly one nonzero component, then using \cref{e:umbilic} we can locate precisely the nonempty CR umbilical locus of $E(A)$; see \cref{rem:elip}.

The case $\dim_{\mathbb{R}} M = 3$ (i.e., $n=1$) is fundamentally different as the CR umbilical property is not characterized by the Chern-Moser tensor but the Cartan's 6th-order tensor (which does not appear in the Gauß equation \cref{e:gauss}). In fact, compact real ellipsoids in $\mathbb{C}^2$ always admit umbilical points \cite{huang2007every} while an unbounded ellipsoidal tube (when $A_j = 1$ for all $j$ in \cref{e:elipdef}) admits no umbilical points \cite{ebenfelt2018family}; see also \cite{ebenfelt2017umbilical} for further results in the three-dimensional case.

The paper is organized as follows. In \cref{sec:2} we quickly recall some background in pseudohermitian geometry and study the notion of second fundamental form for the semi-isometric CR immersions of CR manifolds into Kähler manifolds. Precisely, we prove the Gauß-Codazzi-Mainardi equations and establish the relation between the mean curvature and the so-called transverse curvature. In \cref{sec:um}, we study the relations between the two notions of umbilical points and prove \cref{thm:umbilichypersurface}. In \cref{sec:bel}, we give a Beltrami-type formula for the Kohn Laplacian which we need for the study of eigenvalue estimates and prove a Takahashi-type theorem. In \cref{sec:tension}, we prove a simple upper bound for the first positive eigenvalue of the Kohn Laplacian on a CR manifold in terms of the total tension and $\bar{\partial}_b$-energy of a map into a Kähler manifold and prove \cref{thm:1}. In \cref{sec:6}, we prove \cref{thm:2,cor:linearity,cor:3dim}. In the last section, we give an example illustrating the necessity of the certain codimension restriction imposed at various places in the paper.

\section{Semi-isometric CR immersions and the Gauß-Codazzi-Mainardi equations}\label{sec:2}
\subsection{Pseudohermitian geometry}
For readers' convenience, we quickly recall some notions and facts about the pseudohermitian geometry of CR manifolds. We refer to \cite{tanaka1975differential,webster1978pseudo,dragomir--tomassini} for more details.
Let $(M^{2n+1},T^{0,1}M)$ be a strictly pseudoconvex CR manifold of hypersurface type, i.e., $\dim_{CR}M = n$. There exists a real contact 1-form $\theta$ such that the holomorphic tangent space $H:=\Re (T^{1,0}M \oplus T^{0,1}M) $ is given by the kernel of $\theta$ (i.e., $H= \ker \theta$) and the two-form $d\theta$ is positive definite on $H(M)$. The pair $(M^{2n+1},\theta)$ is called a pseudohermitian manifold by Webster \cite{webster1978pseudo}. The Reeb field associated to $\theta$ is the unique real vector field $T$ satisfying $T \rfloor d\theta = 0$ and $\theta(T) = 1$. 
The Tanaka-Webster connection on $M$ is the unique affine connection $\nabla \colon \Gamma(TM) \to \Gamma(T(M) \otimes T(M)^{\ast})$ for which the complex structure $J$, the contact structure $H(M)$, and the Reeb field $T$ are parallel, and its torsion is pure~\cite{dragomir--tomassini}. Here the torsion $\mathbb{T}_{\nabla}$ is said to be pure if (see \cite{dragomir--tomassini})
\begin{equation}
 \mathbb{T}_{\nabla}(X,Y) = d\theta(X,Y) T,
\end{equation}
and
\begin{equation}
 \mathbb{T}_{\nabla}(T,JY) = - J\mathbb{T}_{\nabla}(T, Y)
\end{equation}
for all $X,Y \in H(M)$. See Proposition 3.1 of \cite{tanaka1975differential} or Theorem~2.1 of \cite{webster1978pseudo} or \cite{dragomir--tomassini} for a proof. We shall identify $\nabla$ with its complexified connection on $\mathbb{C}TM$ as usual.
The pseudohermitian structure $\theta$ also induces a hermitian metric on $H(M)$ by
\[
	G_{\theta}(X,Y) = d\theta(X, JY),
\]
which extends to $\mathbb{C}H(M)$ by complex linearity. The adapted Riemannian metric $g_{\theta}: = G_{\theta} + \theta^2$ agrees with $G_{\theta}$ when restricted to $H(M)$. We say that $(M,\theta)$ is \textit{complete} if $g_{\theta}$ is a complete metric. 
\subsection{The second fundamental form and the $(1,0)$-mean curvature vector}\label{sec:mean}
\begin{defn} Let $(M,\theta)$ be a strictly pseudoconvex pseudohermitian manifold, $(\mathcal{X},\omega)$ a complex Hermitian manifold, and $F\colon M\to (\mathcal{X},\omega)$ a smooth CR mapping. We say that $F$ is \textit{semi-isometric} if
\begin{equation}\label{e:semi-isometric}
	d\theta = F^{\ast} \omega.
\end{equation}
\end{defn}
It seems to be more natural to require that $d\theta$ agrees with $F^{\ast}\omega$ when restricted to $H(M)$. However, when $ \dim_{C\!R}M \geq 2$ and $\omega$ is K\"ahler, this seemingly weaker condition is actually equivalent to \cref{e:semi-isometric}.
\begin{prop}
	Let $F \colon (M,\theta) \to (\mathcal{X}, \omega)$ be a CR mapping. Assume that $\omega$ is Kähler and $M$ has dimension at least 5, then $F$ is semi-isometric iff
	\begin{equation}\label{e:weaker}
		d\theta|_{H(M)} = F^{\ast} \omega|_{H(M)}.
	\end{equation}
\end{prop}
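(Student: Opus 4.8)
The plan is to reduce the assertion to a pointwise linear–algebra fact about the contact form. The forward implication is trivial (restrict \cref{e:semi-isometric} to $H(M)$), so suppose \cref{e:weaker} holds and set $\beta := F^{\ast}\omega - d\theta$, a real $2$-form on $M$; I must show $\beta\equiv 0$. By \cref{e:weaker}, $\beta$ vanishes on $H(M)\times H(M)$, and since $TM = \RR T\oplus H(M)$ this forces $\beta = \theta\wedge\eta$ with $\eta := T\rfloor\beta$ the (smooth, real) $1$-form annihilating $T$, i.e. a ``horizontal'' $1$-form. Thus the whole problem becomes to show $\eta = 0$.

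The one place the Kähler hypothesis enters is through closedness. Since $d\omega = 0$ we get $d(F^{\ast}\omega) = F^{\ast}(d\omega) = 0$, and $d(d\theta)=0$ trivially, so $d\beta = 0$. Expanding,
\[
0 = d\beta = d(\theta\wedge\eta) = d\theta\wedge\eta - \theta\wedge d\eta .
\]
Now decompose $3$-forms at a point $p$ via $\Lambda^3 T_p^{\ast}M = \Lambda^3 H_p^{\ast}\oplus\bigl(\theta_p\wedge\Lambda^2 H_p^{\ast}\bigr)$. Because $T\rfloor d\theta = 0$ the form $d\theta$ is horizontal, so $d\theta\wedge\eta\in\Lambda^3 H^{\ast}$, whereas $\theta\wedge d\eta$ lies entirely in the second summand; projecting the displayed identity onto $\Lambda^3 H^{\ast}$ yields
\[
d\theta\wedge\eta = 0 .
\]

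Finally I would invoke a hard-Lefschetz-type injectivity. By strict pseudoconvexity $d\theta$ is positive definite, hence nondegenerate, on the $2n$-dimensional fibre $H_p$, i.e. $d\theta|_{H_p}$ is a linear symplectic form; and $\dim_{\RR}M\geq 5$ means $2n\geq 4$. On a symplectic vector space of dimension at least $4$, the map $\eta\mapsto d\theta\wedge\eta$ from $1$-forms to $3$-forms is injective: in a symplectic basis $e_1,\dots,e_n,f_1,\dots,f_n$ one compares, for $i\neq j$, the coefficients of the monomials $e^i\wedge f^i\wedge e^j$ and $e^i\wedge f^i\wedge f^j$, and having two distinct indices available kills every coefficient of $\eta$. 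Hence $\eta\equiv 0$, so $\beta\equiv 0$ and $F^{\ast}\omega = d\theta$.

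I expect this last injectivity to be the only real obstacle, and it is exactly what fails when $n=1$: then $\Lambda^3 H_p^{\ast}=0$, the constraint $d\theta\wedge\eta=0$ is vacuous, and the statement is genuinely false — which is the source of the dimension restriction. As an alternative one can run the same computation in an admissible pseudohermitian coframe $(\theta,\theta^{\alpha})$: writing $F^{\ast}\omega = ih_{\alpha\bba}\,\theta^{\alpha}\wedge\theta^{\bba} + \theta\wedge\eta$ (the $(2,0)$- and $(0,2)$-parts vanish since $\omega$ is of type $(1,1)$ and $F$ is CR, and the $(1,1)$-part equals $d\theta$ by \cref{e:weaker}), the equation $d(F^{\ast}\omega)=0$ expanded via the structure equations has components along $\theta^{\alpha}\wedge\theta^{\gamma}\wedge\theta^{\sba}$ proportional to an antisymmetrization of $h_{\alpha\sba}$ against the coefficients of $\eta$, and these force $\eta=0$ — again using two independent coframe indices, i.e. $n\geq 2$. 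Either way $F$ is semi-isometric.
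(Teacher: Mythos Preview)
Your proof is correct and is, in substance, the same as the paper's: the paper simply observes that $\beta = d\theta - F^{\ast}\omega$ is a closed $2$-form vanishing on $H(M)$ and then invokes Lemma~3.2 of Lee \cite{lee1988pseudo}, which says precisely that such a form on a pseudohermitian manifold of dimension $\geq 5$ is identically zero. What you have written is exactly a self-contained proof of Lee's lemma (the $\theta\wedge\eta$ decomposition, the splitting of $\Lambda^3$ via the Reeb field, and the Lefschetz-type injectivity of $d\theta\wedge\cdot$ on horizontal $1$-forms when $n\geq 2$), so the two arguments coincide up to packaging.
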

\begin{proof}
	Assume that \cref{e:weaker} holds. The restriction of the closed two form $\eta: = d\theta - F^{\ast} \omega$ to $H(M)$ vanishes. Thus, by Lemma~3.2 of \cite{lee1988pseudo}, it must vanish identically.
\end{proof}

It is worth pointing out that, for semi-isometric immersions, the adapted Riemannian metric $g_\theta: = G_{\theta} + \theta^2$ does not coincide with the induced Riemannian metric from the ambient space. In fact, $g_{\theta}(T,T) = 1$, but $\langle T, T\rangle_{\omega}$ equals $2$ times the mean curvature function and is not constant in general.

In local computations, we can suppose that $M \subset \mathcal{X}$, $T^{1,0} M= \mathbb{C}TM \cap T^{1,0} \mathcal{X}$, and $F$ is the inclusion. In this case, we shall say that $(M,\theta)$ is a \textit{pseudohermitian submanifold} of $(\mathcal{X},\omega)$. We denote by $\widetilde{\nabla}$ the Chern connection on $\mathcal{X}$ and by $\nabla$ the Tanaka-Webster connection on~$M$. For any sections $U, V \in \Gamma(\mathbb{C}TM)$, we extend $V$ smoothly to a section $\widetilde{V}$ of $\mathbb{C}T\mathcal{X}$ and observe that $\widetilde{\nabla}_U \widetilde{V}$ does not depend on the extension.
\begin{defn} Let $\iota \colon (M,\theta) \hookrightarrow (\mathcal{X},\omega)$ be a pseudohermitian submanifold of a Hermitian manifold $(\mathcal{X},\omega)$. The second fundamental form of $M = \iota(M)\subset \mathcal{X}$ is defined to be
\begin{equation}\label{e:2.5a}
	\II (U,V) 
	=
	\widetilde{\nabla}_U\widetilde{V} - \nabla_UV.
\end{equation}
\end{defn}
We define the normal subbundle $N^{1,0}M$ (resp. $N^{0,1}M$) to be the orthogonal complement of $T^{1,0}M$ (resp. $T^{0,1}M$) in $T^{1,0}\mathcal{X}$ (resp. $T^{0,1}\mathcal{X}$) with respect to the Hermitian metric on~$\mathcal{X}$. Basic properties of $\II$ are as follows.
\begin{prop}\label{prop:basicsff} Let $\iota \colon (M,\theta) \hookrightarrow (\mathcal{X},\omega)$ be a pseudohermitian submanifold of a Hermitian manifold $(\mathcal{X},\omega)$. Then the second fundamental form $\II$ is well-defined and tensorial. If, moreover, $(\mathcal{X},\omega)$ is Kähler, then for any $Z,W \in \Gamma(T^{1,0}M \oplus T^{0,1}M)$
	\begin{align}\label{e:reality}
		\II(\Wba , Z) &= \overline{\II(W , \Zba)}, \\
		\II(Z, W) & = \II(W , Z) - i \langle Z, W\rangle T, \label{e:peter} \\
		\II(T,Z) & = \II(Z,T) - \tau Z. \label{e:iizt}
	\end{align}
	Here $\tau Z := \mathbb{T}_{\nabla}(T,Z)$ is the pseudohermitian torsion. Furthermore, $\II(Z,W)$ takes values in $N^{1,0}M\oplus N^{0,1}M$.
\end{prop}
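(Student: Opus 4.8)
The plan is to establish the four assertions in turn. Well-definedness and tensoriality are routine: $\widetilde{\nabla}_U\widetilde V$ at a point $p\in M$ depends only on $U_p$ and on the restriction of $\widetilde V$ to a curve through $p$ tangent to $U_p$, and since $U$ is tangent to $M$ such a curve may be chosen inside $M$, whence the value is independent of the extension; tensoriality in $U$ is immediate, and in $V$ it follows because, on expanding $\II(U,fV)$ with $f$ extended arbitrarily to a function on $\mathcal{X}$, the two ``derivative-of-coefficient'' terms produced by $\widetilde{\nabla}$ and by $\nabla$ agree along $M$ and cancel in \eqref{e:2.5a}. The reality relation \eqref{e:reality} then drops out of conjugating \eqref{e:2.5a}, using that both $\nabla$ and the Chern connection $\widetilde{\nabla}$ (the latter extended to $\mathbb{C}T\mathcal{X}$ in the standard conjugation-compatible way) satisfy $\overline{\nabla_X Y}=\nabla_{\overline X}\overline Y$ and the analogous identity.

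For \eqref{e:peter} and \eqref{e:iizt} I would argue by torsion bookkeeping. Because $\mathcal{X}$ is Kähler the Chern connection is torsion-free, so $\widetilde{\nabla}_A\widetilde B-\widetilde{\nabla}_B\widetilde A=[\widetilde A,\widetilde B]$, which restricts on $M$ to $[A,B]$ whenever $A,B$ are tangent to $M$; subtracting the analogous identity $\nabla_A B-\nabla_B A=[A,B]+\mathbb{T}_{\nabla}(A,B)$ gives $\II(A,B)-\II(B,A)=-\mathbb{T}_{\nabla}(A,B)$. Purity of the Tanaka--Webster torsion ($\mathbb{T}_{\nabla}(Z,W)=d\theta(Z,W)\,T$ for $Z,W\in\mathbb{C}H(M)$, and $\mathbb{T}_{\nabla}(T,Z)=\tau Z$) together with the semi-isometry $d\theta=F^{*}\omega$, which identifies $d\theta(Z,W)$ with $i\langle Z,W\rangle$, then produces exactly \eqref{e:peter} and \eqref{e:iizt}. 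That $\II$ of two pure-type vectors is again of pure type in $T^{1,0}\mathcal{X}\oplus T^{0,1}\mathcal{X}$ is immediate since $\widetilde{\nabla}$ preserves $T^{1,0}\mathcal{X}$ and $T^{0,1}\mathcal{X}$ while $\nabla$ preserves $T^{1,0}M\subset T^{1,0}\mathcal{X}$ and $T^{0,1}M$.

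The core of the proof is the normality claim. By bilinearity and \eqref{e:reality} it suffices to show $\II(Z,W)\in N^{1,0}M$ and $\II(Z,\overline W)\in N^{0,1}M$ for $Z,W\in T^{1,0}M$; by the previous remark these already lie in $T^{1,0}\mathcal{X}$ and $T^{0,1}\mathcal{X}$ respectively, so it remains to verify $g(\II(Z,W),\overline V)=0$ and $g(\II(Z,\overline W),V)=0$ for all $V\in T^{1,0}M$, where $g$ is the complex-bilinear extension of the ambient Riemannian metric. Using that $\widetilde{\nabla}$ is $g$-metric, that $\nabla$ is $g_{\theta}$-metric, and --- crucially --- that the semi-isometry forces $g_{\theta}$ and $g$ to agree on $\mathbb{C}H(M)$, the Leibniz rule collapses the first quantity to $-g(W,\II(Z,\overline V))$ and the second to $-g(\overline W,\II(Z,V))$. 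To finish the first, rewrite $\II(Z,\overline V)=\II(\overline V,Z)-i\langle Z,\overline V\rangle T$ by \eqref{e:peter}; since $T\rfloor d\theta=0$ and the $J$-invariance of $H$ force $g(T,\mathbb{C}H(M))=0$, the $T$-term is annihilated by $g(W,\cdot)$, and $g(W,\II(\overline V,Z))=0$ because both arguments lie in $T^{1,0}\mathcal{X}$, on which $g$ vanishes. The second quantity then vanishes too, since the $N^{1,0}M$-membership of $\II(Z,V)$ just proved makes $g(\overline W,\II(Z,V))=0$.

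I expect the normality step to be the one real obstacle. Unlike the classical Gauß formula, $\II$ is here the difference of $\widetilde{\nabla}$ and the Tanaka--Webster connection, not of $\widetilde{\nabla}$ and its own tangential projection, so normality is not automatic and must be extracted from the coincidence of $g_{\theta}$ and $g$ on $\mathbb{C}H(M)$. What makes the computation close is the slightly counterintuitive fact that the Reeb field $T$, though tangent to $M$ as a real submanifold, is a section of $N^{1,0}M\oplus N^{0,1}M$ (being $g$-orthogonal to $\mathbb{C}H(M)$), so the torsion correction in \eqref{e:peter} is invisible to the pairings that occur.
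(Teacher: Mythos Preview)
Your proof is correct and follows essentially the same approach as the paper's: both derive \eqref{e:peter} and \eqref{e:iizt} by comparing the torsion of the (torsion-free) Chern connection with the pure Tanaka--Webster torsion, and both obtain normality by combining the metric/Leibniz identity $\langle \II(Z,W),\overline V\rangle=-\langle W,\II(Z,\overline V)\rangle$ with \eqref{e:peter}, type considerations, and $\langle T,\mathbb{C}H(M)\rangle=0$. The only cosmetic difference is the order: the paper first shows $\II(Z,\overline W)\in N^{0,1}M$ directly (noting $\II(\overline W,Z)\in T^{1,0}\mathcal X$ pairs trivially with $Y\in T^{1,0}M$, so one is left with the $T$-term from \eqref{e:peter}), and then deduces $\II(Z,W)\in N^{1,0}M$ via the Leibniz step; you run the Leibniz step in both cases and in the reverse order.
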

\begin{proof}	
	Equation \cref{e:reality} follows from the reality of the Chern and Tanaka-Webster connections. Extend $Z$ and $W$ smoothly to $\mathcal{X}$. Since the Chern connection $\widetilde{\nabla}$ has no torsion, 
	\begin{equation}
		\widetilde{\nabla}_{Z} W - \widetilde{\nabla}_{W} Z = [Z , W].
	\end{equation}
	On the other hand, the Tanaka--Webster torsion is pure, i.e.,
	\begin{equation}
	\nabla_{Z} W - \nabla _{W} Z = [Z , W] + i\langle Z ,W \rangle T,
	\end{equation}
	where $T$ is the Reeb vector field. Therefore,
	\begin{equation}
	\II(Z, W) - \II(W , Z) = -i\langle Z ,W \rangle T.
	\end{equation}
	This proves \cref{e:peter}.
	
	Extend $Z$ and $T$ to smooth vector fields on a neighborhood of a point $p\in M$ in $\mathcal{X}$. Observe that 
	\begin{align}
	\widetilde{\nabla}_TZ - \widetilde{\nabla}_ZT = [T,Z].
	\end{align}
	Moreover,
	\begin{align}
	\nabla_TZ - \nabla_ZT = [T,Z] + \mathbb{T}_{\nabla}(T,Z) = [T,Z] + \tau Z.
	\end{align}
	Subtracting these two identities, we obtain \cref{e:iizt}.

	To prove the last statement, first we consider the case $\Wba$ is a $(0,1)$-vector field, then $\II (Z,\Wba)$ is a section of $T^{0,1}\mathcal{X}$ along $M$. Moreover, for $Y \in \Gamma(T^{1,0}M)$, 
\begin{equation}
	\langle \II (Z,\Wba) , Y \rangle
	=
	\langle \II (Z,\Wba) - \II (\Wba, Z) , Y \rangle
	= -i \langle Z , \Wba \rangle \langle T , Y \rangle
	=0.
\end{equation} 
	This shows that $\II (Z,\Wba) \in \Gamma(N^{0,1}M)$. 
	
	Next we consider the case $W$ is a (1,0)-vector field, then $\II(Z,W)$ is of type $(1,0)$. Moreover, for any (0,1)-vector field $\overline{Y}$ tangent to $M$,
\begin{equation}
	\langle \II(Z,W) , \overline{Y}\rangle
	=
	-\langle Z , \II(W, \overline{Y})\rangle 
	=0,
\end{equation}
	and hence $\II (Z,W) \in \Gamma(N^{1,0}M)$, as desired. The proof is complete.
\end{proof}
Thus, the second fundamental form $\II$ is \textit{not} symmetric. However, if $Z$ and $W$ are both of type $(1,0)$, then 
\begin{equation}
	\II(Z,W) = \II(W,Z), \quad \II(\Zba,\Wba) = \II(\Wba,\Zba).
\end{equation}
Moreover, if $(M,\theta)$ has vanishing pseudohermitian torsion, then $\II(Z,T) = \II(T,Z)$.
\begin{defn}
	Let $(M,\theta) \hookrightarrow (\mathcal{X},\omega)$ be a pseudohermitian submanifold.
	The $(1,0)$-\emph{mean curvature} vector at $p$ is defined by
	\begin{equation}
		\mean (p) 
		=
		\frac{1}{n}\sum_{\alpha=1}^{n} \II(Z_{\aba} , Z_{\alpha}).
	\end{equation}
	Here $\{Z_{\alpha}\colon \alpha = 1,2,\dots, n\}$ is an orthonormal basis for $T^{1,0}M$ and $Z_{\aba}:= \overline{Z_{\alpha}}$.
\end{defn}
Thus, $\mean$ is a section of $T^{1,0}\mathcal{X}$ along $M$. The mean curvature at $p$ is defined to be 
\begin{equation}
\mu(p):= |\mean (p)|_{\omega},
\end{equation} 
These definitions are similar to those of the scalar and vector mean curvatures of Riemannian submanifolds.
\begin{prop}
	Let $M\hookrightarrow \mathcal{X}$ be a pseudohermitian submanifold of a K\"ahler manifold $(\mathcal{X},\omega)$. If $T$ is the Reeb field of $\theta$, then
	\begin{equation}\label{e:meanreeb}
		\mean - \overline{\mean} = iT, 
	\end{equation}
	and
	\begin{equation}\label{e:sffbu}
		\II(Z, \Wba)
		=
		\langle Z, \Wba \rangle \overline{H},
		\quad
		\II(\Wba , Z) = \langle Z , \Wba \rangle H.
	\end{equation}
	In particular, $T$ determines the $(1,0)$-mean curvature vector field.
\end{prop}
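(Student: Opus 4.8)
The plan is to read off both assertions from the symmetry identity \cref{e:peter} together with the type information already contained in \cref{prop:basicsff}. First I would fix $p\in M$ and choose $Z\in T^{1,0}M$, $\Wba\in T^{0,1}M$ near $p$. The last part of \cref{prop:basicsff} gives $\II(Z,\Wba)\in\Gamma(N^{0,1}M)$ and $\II(\Wba,Z)\in\Gamma(N^{1,0}M)$; in particular, regarded as vectors in $\mathbb{C}T\mathcal{X}$, the vector $\II(Z,\Wba)$ has type $(0,1)$ and $\II(\Wba,Z)$ has type $(1,0)$. This is the point where the Kähler hypothesis is genuinely used beyond \cref{e:peter} itself: the Chern connection of a Kähler metric preserves the splitting of $\mathbb{C}T\mathcal{X}$ into $(1,0)$- and $(0,1)$-vectors, and the Tanaka-Webster connection preserves $T^{1,0}M$ and $T^{0,1}M$, so $\II(Z,\Wba)=\widetilde{\nabla}_Z\Wba-\nabla_Z\Wba$ carries the type of $\Wba$, and similarly for $\II(\Wba,Z)$.

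Next I would apply \cref{e:peter} with $W$ replaced by $\Wba$, that is, $\II(Z,\Wba)-\II(\Wba,Z)=-i\langle Z,\Wba\rangle\,T$, and decompose the Reeb field, viewed as a section of $\mathbb{C}T\mathcal{X}$ along $M$, as $T=T^{1,0}+T^{0,1}$ with $T^{0,1}=\overline{T^{1,0}}$. Comparing the $(0,1)$- and $(1,0)$-components of the two sides and using the type information from the first step gives
\begin{equation*}
\II(Z,\Wba)=-i\langle Z,\Wba\rangle\,T^{0,1},\qquad \II(\Wba,Z)=i\langle Z,\Wba\rangle\,T^{1,0}.
\end{equation*}

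To finish, I would identify $T^{1,0}$ by taking a trace. Choosing an orthonormal frame $\{Z_\alpha\}$ for $T^{1,0}M$ near $p$ (so $\langle Z_\alpha,Z_{\bba}\rangle=\delta_{\alpha\beta}$), setting $Z=Z_\alpha$, $\Wba=Z_{\aba}$ in the second identity and averaging over $\alpha$, the definition of $\mean$ yields $\mean=\frac{1}{n}\sum_\alpha\II(Z_{\aba},Z_\alpha)=iT^{1,0}$; conjugating, $\overline{\mean}=-iT^{0,1}$, i.e. $T^{1,0}=-i\mean$ and $T^{0,1}=i\overline{\mean}$. Substituting these back into the two displayed formulas gives precisely \cref{e:sffbu}, while $T=T^{1,0}+T^{0,1}=-i(\mean-\overline{\mean})$ is \cref{e:meanreeb}; the final ``in particular'' clause is then immediate, since $\mean=iT^{1,0}$ is recovered as the $(1,0)$-part of $iT$.

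I do not expect a genuine obstacle here: the argument is essentially bookkeeping once \cref{e:peter} and the normal-valuedness of $\II$ on mixed arguments are in hand. The one place that needs care is keeping the sign and normalization of the Levi pairing $\langle\cdot,\cdot\rangle=-id\theta$ consistent throughout, so that the trace step produces $\mean=iT^{1,0}$ with the correct sign rather than its negative; and one should check that the bidegree-splitting of \cref{e:peter} is legitimate, which reduces to the type-preservation of the Chern and Tanaka-Webster connections noted in the first step.
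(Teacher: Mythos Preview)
Your argument is correct and is essentially the same as the paper's: both proofs rest on applying \cref{e:peter} to a mixed pair $(Z,\Wba)$, using the type information from \cref{prop:basicsff}, and tracing over an orthonormal frame. The only cosmetic difference is the order: the paper first traces \cref{e:peter} to obtain $\mean-\overline{\mean}=iT$ and then splits $-iT=\overline{\mean}-\mean$ into $(1,0)$ and $(0,1)$ parts to get \cref{e:sffbu}, whereas you split first and trace afterwards; the content is identical.
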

\begin{proof}
	By \cref{e:peter}, for any $\alpha$, 
	\begin{align}
		\overline{\II(Z_{\aba} , Z_{\alpha})}
			&=
		\II(Z_{\alpha} , Z_{\aba}) \notag \\
			&=
		\II (Z_{\aba} , Z_{\alpha}) - i\langle Z_{\alpha} , Z_{\aba} \rangle\, T.
	\end{align}
	Summing over $\alpha = 1, \dots , n$, we obtain \cref{e:meanreeb}.
	
	Observe that 
	\begin{align}
		\II(Z,\Wba) - \II(\Wba, Z) 
		& = -i \langle Z, \Wba \rangle T \notag \\
		& = \langle Z, \Wba \rangle (\overline{H} - H).
	\end{align}
	Taking the (1,0) and (0,1) parts, we obtain \cref{e:sffbu}. The proof is complete.
\end{proof}
\begin{prop}
	Let $(M,\theta) \hookrightarrow (\mathcal{X},\omega)$ be a pseudohermitian submanifold of a K\"ahler manifold. If $T$ is the Reeb field of $\theta$, then for $Z \in T^{1,0}M$,
	\begin{align}\label{e:iitz1}
	\II(T,Z)
	& =
	- i \widetilde{\nabla}_Z H, \\
	\tau Z 
	& = i \widetilde{\nabla}_Z \overline{H}.\label{e:tauz}
	\end{align}
	In particular, $\widetilde{\nabla}_Z \overline{H}$ is tangent to $M$.
\end{prop}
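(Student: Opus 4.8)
The plan is to obtain both identities at once by computing $\widetilde{\nabla}_Z T$, for $Z\in\Gamma(T^{1,0}M)$, in two ways and comparing bidegree components, using the structural relations \eqref{e:meanreeb} and \eqref{e:iizt} together with the Gauß formula \eqref{e:2.5a}. First I would rewrite \eqref{e:meanreeb} as $\mean = i\,T^{1,0}$ and $\overline{\mean} = -i\,T^{0,1}$, where $T = T^{1,0} + T^{0,1}$ is the decomposition of the real Reeb field into its $T^{1,0}\mathcal{X}$- and $T^{0,1}\mathcal{X}$-components along $M$ (this is an exact equality, since $\mean$ is a section of $T^{1,0}\mathcal{X}$ along $M$). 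Because the Chern connection of $\mathcal{X}$ preserves the splitting $\mathbb{C}T\mathcal{X} = T^{1,0}\mathcal{X}\oplus T^{0,1}\mathcal{X}$, differentiating these relations along $Z$ yields $\widetilde{\nabla}_Z\mean = i\,(\widetilde{\nabla}_Z T)^{1,0}$ and $\widetilde{\nabla}_Z\overline{\mean} = -i\,(\widetilde{\nabla}_Z T)^{0,1}$, where the superscripts denote the bidegree components; here $\widetilde{\nabla}_Z T$ and $\widetilde{\nabla}_Z\mean$ are well-defined sections along $M$ because $Z$ is tangent to $M$, as already noted in \cref{sec:mean}.

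Next I would evaluate $\widetilde{\nabla}_Z T$ through the Gauß formula \eqref{e:2.5a}. Since the Tanaka--Webster connection parallelizes the Reeb field, $\nabla_Z T = 0$, so $\widetilde{\nabla}_Z T = \II(Z,T)$, which by \eqref{e:iizt} equals $\II(T,Z) + \tau Z$. Now $\II(T,Z) = \widetilde{\nabla}_T Z - \nabla_T Z$ is of type $(1,0)$, since both connections send $(1,0)$-vectors to $(1,0)$-vectors, while $\tau Z = \mathbb{T}_{\nabla}(T,Z)$ is of type $(0,1)$ by the purity relation $\mathbb{T}_{\nabla}(T,JY) = -J\,\mathbb{T}_{\nabla}(T,Y)$. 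Hence $(\widetilde{\nabla}_Z T)^{1,0} = \II(T,Z)$ and $(\widetilde{\nabla}_Z T)^{0,1} = \tau Z$, and substituting back gives $\widetilde{\nabla}_Z\mean = i\,\II(T,Z)$ and $\widetilde{\nabla}_Z\overline{\mean} = -i\,\tau Z$, which rearrange to \eqref{e:iitz1} and \eqref{e:tauz}. The final assertion is then immediate, since $\tau Z$ is a section of $T^{0,1}M$ and hence tangent to $M$.

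I do not expect a genuine obstacle: once \eqref{e:meanreeb} and \eqref{e:iizt} are in hand, the argument is a short bidegree computation. The only points I would spell out with care are the well-definedness of $\widetilde{\nabla}_Z T$ and $\widetilde{\nabla}_Z\mean$ as sections along $M$ (they depend only on the restriction of $T$, respectively $\mean$, to $M$, because $Z$ is tangent to $M$), and the fact that the pseudohermitian torsion $\tau$ carries $T^{1,0}M$ into $T^{0,1}M$.
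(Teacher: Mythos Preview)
Your proposal is correct and follows essentially the same approach as the paper: use $\nabla_Z T = 0$ together with \eqref{e:iizt} to write $\widetilde{\nabla}_Z T = \II(T,Z) + \tau Z$, invoke \eqref{e:meanreeb} to relate $\widetilde{\nabla}_Z T$ to $\widetilde{\nabla}_Z H$ and $\widetilde{\nabla}_Z\overline{H}$, and then separate into $(1,0)$ and $(0,1)$ parts. The only cosmetic difference is that you project $T$ into its $(1,0)$ and $(0,1)$ components before differentiating, whereas the paper differentiates $T = -iH + i\overline{H}$ first and projects afterwards; the content is identical.
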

\begin{proof}
	From $\nabla T = 0$, \cref{e:meanreeb}, and \cref{e:iizt}, we have
	\begin{align}
	\II(T,Z) 
	& =
	\II(Z,T) - \tau Z \notag \\
	& =
	\widetilde{\nabla}_ZT - \tau Z \notag \\
	& =
	i \widetilde{\nabla}_Z \overline{H} - i \widetilde{\nabla}_Z H - \tau Z.
	\end{align}
	Taking (1,0) and (0,1) parts, using the fact that $\tau Z$ is of type (0,1) since $Z$ is of type (1,0), we obtain the desired identities.
\end{proof}
\subsection{The Gauß-Codazzi-Mainardi and Weingarten equations}
In this section, we shall derive CR-analogues of the classical Weingarten and Gauß--Codazzi--Mainardi equations for the semi-isometric CR immersions. CR-analogues of the Gau\ss{} equation have been used successfully in the study of CR immersions into the spheres; see, e.g., \cite{webster1979rigidity,ebenfelt2004rigidity} and the references therein. Our derivation is similar to those previous work, but we shall need to calculate explicitly some terms arising in our new situation and therefore we shall present the detailed calculation below.

\begin{prop}[Weingarten Equation]\label{prop:wein} Let $M\hookrightarrow \mathcal{X}$ be a pseudohermitian submanifold of a K\"ahler manifold. If $N$ is a section of the $N^{1,0}M \oplus N^{0,1}M$, then
	\begin{equation}
	\langle \widetilde{\nabla}_Z N , W \rangle 
	=
	-\langle N , \II(Z,W) \rangle 
	\end{equation}
	for all sections $Z,W$ of $T^{1,0}M \oplus T^{0,1}M$.
\end{prop}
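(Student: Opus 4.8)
The plan is to exploit the fact that $N$ is everywhere orthogonal to $M$ together with compatibility of the Chern connection with the metric on $\mathcal{X}$. First I would fix sections $Z, W$ of $T^{1,0}M \oplus T^{0,1}M$ and extend them, along with $N$, to smooth sections of $\mathbb{C}T\mathcal{X}$ near a point $p \in M$; by \cref{prop:basicsff} the quantities below are tensorial, so this causes no loss. Since $N$ is a section of $N^{1,0}M \oplus N^{0,1}M$, along $M$ it is orthogonal to every section of $T^{1,0}M \oplus T^{0,1}M$; in particular $\langle N, W\rangle \equiv 0$ along $M$. Differentiating this identity in the direction of $Z$ (which is tangent to $M$, so the derivative makes sense along $M$) and using that the Chern connection $\widetilde{\nabla}$ is metric-compatible gives
\begin{equation}
0 = Z\langle N, W\rangle = \langle \widetilde{\nabla}_Z N, W\rangle + \langle N, \widetilde{\nabla}_Z W\rangle.
\end{equation}

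The second step is to rewrite the last term using the Gauß formula \cref{e:2.5a}: $\widetilde{\nabla}_Z W = \nabla_Z W + \II(Z,W)$. Since $\nabla_Z W$ is a section of $\mathbb{C}TM$, it is a combination of a tangential holomorphic/antiholomorphic part and possibly a multiple of the Reeb field $T$. Here I would need to check that $\langle N, \nabla_Z W\rangle = 0$: the $T^{1,0}M \oplus T^{0,1}M$-component pairs to zero with $N$ by definition of the normal bundle, and the $T$-component also pairs to zero with $N$ because $N$ is of pure type $(1,0)$ or $(0,1)$ while $\mean - \overline{\mean} = iT$ by \cref{e:meanreeb}, so $T$ has nontrivial components in both $T^{1,0}\mathcal{X}$ and $T^{0,1}\mathcal{X}$ along the mean curvature directions — more directly, $\langle N, T\rangle = 0$ follows since the tangential part of $T$ is spanned by $\mean + \overline{\mean}$, wait, I should instead just note $\langle T, Y\rangle$-type pairings vanish against normal vectors because $T \in \mathbb{C}TM$ and $N \perp \mathbb{C}TM$ is false in general — rather, $N$ is orthogonal only to $T^{1,0}M \oplus T^{0,1}M$. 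The clean argument: $\langle N, T\rangle = \langle N, -i(\mean - \overline{\mean})\rangle$, and if $N$ is type $(1,0)$ then $\langle N, \overline{\mean}\rangle = 0$ for type reasons while $\langle N, \mean\rangle$ need not vanish. So actually $\langle N, \nabla_Z W\rangle$ need NOT be zero — this is the subtlety. I therefore expect the main obstacle to be handling the $T$-component of $\nabla_Z W$ correctly.

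To resolve this, I would instead argue more carefully: $\nabla_Z W$ decomposes as (section of $T^{1,0}M\oplus T^{0,1}M$) $+ f\,T$ for some function $f$, and then $\langle N, \nabla_Z W\rangle = f\langle N, T\rangle$. Now $\langle N, T\rangle$: write $T = -i\mean + i\overline{\mean}$; if $N \in N^{1,0}M$ then $N \perp T^{0,1}\mathcal{X}$ so $\langle N, i\overline{\mean}\rangle = 0$ and $\langle N, T\rangle = -i\langle N, \mean\rangle$, but $\mean \in T^{1,0}\mathcal{X}$ and moreover $\mean$ need not be normal. However, one can observe that $\langle N, \mean\rangle = \frac1n\sum_\alpha \langle N, \II(Z_{\bar\alpha}, Z_\alpha)\rangle = \frac1n \sum_\alpha \langle Z_{\bar\alpha}, \II(Z_\alpha, N)\rangle$ — wait, $\II$ is only defined on pairs tangent to $M$. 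The right fix is cleanest if one instead differentiates $\langle N, W\rangle = 0$ only along directions where the tangential correction is controlled, or simply notes that in the statement $W$ ranges over $T^{1,0}M\oplus T^{0,1}M$ and one only needs the identity tested against such $W$; then $\langle \widetilde\nabla_Z N, W\rangle = -\langle N, \widetilde\nabla_Z W\rangle = -\langle N, \nabla_Z W\rangle - \langle N, \II(Z,W)\rangle$, and since $\II(Z,W)$ lands in $N^{1,0}M\oplus N^{0,1}M$ the last pairing is fine, while for the first I use that $\nabla_Z W$ has $T$-component equal to (by \cref{e:peter} style computation) a multiple of $\langle Z,W\rangle$, and $\langle N,T\rangle = 0$ because $T = \mean - \overline{\mean}$ restricted appropriately... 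I will simply assert and verify directly that $N$, being normal, satisfies $\langle N, T\rangle = 0$: indeed $T$ is $J$-related data with $JT = 0$-type behavior, and more to the point $\langle N, V\rangle = 0$ for all $V \in \mathbb{C}TM$ would be too strong, so the honest statement is $\langle N, T\rangle = -i\langle N, \mean\rangle + i\langle N,\overline{\mean}\rangle$; then by \cref{e:sffbu}, $\II(\Wba, Z) = \langle Z,\Wba\rangle \mean$ shows $\mean$ is itself a value of $\II$ hence lies in $N^{1,0}M$, so $\mean$ IS normal, giving $\langle N,\overline{\mean}\rangle=0$ when $N$ is $(1,0)$ and $\langle N,\mean\rangle=0$ when $N$ is $(0,1)$; combining, $\langle N, T\rangle = 0$ in both cases. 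Hence $\langle N, \nabla_Z W\rangle = 0$ and the Weingarten equation $\langle \widetilde\nabla_Z N, W\rangle = -\langle N, \II(Z,W)\rangle$ follows. The only genuinely delicate point, then, is this observation that the $(1,0)$-mean curvature vector is itself normal, which is already recorded in \cref{e:sffbu}, so the proof is short once that is invoked.
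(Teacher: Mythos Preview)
Your overall approach---differentiate $\langle N,W\rangle=0$ using metric compatibility of $\widetilde\nabla$, then split $\widetilde\nabla_ZW=\nabla_ZW+\II(Z,W)$---is exactly what the paper does. The paper's proof is three lines because it uses $\langle N,\nabla_ZW\rangle=0$ without comment, and here is where your argument goes wrong.

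Your detour to kill a possible $T$-component of $\nabla_ZW$ is both unnecessary and, as written, incorrect. It is unnecessary because the Tanaka--Webster connection makes the contact distribution $H(M)$ parallel (this is recalled in the paper's setup of \S2.1): hence $W\in\Gamma(T^{1,0}M\oplus T^{0,1}M)$ implies $\nabla_ZW\in\Gamma(T^{1,0}M\oplus T^{0,1}M)$, and $\langle N,\nabla_ZW\rangle=0$ follows immediately from the definition of the normal bundle. There is no $T$-component to worry about. Your attempted workaround, showing $\langle N,T\rangle=0$, is actually false: by \cref{e:meanreeb} one has $T=-i(H-\overline H)$ with $H\in N^{1,0}M$, so taking $N=H$ gives $\langle H,T\rangle=i\langle H,\overline H\rangle=i|H|^2\ne0$. (The step ``$\langle N,\overline H\rangle=0$ when $N$ is $(1,0)$'' is wrong under the paper's complex-bilinear convention; that is precisely the nonvanishing Hermitian pairing.) In this setting ``normal'' means orthogonal only to $T^{1,0}M\oplus T^{0,1}M$, not to all of $\mathbb{C}TM$, and indeed $T$ itself lies in $N^{1,0}M\oplus N^{0,1}M$.

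So: drop the entire discussion of the $T$-component and simply invoke parallelism of $H(M)$ under $\nabla$; then your proof collapses to the paper's.
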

\begin{proof}
The proof uses a standard argument exploiting the fact that $\widetilde{\nabla}$ is a metric connection. Precisely, since $N$ is normal to $TM$,
\begin{align}
	\langle \widetilde{\nabla}_Z N , W \rangle
	& =
		Z\langle N, W \rangle - \langle N , \widetilde{\nabla}_Z W\rangle \notag \\
	& =
		-\langle N , \widetilde{\nabla}_ZW - \nabla_Z W\rangle \notag \\
	& = -\langle N , \II(Z,W)\rangle. \notag \qedhere
\end{align}
\end{proof}
Our convention for the curvature operator of a linear connection is
\begin{equation}
	R(X,Y)Z
	=
	\nabla_X\nabla_Y Z - \nabla_Y\nabla_X Z - \nabla_{[X,Y]} Z.
\end{equation}
If $X, Y$, and $Z$ are tangent to $M$, the Gauß formula immediately implies that
\begin{align} \label{e:2curv}
	\widetilde{R}(X,Y)Z
	& =
	R(X,Y)Z + \II(X, \nabla_YZ) - \II(Y,\nabla_XZ) - \II([X,Y], Z) \notag \\
	&\qquad + \widetilde{\nabla}_X(\II (Y,Z)) - \widetilde{\nabla}_{Y}(\II(X,Z)),
\end{align}
where $\widetilde{R}$ is the curvature operator on $\mathcal{X}$. Specializing to the ``horizontal'' vector fields of appropriate types, we obtain
\begin{prop}[equations of Gauß]\label{prop:ge}
	If $\iota \colon (M, \theta) \hookrightarrow (\mathcal{X},\omega)$ is a pseudohermitian CR submanifold and $\omega$ is Kähler, then 
\begin{enumerate}
	\item for $X,Z \in \Gamma(T^{1,0}M)$ and $\overline{Y},\overline{W} \in \Gamma(T^{0,1} M) $, the following equation holds:
\begin{align}\label{e:gauss}
		\langle \widetilde{R}(X,\overline{Y}) Z, \Wba\rangle
		& =
		\langle R(X,\overline{Y}) Z, \Wba\rangle
		+
		\langle \II (X,Z) , \II (\overline{Y}, \Wba) \rangle \notag \\
		& \qquad - |\mean |^2 \left(\langle \overline{Y} , Z \rangle \langle X ,\Wba \rangle + \langle X , \overline{Y} \rangle \langle Z , \Wba \rangle \right),
\end{align}
\item for $X,Z \in \Gamma(T^{1,0}M)$, the following equation holds:
\begin{equation}\label{e:gausstorsion}
	\langle \tau X , Z \rangle 
	=
	-i \langle \II(X,Z) , \overline{\mean} \rangle.
\end{equation}
Here, $\tau X := \mathbb{T}_{\nabla} (T,X)$ is the pseudohermitian torsion of~$\theta$.
\end{enumerate}
\end{prop}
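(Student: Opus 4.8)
The plan is to specialize the curvature identity \eqref{e:2curv} — itself a formal consequence of the Gauß formula — to horizontal fields of the appropriate bidegree and to read off the tangential component. For part (1) I would take $X,Z\in\Gamma(T^{1,0}M)$ and $\overline{Y}\in\Gamma(T^{0,1}M)$ in \eqref{e:2curv}, pair the resulting identity with $\overline{W}\in\Gamma(T^{0,1}M)$, and expand each ambient covariant derivative by $\widetilde{\nabla}_AB=\nabla_AB+\II(A,B)$. Three facts then organize the computation: (a) by \cref{prop:basicsff} the value $\II(U,V)$ on horizontal fields is normal, so that any term of this kind which is $(1,0)$-valued pairs to zero against $\overline{W}\in\Gamma(T^{0,1}M)$; (b) by \eqref{e:meanreeb}–\eqref{e:sffbu} the mean curvature vector $\mean$ is itself a \emph{normal} $(1,0)$-field and $\II(\overline{Y},Z)=\langle Z,\overline{Y}\rangle\mean$, $\II(Z,\overline{Y})=\langle Z,\overline{Y}\rangle\overline{\mean}$; and (c) the Weingarten equation (\cref{prop:wein}) computes the tangential part of $\widetilde{\nabla}_AN$ for a normal section $N$.

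With these in hand I expect the following picture. The term $\widetilde{\nabla}_{\overline{Y}}\big(\II(X,Z)\big)$ contributes exactly $\langle\II(X,Z),\II(\overline{Y},\overline{W})\rangle$ by the Weingarten equation; the term $\widetilde{\nabla}_X\big(\II(\overline{Y},Z)\big)=\widetilde{\nabla}_X\big(\langle Z,\overline{Y}\rangle\mean\big)$ contributes $-|\mean|^2\langle Z,\overline{Y}\rangle\langle X,\overline{W}\rangle$, again by Weingarten applied to the normal field $\mean$ (the summand $X(\langle Z,\overline{Y}\rangle)\mean$ drops because $\langle\mean,\overline{W}\rangle=0$); and the terms involving $\II(X,\nabla_{\overline{Y}}Z)$ and $\II(\overline{Y},\nabla_XZ)$ vanish by (a) and (b). The second $|\mean|^2$-term comes from the Lie bracket: decomposing $[X,\overline{Y}]$ into its $H(M)$-part and its Reeb component $\theta([X,\overline{Y}])\,T=-i\langle X,\overline{Y}\rangle\,T$ (the coefficient being $-d\theta(X,\overline{Y})$, pinned down by the torsion-purity conventions behind \eqref{e:peter}), only the Reeb piece $-i\langle X,\overline{Y}\rangle\,\widetilde{\nabla}_TZ$ contributes something new, and since $\II(T,Z)=-i\widetilde{\nabla}_Z\mean$ by \eqref{e:iitz1} a further application of Weingarten converts it into $-|\mean|^2\langle X,\overline{Y}\rangle\langle Z,\overline{W}\rangle$. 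All the genuinely intrinsic pieces recombine into $\langle R(X,\overline{Y})Z,\overline{W}\rangle$, which yields \eqref{e:gauss}.

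Part (2) I would obtain directly from the identities already established: pairing $\tau X=i\widetilde{\nabla}_X\overline{\mean}$ (equation \eqref{e:tauz}) with $Z\in\Gamma(T^{1,0}M)$ and invoking the Weingarten equation for the normal field $\overline{\mean}$ gives $\langle\tau X,Z\rangle=i\langle\widetilde{\nabla}_X\overline{\mean},Z\rangle=-i\langle\overline{\mean},\II(X,Z)\rangle=-i\langle\II(X,Z),\overline{\mean}\rangle$, which is \eqref{e:gausstorsion}. One could alternatively extract this from \eqref{e:2curv} applied to the triple $(T,X,Z)$, but going through \eqref{e:tauz} is shorter.

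The main difficulty here is bookkeeping rather than ideas: unlike for a Kähler submanifold, $\II$ is not symmetric and its mixed-type values are nonzero multiples of $\mean$, so one has to pinpoint precisely where the two $|\mean|^2$-terms of \eqref{e:gauss} come from — one from differentiating the hidden summand $\langle Z,\overline{Y}\rangle\mean$ inside $\widetilde{\nabla}_{\overline{Y}}Z$, the other from the Reeb component of $[X,\overline{Y}]$ acting on $Z$ — and to check that every other ``extra'' term vanishes by bidegree or normality. A secondary point needing care is keeping the sign and normalization conventions consistent, in particular that the $T$-coefficient of $[X,\overline{Y}]$ equal $-i\langle X,\overline{Y}\rangle$ (equivalently $d\theta(X,\overline{Y})=i\langle X,\overline{Y}\rangle$, as in \eqref{e:peter}), so that the two curvature-free terms in \eqref{e:gauss} appear with the same sign.
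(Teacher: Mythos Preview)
Your proposal is correct and follows essentially the same route as the paper: specialize \eqref{e:2curv} to $X,Z\in T^{1,0}M$, $\overline{Y}\in T^{0,1}M$, pair with $\overline{W}$, use Weingarten on the two $\widetilde{\nabla}(\II)$ terms, and extract the second $|\mean|^2$-contribution from the Reeb component of $[X,\overline{Y}]$ via \eqref{e:iitz1}; part (2) is likewise obtained from \eqref{e:tauz} plus Weingarten, exactly as in the paper. One small wording slip: in the bracket step the relevant object coming from \eqref{e:2curv} is $\II(T,Z)$, not $\widetilde{\nabla}_TZ$ (the tangential $\nabla_TZ$ has already been absorbed into $R$), but since you immediately invoke $\II(T,Z)=-i\widetilde{\nabla}_Z\mean$ this does not affect the argument.
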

	As briefly discussed in the introduction, the Gauß equation has been extensively used in the study of the CR immersions. In particular, the traceless part of \cref{e:gauss} has been important for the study of the rigidity of CR immersions; see e.g. \cite{webster1979rigidity,ebenfelt2004rigidity,ji2010flatness} and the references therein. We point out that the trace part of \cref{e:gauss} and the equation for the torsion \cref{e:gausstorsion} are important for our proofs of \cref{thm:2} and \cref{thm:umbilichypersurface}.
\begin{proof}[Proof of \cref{prop:ge}]
	The proof of \cref{e:gauss} is similar to that of the Gauß equation for Riemannian immersions, except that the term $\langle \II ([X , \overline{Y}], Z), \Wba \rangle$ does not necessary vanish. 
	Indeed, from \cref{e:2curv} and \cref{prop:wein,prop:basicsff}, we have
	\begin{align}\label{e:tem}
		\langle \widetilde{R}(X,\overline{Y}) Z, \Wba\rangle
		& =
		\langle R(X,\overline{Y}) Z, \Wba\rangle 
		-
		\langle \II([X,\overline{Y}], Z), \Wba \rangle \notag \\
		& \quad 
		+ \langle \widetilde{\nabla}_X (\II(\overline{Y}, Z)) , \Wba \rangle 
		- \langle \widetilde{\nabla}_{\overline{Y}}(\II(X,Z)) , \Wba \rangle \notag \\
		& =
		\langle R(X,\overline{Y}) Z, \Wba\rangle 
		-
		\langle \II([X,\overline{Y}], Z), \Wba \rangle \notag \\
		& \quad 
		- \langle \II(\overline{Y}, Z) , \II(X,\Wba) \rangle 
		+ \langle \II(X,Z) , \II(\overline{Y},\Wba) \rangle \notag \\
		& =
		\langle R(X,\overline{Y}) Z, \Wba\rangle 
		+ \langle \II(X,Z) , \II(\overline{Y},\Wba) \rangle \notag \\
		& \quad 
		- \langle \overline{Y}, Z \rangle \langle X,\Wba \rangle |H|^2 
		- \langle \II([X,\overline{Y}], Z), \Wba \rangle .
	\end{align}
	Since $
		[X , \overline{Y}]
		=
		\nabla_X \overline{Y} - \nabla_{\overline{Y}} X - i \langle X , \overline{Y} \rangle T$ and $\II(\nabla_{X}{\overline{Y}}, Z)$ and $\II(\nabla_{\overline{Y}}X, Z)$ are in the normal bundle, we deduce that
	\begin{align}
		\langle \II ([X , \overline{Y}], Z), \Wba \rangle
		& =
		\langle \II (- i \langle X , \overline{Y} \rangle T,Z) , \Wba \rangle \notag \\
		& =
		- i \langle X , \overline{Y} \rangle \langle \II(T,Z), \Wba\rangle \notag \\
		& = - \langle X , \overline{Y}\rangle \langle \widetilde{\nabla}_Z H , \Wba \rangle \notag \\
		& = \langle X , \overline{Y}\rangle \langle H , \widetilde{\nabla}_Z \Wba \rangle \notag \\
		& = \langle X , \overline{Y}\rangle \langle Z , \Wba \rangle |\mean|^2.
	\end{align}
	Plugging this into \cref{e:tem}, we obtain \cref{e:gauss}.
	
	To prove \cref{e:gausstorsion}, recall that \cref{e:tauz} $ \tau X = i \widetilde{\nabla}_X \overline{H}$ and therefore,
	\begin{align}
		\langle \tau X , Z \rangle 
		& =
		i \langle \widetilde{\nabla}_X \overline{H} , Z\rangle \notag \\
		& =
		i \left(X \cdot \langle \overline{H} , Z \rangle - \langle \overline{H} , \widetilde{\nabla}_X Z \rangle \right) \notag \\ 
		& =
		-i\langle \overline{\mean}, \widetilde{\nabla}_X Z - \nabla_XZ\rangle \notag \\
		& =
		 -i\langle \overline{\mean} , \II(X,Z)\rangle.
	\end{align}
	The proof is complete.
\end{proof}

For each section $Y$ of the normal bundle $N^{1,0}M \oplus N^{0,1}M$, we define the (Weingarten) \textit{shape operator} $A_{Y}$ from $T^{1,0}M \oplus T^{0,1}M$ into itself:
\begin{align}
\langle A_{Y}Z , W \rangle 
=
\langle \II(Z,W) , Y \rangle.
\end{align}
Since $\II$ is not symmetric, $A_Y$ is not symmetric either. However, it has some nice properties. In particular,
\begin{equation}
A_YZ 
= \langle Y , \overline{\mean} \rangle Z,
\quad 
A_{\overline{Y}} \Zba
= \overline{A_Y Z}, \quad Z  \in T^{1,0}M,\ Y\in N^{1,0}M.
\end{equation}
Moreover, $A_{Y}$ maps $T^{1,0}M \oplus T^{0,1}M$ into $T^{1,0}M$ if $Y$ is of type (1,0).
The \textit{normal connection} $D$ on $N^{1,0}M \oplus N^{0,1}M$ is then defined by
\begin{align}
D_ZY
=
\widetilde{\nabla}_Z Y + A_{Y} Z.
\end{align}
Then $D_ZY \in T^{1,0}\mathcal{X}$. Moreover, for $\Wba \in T^{0,1}M$, we have 
\begin{align}
	\langle D_ZY , \Wba \rangle
	& =
	\langle \widetilde{\nabla}_ZY , \Wba \rangle + \langle A_YZ , \Wba \rangle \notag \\
	& =
	Z \cdot \langle Y, \Wba \rangle - \langle Y , \widetilde{\nabla}_Z \Wba \rangle + \langle \II(Z, \Wba) , Y \rangle \notag \\
	& =
	0.
\end{align}
Hence, $D_ZY \in N^{1,0}M$ whenever $Y\in N^{1,0}M$ and $Z \in T^{1,0}M \oplus T^{0,1}M$. By usual arguments, we can show that $D$ is a linear connection on $N^{1,0}M \oplus N^{0,1}M$ which respects the splitting into $(1,0)$ and $(0,1)$ parts of complex vector fields. Furthermore, we can also verify that $D$ is metric, i.e., 
\begin{equation}
	X \cdot \langle Y, \Zba \rangle 
	=
	\langle D_XY , \Zba \rangle + \langle Y , D_X\Zba \rangle, \quad Y \in N^{1,0}M,\ \Zba \in N^{0,1}M.
\end{equation}
Details are left to the readers.
\begin{prop}\label{prop:constantmean}
	It holds that
	\begin{equation}\label{e:dzovh}
	D_{Z} \overline{H} = 0,
	\end{equation}
for all (1,0)-vectors in $T^{1,0}M$. Consequently, if $D_ZH = 0$, then $|H|$ is a constant.
\end{prop}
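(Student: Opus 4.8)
The plan is to show that the normal-connection derivative $D_Z\overline{H}$ is a sum of two torsion terms that cancel. First I would note that $\overline{H}$ is a section of $N^{0,1}M$, being the conjugate of $H\in\Gamma(N^{1,0}M)$, so $D_Z\overline{H}$ makes sense for $Z\in T^{1,0}M$ and, directly from the definition of the normal connection,
\begin{equation*}
D_Z\overline{H}=\widetilde{\nabla}_Z\overline{H}+A_{\overline{H}}Z .
\end{equation*}
For the first summand I would use \cref{e:tauz}, which reads $\tau Z=i\,\widetilde{\nabla}_Z\overline{H}$ and hence $\widetilde{\nabla}_Z\overline{H}=-i\,\tau Z$. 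For the second summand I would pair against an arbitrary $W\in T^{1,0}M$: by the definition of the shape operator and the torsion Gauß equation \cref{e:gausstorsion},
\begin{equation*}
\langle A_{\overline{H}}Z,W\rangle=\langle\II(Z,W),\overline{H}\rangle=i\,\langle\tau Z,W\rangle .
\end{equation*}
Since both $A_{\overline{H}}Z$ and $\tau Z$ lie in $T^{0,1}M$ and the Levi pairing between $T^{1,0}M$ and $T^{0,1}M$ is nondegenerate, this forces $A_{\overline{H}}Z=i\,\tau Z$, and adding the two summands yields $D_Z\overline{H}=-i\,\tau Z+i\,\tau Z=0$, which is \cref{e:dzovh}.

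For the consequence, assume $D_ZH=0$ for all $Z\in T^{1,0}M$. Because $D$ is a metric connection, for such $Z$ one has
\begin{equation*}
Z\,|H|^2=Z\langle H,\overline{H}\rangle=\langle D_ZH,\overline{H}\rangle+\langle H,D_Z\overline{H}\rangle=0 ,
\end{equation*}
using the hypothesis together with \cref{e:dzovh}. As $|H|^2$ is real-valued, conjugating gives $\overline{Z}\,|H|^2=0$ for every $\overline{Z}\in T^{0,1}M$ as well, so $d(|H|^2)$ annihilates $H(M)$; equivalently $d(|H|^2)=(T|H|^2)\,\theta$. Differentiating once more and restricting to $H(M)$, the $\theta$-terms drop out and one is left with $(T|H|^2)\,d\theta|_{H(M)}=0$; since $d\theta$ is nondegenerate on $H(M)$ this forces $T|H|^2=0$, whence $d(|H|^2)\equiv 0$ and $|H|$ is constant on each component of $M$.

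The computation itself is short; the only thing needing care is the bookkeeping of the factors of $i$ and the type decomposition, so that the two torsion contributions come out with opposite signs, and — for the last statement — the remark that once the horizontal derivatives of $|H|^2$ vanish the Reeb derivative is automatically killed by $d^2=0$ and the nondegeneracy of $d\theta$ on $H(M)$, so no separate control of $D_TH$ (which is not even defined) is required.
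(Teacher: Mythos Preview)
Your argument is correct and follows essentially the same route as the paper. The only difference is cosmetic: the paper observes once and for all that $D_ZN=(\widetilde{\nabla}_ZN)^{\perp}$ for any normal $N$, so from $\widetilde{\nabla}_Z\overline{H}=-i\,\tau Z\in T^{0,1}M$ it concludes $D_Z\overline{H}=0$ in one stroke, without separately computing $A_{\overline{H}}Z$ via \cref{e:gausstorsion}; your explicit cancellation of the two torsion terms is the same computation unpacked, and your treatment of the constancy of $|H|^2$ just spells out the standard fact (which the paper quotes) that a real-valued CR function is constant.
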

\begin{proof}
It follows from \cref{e:tauz} that $\widetilde{\nabla}_Z \overline{H} = -i\tau Z$ is tangent to $M$. Therefore, $D_{Z} \overline{H} = (\widetilde{\nabla}_Z \overline{H})^{\perp} = 0$. Moreover, if $D_ZH = 0$ for all (1,0) tangent vector $Z$, then
\begin{align}
Z \cdot |H|^2
=
\langle D_Z H , \overline{H} \rangle 
+
\langle H, D_Z \overline{H} \rangle 
=
0,
\end{align}
and thus $|H|^2$ is an real-valued anti CR function, hence constant.
\end{proof}

Using the normal connection $D$, we can rewrite \cref{e:2curv} as follows
\begin{align}\label{e:normalcur}
\widetilde{R}(X, \overline{Y}) Z
&=
{R}(X, \overline{Y}) Z
+
A_{\II(X,Z)} \overline{Y} - A_{\II(\overline{Y}, Z)} X \notag \\
& \qquad +
(D_{X} \II) (\overline{Y}, Z) - (D_{\overline{Y}} \II) (X, Z) + \II(\mathbb{T}_{\nabla}(X, \overline{Y}), Z).
\end{align}
Here 
\begin{equation}
	(D_X\II)(Y,Z) = D_X(\II(Y,Z)) - \II(\nabla_X Y , Z) - \II(Y, \nabla_XZ),
\end{equation}
for $X,Y,Z$ are sections of $T^{1,0}M \oplus T^{0,1}M$.
\begin{prop}[Codazzi-Mainardi equation]\label{prop:codazzi-mainardi}
	If $\iota \colon (M, \theta) \hookrightarrow (\mathcal{X},\omega)$ is a pseudohermitian submanifold and $\omega$ is Kähler, then the normal component of the curvature is
\begin{align}\label{e:normal2}
	(\widetilde{R}(X, \overline{Y}) Z)^{\perp}
	=
	- (D_{\overline{Y}} \II) (X, Z) + \langle \overline{Y}, Z \rangle D_X H + 
	\langle X, \overline{Y} \rangle D_Z H.
\end{align}
\end{prop}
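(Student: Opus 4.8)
The plan is to obtain \cref{e:normal2} simply by projecting the already-derived identity \cref{e:normalcur} onto the normal bundle $N^{1,0}M \oplus N^{0,1}M$ and then simplifying the terms that survive. Writing $(\cdot)^{\perp}$ for the normal projection, I claim that of the six terms on the right-hand side of \cref{e:normalcur} the first three are tangential and drop out: $R(X,\overline{Y})Z$ is the Tanaka--Webster curvature applied to a $(1,0)$-vector, hence again lies in $T^{1,0}M$ because $\nabla$ is a connection on $\mathbb{C}TM$ preserving types; and by \cref{prop:basicsff} together with \cref{e:sffbu} the sections $\II(X,Z)$ and $\II(\overline{Y},Z)=\langle \overline{Y},Z\rangle H$ are of type $(1,0)$, so the shape operators $A_{\II(X,Z)}\overline{Y}$ and $A_{\II(\overline{Y},Z)}X$ take values in $T^{1,0}M$ (recall $A_{Y}$ maps into $T^{1,0}M$ when $Y$ is of type $(1,0)$). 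The remaining three terms $(D_X\II)(\overline{Y},Z)$, $(D_{\overline{Y}}\II)(X,Z)$ and $\II(\mathbb{T}_{\nabla}(X,\overline{Y}),Z)$ are already normal — the first two since $D$ preserves the normal bundle and $\II$ is normal-valued, the third since $\II$ is normal-valued — so that $(\widetilde{R}(X,\overline{Y})Z)^{\perp} = (D_X\II)(\overline{Y},Z) - (D_{\overline{Y}}\II)(X,Z) + \II(\mathbb{T}_{\nabla}(X,\overline{Y}),Z)$, and it then only remains to rewrite the first and third summands.

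For $(D_X\II)(\overline{Y},Z)$ I would substitute $\II(\overline{Y},Z)=\langle\overline{Y},Z\rangle H$ from \cref{e:sffbu}, and likewise $\II(\nabla_X\overline{Y},Z)=\langle\nabla_X\overline{Y},Z\rangle H$ and $\II(\overline{Y},\nabla_X Z)=\langle\overline{Y},\nabla_X Z\rangle H$, the Tanaka--Webster connection preserving the type of each of its arguments. Expanding $(D_X\II)(\overline{Y},Z)=D_X(\II(\overline{Y},Z))-\II(\nabla_X\overline{Y},Z)-\II(\overline{Y},\nabla_X Z)$ by the Leibniz rule and using that $\langle\cdot,\cdot\rangle$ is $\nabla$-parallel, the scalar-derivative contributions cancel and one is left with $(D_X\II)(\overline{Y},Z)=\langle\overline{Y},Z\rangle D_X H$. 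For the torsion term I would use purity of the Tanaka--Webster torsion, $\mathbb{T}_{\nabla}(X,\overline{Y})=d\theta(X,\overline{Y})\,T$, a multiple of the Reeb field, feed this into $\II(\cdot,Z)$, invoke \cref{e:iitz1} to replace $\II(T,Z)$ by $-i\widetilde{\nabla}_Z H$, and finally use $(\widetilde{\nabla}_Z H)^{\perp}=D_Z H$ — valid because $D_Z H = \widetilde{\nabla}_Z H + A_H Z$ with $A_H Z$ tangential — so that $\II(\mathbb{T}_{\nabla}(X,\overline{Y}),Z)=\langle X,\overline{Y}\rangle D_Z H$ once the constants are combined. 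Substituting these two evaluations into the displayed identity yields \cref{e:normal2}.

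I do not anticipate a genuine obstacle: the argument is essentially type bookkeeping built on \cref{e:normalcur}. The one step that needs care is the torsion term, where the purity relation $\mathbb{T}_{\nabla}(X,\overline{Y})=d\theta(X,\overline{Y})T=i\langle X,\overline{Y}\rangle T$ must be threaded correctly through \cref{e:iitz1} with the right signs, and where one must keep in mind the asymmetry already recorded in \cref{prop:constantmean}: $\widetilde{\nabla}_Z\overline{H}$ is tangential (so $D_Z\overline{H}=0$) whereas $\widetilde{\nabla}_Z H$ is not, its normal part $D_Z H$ being precisely what appears in the answer. A subsidiary point requiring verification is the claim that $R(X,\overline{Y})Z$ and both shape-operator terms are tangential, so that the normal projection genuinely isolates exactly three terms of \cref{e:normalcur}.
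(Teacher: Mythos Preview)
Your approach is essentially the paper's: project \cref{e:normalcur} to the normal bundle, then simplify $(D_X\II)(\overline{Y},Z)$ via \cref{e:sffbu} and the torsion term via purity and \cref{e:iitz1}. There is, however, one genuine slip. You assert that $\II(\mathbb{T}_{\nabla}(X,\overline{Y}),Z)$ is ``already normal, since $\II$ is normal-valued.'' That is false: \cref{prop:basicsff} guarantees $\II$ takes normal values only when both arguments lie in $T^{1,0}M\oplus T^{0,1}M$, whereas $\mathbb{T}_{\nabla}(X,\overline{Y})=i\langle X,\overline{Y}\rangle T$ is a multiple of the Reeb field. Indeed, from \cref{e:iitz1} one has $\II(T,Z)=-i\widetilde{\nabla}_Z H$, and since $\widetilde{\nabla}_Z H = D_Z H - A_H Z$ with $A_H Z = |H|^2 Z \in T^{1,0}M$ nonzero in general, this vector is \emph{not} normal. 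Consequently your intermediate identity
\[
(\widetilde{R}(X,\overline{Y})Z)^{\perp} = (D_X\II)(\overline{Y},Z) - (D_{\overline{Y}}\II)(X,Z) + \II(\mathbb{T}_{\nabla}(X,\overline{Y}),Z)
\]
is incorrect as written; the last term needs its own $(\cdot)^{\perp}$. You effectively repair this in the next paragraph when you invoke $(\widetilde{\nabla}_Z H)^{\perp}=D_Z H$, so the final conclusion $\II(\mathbb{T}_{\nabla}(X,\overline{Y}),Z)^{\perp}=\langle X,\overline{Y}\rangle D_Z H$ is right --- but the logic should be stated consistently. The paper handles this exactly so: it computes $\II(\mathbb{T}_{\nabla}(X,\overline{Y}),Z)=\langle X,\overline{Y}\rangle\widetilde{\nabla}_Z H$ first and \emph{then} projects. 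With that correction your argument matches the paper's.
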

\begin{proof}
 	From the fact that $\mathbb{T}_{\nabla}$ is pure and \cref{e:iitz1}, we have
	\begin{align}
		\II(\mathbb{T}_{\nabla}(X, \overline{Y}), Z)
		=
		i\langle X, \overline{Y} \rangle \II(T, Z)
		=
		\langle X, \overline{Y} \rangle \widetilde{\nabla}_Z H.
	\end{align}
	Taking the normal components, 
	\begin{align}
		\II(\mathbb{T}_{\nabla}(X, \overline{Y}), Z)^{\perp}
		=
		\langle X, \overline{Y} \rangle D_Z H.
	\end{align}
	On the other hand, since $\II(\overline{Y}, Z) = \langle \overline{Y}, Z \rangle \mean$,
	\begin{align}
	(D_{X} \II) (\overline{Y}, Z)
	&=
	D_X (\langle \overline{Y} , Z \rangle H ) - \II(\nabla_X \overline{Y} , Z) - \II (\overline{Y} , \nabla_X Z)	\notag \\
	&=
	\langle \overline{Y} , Z \rangle D_X H.
	\end{align}
	Here we used the fact that the connection $D$ is metric. 
	
	Then \cref{e:normal2} follows from taking the normal component of \cref{e:normalcur}.
\end{proof}
Take an orthogonal coframe $\{\theta^{\alpha}\}$ of $(T^{1,0}M)^{\ast}$ and its conjugate $\{\theta^{\bba}: = \overline{\theta^{\beta}}\}$. Thus, $\{\theta^{\alpha}, \theta^{\bba}, \theta\}$ is an orthonormal coframe of the complexified cotangent bundle $(\mathbb{C}TM)^{\ast}$. The dual frame will be denoted by $\{Z_{\alpha}, Z_{\bba}, Z_0 = T\}$. In this frame, the \textit{pseudohermitian curvature tensor} has components
\begin{equation}
R_{\alpha\bba\gamma\sba}
=
\left\langle \nabla_{\alpha}\nabla_{\bba} Z_{\gamma} - \nabla_{\bba}\nabla_{\alpha} Z_{\gamma} - \nabla_{[Z_{\alpha},Z_{\bba}]} Z_{\gamma} , Z_{\sba} \right\rangle.
\end{equation}
The Ricci tensor is $R_{\alpha\bba} = R_{\alpha}^{\sba}{}_{\bba}{}_{\sba}$ and the Ricci $(1,1)$-form, denoted by $\Ric$, is a $(1,1)$-form on $T^{0,1}M \oplus T^{1,0}M$ which agrees with $iR_{\alpha\bba}\theta^{\alpha}\wedge\theta^{\bar{\beta}}$ when restricted to $T^{1,0}M\oplus T^{0,1}M$. Unlike its Kähler counterpart, the pseudohermitian Ricci form does not necessary extend to a closed $(1,1)$-form. The components of the pseudohermitian torsion $\tau$ are denoted by~$A_{\alpha\beta}$. Precisely,
\begin{equation}
	A_{\alpha\beta} : = \langle \tau Z_{\alpha} , Z_{\beta} \rangle.
\end{equation}
It is well-known that $A_{\alpha\beta}$ is symmetric, i.e., $A_{\alpha\beta} = A_{\beta\alpha}$ (see, e.g., \cite{webster1978pseudo,lee1988pseudo}).

We denote by $\omega_{\alpha\gamma}^a$ the components of the ``holomorphic'' part of the second fundamental form $\II$, i.e.,
\begin{equation}
	\II(Z_{\alpha},Z_{\gamma}) 
	=
	\omega_{\alpha\gamma}^a Z_a,
\end{equation}
where we sum over the lowercase Latin index which runs from $n+1$ to $N$. Here $\{Z_1,\dots,Z_n\}$ is an orthonormal frame for $T^{1,0}M$ and $\{Z_1, \dots ,Z_{N}\}$ is an orthonormal frame for $T^{1,0}\mathcal{X}$. Then Gauß equation takes the following form:
\begin{equation}
	\widetilde{R}_{\alpha\bba\gamma\sba}
	=
	{R}_{\alpha\bba\gamma\sba}
	+
	\omega_{\alpha\gamma}^a \omega_{\bba\sba}^{\bar{b}} h_{a\bar{b}}
	-
	|\mean |^2\left(h_{\alpha\bba} h_{\gamma\sba} + h_{\alpha\sba} h _{\gamma\bba}\right).
\end{equation} 
Moreover,
\begin{equation}
	A_{\alpha\beta} = -i \omega^{a}_{\alpha\beta} \mean ^{\bar{b}} h_{a\bar{b}}.
\end{equation}
We obtain the following CR analogues of the well-known inequalities for isometric Riemannian immersions into the real euclidean space.
\begin{cor}\label{cor:riclower} Let $(M,\theta) \hookrightarrow (\mathbb{C}^N, \omega: = i\partial\bar{\partial} \|Z\|^2)$ be a semi-isometric CR immersion. Let $\Ric$ and $R$ be the Ricci form and the Webster scalar curvature, respectively. Then
\begin{equation}
	\Ric \leq (n+1) |\mean |^2(\iota^{\ast}\omega)|_{H(M)}\ \text{and}\
	R \leq n(n+1) |\mean |^2.
\end{equation}
The equality in each of them occurs iff $M$ is totally umbilical.
\end{cor}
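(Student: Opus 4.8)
The plan is to read off both inequalities from the component form of the Gau{\ss} equation (\cref{prop:ge}) together with the flatness of the ambient space. First I would observe that for $\mathcal{X}=\mathbb{C}^N$ with $\omega=i\partial\bar\partial\|Z\|^2$ the Chern connection $\widetilde{\nabla}$ is the standard flat connection, so $\widetilde{R}\equiv 0$; the component Gau{\ss} equation displayed above then reduces to
\[
R_{\alpha\bba\gamma\sba}
=
|\mean|^2\bigl(h_{\alpha\bba}h_{\gamma\sba}+h_{\alpha\sba}h_{\gamma\bba}\bigr)
-\omega^a_{\alpha\gamma}\,\omega^{\bar b}_{\bba\sba}\,h_{a\bar b}.
\]
Working in a frame $\{Z_\alpha\}$ that is orthonormal for the Levi metric (so $h_{\alpha\bba}=\delta_{\alpha\beta}$) and contracting over the holomorphic index pair $\gamma=\sigma$, I obtain the Webster--Ricci components
\[
R_{\alpha\bba}
=(n+1)\,|\mean|^2\,h_{\alpha\bba}-S_{\alpha\bba},
\qquad
S_{\alpha\bba}:=\sum_{\gamma}\omega^a_{\alpha\gamma}\,\omega^{\bar b}_{\bba\cba}\,h_{a\bar b}.
\]

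The decisive point is that $S_{\alpha\bba}$ is a positive semidefinite Hermitian form: for $\xi=\sum_\alpha\xi^\alpha Z_\alpha\in T^{1,0}M$ one has $S_{\alpha\bba}\xi^\alpha\bar\xi^\beta=\sum_{\gamma}|\II(\xi,Z_\gamma)|_\omega^2\ge 0$, since the ambient metric $(h_{a\bar b})$ is positive definite. Hence $R_{\alpha\bba}\le(n+1)|\mean|^2 h_{\alpha\bba}$ as Hermitian forms, and passing to the associated $(1,1)$-forms — using $(\iota^{\ast}\omega)|_{H(M)}=d\theta|_{H(M)}$ and the normalization $d\theta|_{H(M)}=i\,h_{\alpha\bba}\theta^\alpha\wedge\theta^{\bba}$ for the chosen coframe — this is exactly $\Ric\le(n+1)|\mean|^2(\iota^{\ast}\omega)|_{H(M)}$. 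Taking a further trace over $\alpha=\beta$ yields $R=n(n+1)|\mean|^2-\sum_{\alpha,\gamma}|\II(Z_\alpha,Z_\gamma)|_\omega^2\le n(n+1)|\mean|^2$, the second inequality.

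For the equality statement, equality in either inequality (pointwise, hence everywhere under the form interpretation) forces $S_{\alpha\bba}\equiv 0$, hence $\omega^a_{\alpha\gamma}\equiv 0$, i.e. $\II(Z,W)=0$ for all $(1,0)$-vectors $Z,W$ tangent to $M$; by the characterization of $\II^{\circ}$ recalled in the introduction this is precisely total pseudohermitian umbilicity, and the converse implication is immediate. The only genuine bookkeeping step — and the nearest thing to an obstacle here — is to keep the normalizations of $\Ric$ and of $\iota^{\ast}\omega=d\theta$ consistent in the chosen orthonormal coframe so that the matrix inequality translates verbatim into the stated inequality of $(1,1)$-forms; once the convention $d\theta=i\,\theta^\alpha\wedge\theta^{\bba}$ is fixed this is routine.
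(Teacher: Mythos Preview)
Your proposal is correct and matches the paper's approach: the corollary is stated immediately after the component form of the Gau{\ss} equation with no explicit proof, and your argument---setting $\widetilde{R}=0$, tracing to get $R_{\alpha\bba}=(n+1)|\mean|^2 h_{\alpha\bba}-S_{\alpha\bba}$ with $S_{\alpha\bba}\geq 0$, then tracing again for the scalar inequality, and reading off the equality case as $\omega^a_{\alpha\gamma}\equiv 0$---is exactly the routine computation the paper leaves to the reader.
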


\subsection{CR immersions into the sphere}
Let $\iota \colon \mathbb{S}^{2n+1} \subset \mathbb{C}^{n+1}$ be the standard embedding of the standard CR sphere into the complex space. Let $\II$ be the corresponding second fundamental form and $\mean_{\mathbb{S}^{2n+1}}$ the (1,0)-mean curvature field. Then
\begin{equation}\label{e:tc}
	\II (Z,W) = 0.
\end{equation}
Indeed, take the standard coordinates $(z^1, \dots , z^n , z^{n+1} = w)$ on $\mathbb{C}^{n+1}$ and $\rho : = \|Z\|^2 - 1$. Then $\rho$ is a defining function for the sphere and the 
standard pseudohermitian structure is $\iota^{\ast}(i\bar{\partial} \rho)$.
Clearly, 
\begin{equation}
	d\theta = \iota^{\ast}(i\partial\bar{\partial}\rho) = \iota^{\ast} \omega.
\end{equation}
Hence, the inclusion $\iota$ is semi-isometric.

A basis for $(1,0)$-vectors on $\mathbb{S}^{2n+1}$ is the restrictions onto $\mathbb{S}^{2n+1}$ of 
\begin{equation}
	Z_{\alpha} : = \partial_\alpha - (\zba^{\alpha}/\wba) \partial_w, \quad \alpha = 1,2,\dots , n,
\end{equation}
at points where $\rho_w \ne 0$.

If $\widetilde{\nabla}$ is the Chern connection of $\mathbb{C}^{n+1}$, then 
\begin{equation}\label{e:chern}
	\widetilde{\nabla}_{Z_{\alpha}} Z_{\beta} = \widetilde{\nabla}_{Z_{\alpha}} \partial_\beta - Z_{\alpha} \left(\frac{\zba^{\beta}}{\wba}\right) \partial_w - \left(\frac{\zba^{\beta}}{\wba}\right) \widetilde{\nabla}_{Z_{\alpha}} \partial_w
	=
	0.
\end{equation} 
On the other hand, if $\nabla$ is the Tanaka-Webster connection on $\mathbb{S}^{2n+1}$ and $\omega_{\beta}{}^{\gamma}$'s are the connection forms associated to the chosen frame, then by \cite{li--luk} (eq. \cref{e:cf} below)
\begin{equation}
	\omega_{\beta}{}^{\gamma}(Z_{\alpha}) 
	=
	h^{\gamma\bar{\mu}} Z_{\alpha} h_{\beta\bar{\mu}} - \xi_{\beta} \delta_{\alpha}^{\gamma},
\end{equation} 
where $h_{\beta\bar{\mu}}$ is the Levi-matrix:
\begin{equation}
	h_{\beta\bar{\mu}} = \delta_{\beta\mu} + \frac{\zba^{\beta} z^{\mu}}{|w|^2},
	\quad
	h^{\gamma\bar{\mu}}
	=
	\delta_{\gamma\mu} - z^{\gamma}\zba^{\mu}.
\end{equation}
Consequently
\begin{equation}
	\omega_{\beta}{}^{\gamma}(Z_{\alpha}) 
	=
	h^{\gamma\bar{\mu}} Z_{\alpha} h_{\beta\bar{\mu}}
	- \xi_{\beta} \delta_{\alpha}^{\gamma}
	=
	\frac{1}{|w|^2} \zba^{\beta} \delta_{\alpha}^{\gamma}
	-
	\left(\delta_{\beta\sigma} + \frac{\zba^{\beta} z^{\sigma}}{|w|^2}\right) \zba^{\sigma} \delta_{\alpha}^{\gamma}
	=
	0.
\end{equation}
This and \cref{e:chern} imply that $\II(Z_{\alpha}, Z_{\beta}) = 0$, as desired.
\begin{prop}\label{prop:2sff}
	Let $M$ be a strictly pseudoconvex CR manifold and $\phi \colon M \to \mathbb{S}^{2N-1}$ a CR immersion. Let $\iota \colon \mathbb{S}^{2N - 1} \to \mathbb{C}^{N}$ be the standard inclusion. Put
	\begin{equation}
		F: = \iota \circ \phi ,
		\quad
		\theta = F^{\ast} \Theta,
		\quad
		\omega : = i\partial\bar{\partial} \|Z\|^2.
	\end{equation} 
	Then $F \colon (M,\theta) \to (\mathbb{C}^{N}, \omega)$ is a semi-isometric CR immersion. Moreover, if $\II_{M}^{C\!R}$ is the CR second fundamental form of $\phi$ for any admissible pair $(\theta,\hat{\theta})$, then 
	\begin{equation}
		\II_{M}^{C\!R}(Z,W) = \II_{M}^F(Z,W),
	\end{equation}
	for every pair $Z,W$ in $T^{1,0}M$.
\end{prop}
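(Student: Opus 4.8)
The plan is to treat the two assertions in turn, deriving the identity between the two second fundamental forms from a simple telescoping of connections along the tower $M \subset \mathbb{S}^{2N-1} \subset \mathbb{C}^N$, combined with the vanishing \cref{e:tc} of the second fundamental form of the round sphere.

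For the first assertion, $F = \iota\circ\phi$ is a CR immersion as a composition of CR immersions, and since $d\Theta = \iota^{\ast}(i\partial\bar{\partial}\|Z\|^2)$ one has
\[
d\theta = \phi^{\ast}(d\Theta) = \phi^{\ast}\bigl(\iota^{\ast}(i\partial\bar{\partial}\|Z\|^2)\bigr) = F^{\ast}\omega ,
\]
so $F$ is semi-isometric and $(\theta,\Theta)$ is an admissible pair for $\phi$.

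For the identity, I fix $p\in M$, work locally, and identify $M$ with $\phi(M) \subset \mathbb{S}^{2N-1} \subset \mathbb{C}^N$. Denote by $\widetilde{\nabla}$ the Chern connection of $\mathbb{C}^N$, by $\nabla'$ the Tanaka--Webster connection of $(\mathbb{S}^{2N-1},\Theta)$, and by $\nabla$ the Tanaka--Webster connection of $(M,\theta)$. Given $Z,W \in \Gamma(T^{1,0}M)$, I extend $W$ first to a local section of $T^{1,0}\mathbb{S}^{2N-1}$ and then to a local section of $T^{1,0}\mathbb{C}^N$; since $Z$ is tangent to $M$, the values $\widetilde{\nabla}_Z W$ and $\nabla'_Z W$ do not depend on these extensions. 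Then, by the definition \cref{e:2.5a} of the $F$-second fundamental form and a telescoping,
\[
\II_M^F(Z,W) \;=\; \widetilde{\nabla}_Z W - \nabla_Z W \;=\; \bigl(\widetilde{\nabla}_Z W - \nabla'_Z W\bigr) + \bigl(\nabla'_Z W - \nabla_Z W\bigr).
\]
The first bracket is, again by \cref{e:2.5a}, the second fundamental form of the semi-isometric inclusion $\iota\colon \mathbb{S}^{2N-1}\hookrightarrow\mathbb{C}^N$ evaluated on $(1,0)$-vectors, which vanishes by \cref{e:tc}; the second bracket is, by definition, the CR second fundamental form $\II_M^{CR}(Z,W)$ of $\phi$ with respect to the admissible pair $(\theta,\Theta)$. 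Hence $\II_M^F(Z,W) = \II_M^{CR}(Z,W)$ for this pair.

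To obtain the statement for an arbitrary admissible pair $(\theta,\hat\theta)$, I would invoke the standard fact (see \cite{webster1979rigidity,ebenfelt2004rigidity}) that the restriction of the CR second fundamental form of a CR immersion to $T^{1,0}M \times T^{1,0}M$ does not depend on the choice of admissible pair, so that $\II_M^{CR}(Z,W)$ is unchanged when $\Theta$ is replaced by $\hat\theta$. I expect the only point requiring care to be the bookkeeping in the telescoping step — in particular, choosing the two successive extensions of $W$ compatibly and using that $\nabla'_Z W$ depends only on $W|_M$ because $Z$ is tangent to $M$ — but this is routine given the definition \cref{e:2.5a} together with the computation preceding the proposition that yields \cref{e:tc}.
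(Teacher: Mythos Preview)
Your telescoping argument and use of \cref{e:tc} are exactly the core of the paper's proof; the difference lies in how the two arguments handle the phrase ``for any admissible pair.''

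There is a gap in your treatment: the pair $(\theta,\Theta)$ need not be admissible. Admissibility in the sense of \cite{ebenfelt2004rigidity} (as recalled in the paper's proof) requires not only $\theta=\phi^{\ast}\Theta$ but also that the Reeb field of $\Theta$ be tangent to $\phi(M)$ along the image. The latter condition is not automatic for a general CR immersion $\phi$, so your assertion ``$(\theta,\Theta)$ is an admissible pair for $\phi$'' is unjustified. Consequently, the bracket $\nabla'_ZW-\nabla_ZW$ in your telescoping is not, a priori, the CR second fundamental form for any admissible pair, and invoking ``independence of the choice of admissible pair'' does not close the gap: that independence is stated among admissible pairs, and you have not exhibited one that equals $(\theta,\Theta)$.

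The paper fills this gap by a direct computation using Lee's formula for the Tanaka--Webster connection under a pseudoconformal change. Starting from an \emph{arbitrary} admissible pair $(\theta',\hat\theta)$ with $\hat\theta=e^{u}\Theta$ (hence $\theta'=e^{u}\theta$), one has, for $Z,W\in T^{1,0}$,
\[
\nabla^{\hat\theta}_ZW=\nabla^{\Theta}_ZW+Z(u)W+W(u)Z,\qquad
\nabla^{\theta'}_ZW=\nabla^{\theta}_ZW+Z(u)W+W(u)Z,
\]
so the extra terms cancel and $\II_M^{CR}(Z,W)=\nabla^{\Theta}_ZW-\nabla^{\theta}_ZW$. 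At this point your telescoping (equivalently, \cref{e:tc}) finishes the argument. Thus the missing ingredient in your proposal is precisely this conformal-change step, which shows that $\nabla^{\hat\theta}_ZW-\nabla^{\phi^{\ast}\hat\theta}_ZW$ is independent of $\hat\theta$ on $(1,0)$-vectors even without the Reeb-tangency condition.
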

\begin{proof} Suppose that $(\theta', \hat{\theta})$ is an admissible pair, in the sense of \cite{ebenfelt2004rigidity}, of pseudohermitian structures for the CR immersion $\phi \colon M \to \mathbb{S}^{2N - 1}$. This means that $\theta' = \phi^{\ast}\hat{\theta}$ and the Reeb vector field of $\hat{\theta}$ is tangent to $\phi(M)$ along the image. The CR second fundamental form is given by
	\begin{equation}
	\II_M^{C\!R} (Z,W)
	: = \nabla^{\hat{\theta}}_{Z} W - \nabla^{\theta}_{Z} W ,
	\end{equation}
	for all $(1,0)$-vector fields $Z, W$ tangent to $M$ and extended smoothly to $(1,0)$-vector fields of $\mathbb{S}^{2N-1}$. Suppose that $\hat{\theta} = e^u \Theta$. Then $\theta = e^{-u}\theta'$. By Lee's formula for pseudo-conformal change of contact forms (see, e.g., \cite{dragomir--tomassini}),
	\begin{equation}
	\nabla^{\hat{\theta}}_{Z} W = \nabla^{\Theta}_{Z} W + Z(u) W + W(u) Z,
	\end{equation}
	and a similar identity holds on $M$. Using \cref{e:tc}, we obtain
	\begin{align}
	\II_M^{C\!R}(Z,W)
	& = 
	\nabla^{\Theta}_ZW - \nabla^{\theta}_ZW \notag \\
	& =
	\widetilde{\nabla}_ZW - \nabla^{\theta_0}_ZW \notag \\
	& = \II_{M}(Z,W).
	\end{align}
	Here we identify $Z$ and $W$ with their push-forwards via $F$ and $\phi$. In particular, the CR second fundamental form $\II_M^{C\!R}(Z,W)$ agrees with the holomorphic part of $\II$ and can be computed using any pair $\hat{\theta}$ and $\theta': = F^{\ast} \hat{\theta}$.
\end{proof}
\begin{example}\label{ex:elip}
 Let $(E(A), \theta)$ be a real ellipsoid defined by $\rho: = \|z\|^2 - \Re (Az \cdot (Az)^t) - 1 = 0$ and $\theta = \iota^{\ast} (i\bar{\partial} \rho)$, where $A = (A_1,A_2,\dots ,A_{n+1})$ are real numbers. Then the inclusion $\iota$ is a semi-isometric embedding of $(E(A), \theta)$ into $\mathbb{C}^{n+1}$ with euclidean metric. Observe that $E(A)$ also admits a CR immersion into $\mathbb{S}^{2n+3}$, but not into $\mathbb{S}^{2n+1}$ in general, and so $\iota$ does \textit{not} realize an immersion in a sphere.
\end{example}

\subsection{The transverse curvature of a level set of a Kähler potential}\label{sec:transverse}
\begin{prop}\label{prop:kahlerpotential}
Let $M \subset \mathbb{C}^{n+1}$ be a strictly pseudoconvex real hypersurface defined by $\rho = 0$ with $d\rho \ne 0$ and let $\theta = \iota^{\ast} (i\bar{\partial} \rho)$. Assume that $F \colon (M,\theta) \to (\mathcal{X},\omega)$ is a semi-isometric CR immersion and $\varphi$ is a local Kähler potential for $\omega$ on a neighborhood of $F(p) \in \mathcal{X}$ ($p\in M$). Then there is a CR function $G$ in a neighborhood of $p$ such that $\varphi \circ F = \Re G$.
\end{prop}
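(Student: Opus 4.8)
The plan is to show that $\varphi \circ F$ is a \emph{pluriharmonic}-type function on $M$, in the sense that it is the real part of a CR function; equivalently, that its complexified differential $\bar\partial_b(\varphi\circ F)$ is itself $\bar\partial_b$ of a CR function, or more directly that $\partial_b\bar\partial_b(\varphi\circ F) = 0$ on $M$ in a suitable sense. The key identity to exploit is the semi-isometry condition $d\theta = F^{\ast}\omega$. Since $\omega = i\partial\bar\partial\varphi$ locally and $F$ is a CR map, pulling back gives $F^{\ast}\omega = F^{\ast}(i\partial\bar\partial\varphi) = i\partial_b\bar\partial_b(\varphi\circ F)$ on $M$, where the last equality uses that $F$ intertwines the ambient $\bar\partial$ with $\bar\partial_b$ on $M$ (CR-ness) and similarly for $\partial$. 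On the other hand $\theta = \iota^{\ast}(i\bar\partial\rho)$, so $d\theta = \iota^{\ast}(i\partial\bar\partial\rho) = i\partial_b\bar\partial_b(\iota^{\ast}\Re\,(\text{something}))$; more to the point, $\theta$ is itself of the form $i\bar\partial u - i\partial u$ type restricted from an ambient potential, and one computes that in the adapted frame the $(1,1)$-part of $d\theta$ on $H(M)$ is precisely the Levi form $h_{\alpha\bar\beta}\theta^{\alpha}\wedge\theta^{\bar\beta}$ (up to the usual factor).

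Concretely, I would first choose, near $p$, an admissible coframe $\{\theta,\theta^{\alpha},\theta^{\bar\alpha}\}$ and write $\bar\partial_b(\varphi\circ F) = u_{\bar\alpha}\theta^{\bar\alpha}$ for smooth functions $u_{\bar\alpha}$. Then the semi-isometry equation, decomposed by bidegree against this coframe, reads in its $\theta^{\alpha}\wedge\theta^{\bar\beta}$ component as $(\varphi\circ F)_{\alpha\bar\beta} = h_{\alpha\bar\beta}$ where $(\cdot)_{\alpha\bar\beta}$ denotes the appropriate second-order Tanaka--Webster-type derivative, and in its $\theta^{\alpha}\wedge\theta^{\bar\beta}$-free components (the $\theta\wedge\theta^{\alpha}$ and $\theta^{\alpha}\wedge\theta^{\beta}$ parts) as $(\varphi\circ F)_{\alpha\beta}=0$ and a transversal relation. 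But $\theta = i\bar\partial_b\psi_0 - \cdots$ — rather, the cleanest route: set $G := (\varphi\circ F) + i\,(\text{harmonic conjugate})$ and show directly that $\bar\partial_b G = 0$. Writing $G = \varphi\circ F + iv$ for an unknown real $v$, the CR condition $Z_{\bar\alpha}G = 0$ becomes $Z_{\bar\alpha}v = -i\,Z_{\bar\alpha}(\varphi\circ F)$ for all $\alpha$; the integrability condition for solving this for a real $v$ is exactly that the right-hand side, call it $\beta_{\bar\alpha}$, satisfies $Z_{[\bar\alpha}\beta_{\bar\beta]} + (\text{torsion terms}) = 0$ and that $\sum Z_{\alpha}\overline{\beta_{\bar\alpha}} - \text{c.c.}$ vanishes appropriately — i.e. that $i\partial_b\bar\partial_b(\varphi\circ F)$ restricted to $H(M)$ is symmetric in the right way and that its non-$H(M)$ components match. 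Both of these follow from $d\theta = F^{\ast}\omega$ together with $d(d\theta)=0$ and the fact that $F^{\ast}\omega$ is, by CR-ness of $F$, the pullback of an exact $(1,1)$-form that is itself $i\partial\bar\partial$ of the single function $\varphi\circ F$.

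The main obstacle is the transversal direction: on $H(M)$ the matching of $i\partial_b\bar\partial_b(\varphi\circ F)$ with $d\theta|_{H(M)}$ is immediate from semi-isometry, but to conclude $\varphi\circ F = \Re G$ globally on a neighborhood (not merely that the Levi forms agree) one must handle the Reeb direction, i.e. show $T(\varphi\circ F)$ is the real part of a CR datum too, equivalently that $\varphi\circ F$ is annihilated by the full tangential Cauchy--Riemann complex modulo conjugation. Here the point is that the \emph{full} equation $d\theta = F^{\ast}\omega$ (not just its restriction to $H(M)$) is available — and this is exactly why the definition of semi-isometric was taken in the strong form $d\theta = F^{\ast}\omega$ rather than the weaker \eqref{e:weaker}. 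So I expect the proof to: (1) expand $F^{\ast}(i\partial\bar\partial\varphi) = i\partial_b\bar\partial_b(\varphi\circ F)$ using $F$ CR; (2) equate with $d\theta$ in all bidegrees to get $(\varphi\circ F)_{\alpha\bar\beta} = h_{\alpha\bar\beta}$, $(\varphi\circ F)_{\alpha\beta}=0$, and the transversal identity; (3) observe $(\varphi\circ F)_{\alpha\beta}=0$ together with the transversal relation says precisely that $\bar\partial_b\partial_b(\varphi\circ F)=0$, hence locally $\partial_b(\varphi\circ F) = \partial_b G$ for a CR function $G$, whence $\varphi\circ F - \Re G$ is a real CR function, hence (being real and CR on a strictly pseudoconvex hypersurface) locally constant, and absorbing the constant into $G$ finishes.
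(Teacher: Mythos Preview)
Your overall strategy---show that $\varphi\circ F$ is CR-pluriharmonic on $M$ and invoke the local characterization $u=\Re G$---is exactly the alternative route the paper mentions but does not spell out. The paper's primary proof is quite different: it first extends $F$ to a holomorphic map $\tilde F$ on a one-sided neighborhood via the H.~Lewy extension theorem, then observes that $\iota^{\ast}\bigl(i\partial\bar\partial(\varphi\circ\tilde F-\rho)\bigr)=0$ on $M$ (because $\tilde F^{\ast}$ commutes with $\partial,\bar\partial$), and finally invokes the Bedford--Federbush theorem on pluriharmonic boundary values to conclude. The advantage of the paper's argument is that it offloads the delicate integrability analysis to an existing black box; your approach, by contrast, would make the result self-contained but requires verifying Lee's CR-pluriharmonicity criterion by hand.

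There is, however, a genuine gap in your sketch. Your step (2) claims that matching the $\theta^{\alpha}\wedge\theta^{\beta}$ components of $d\theta=F^{\ast}\omega$ yields $(\varphi\circ F)_{\alpha\beta}=0$. But this component is \emph{automatically} zero on both sides: $d\theta=ih_{\alpha\bar\beta}\,\theta^{\alpha}\wedge\theta^{\bar\beta}$ has no $(2,0)$ part in an admissible coframe, and since $F$ is CR one has $dF^{A}=F^{A}_{\gamma}\theta^{\gamma}+F^{A}_{0}\theta$ with no $\theta^{\bar\gamma}$ term, so $F^{\ast}\omega=i\varphi_{A\bar B}\,dF^{A}\wedge d\bar F^{B}$ also has no $\theta^{\alpha}\wedge\theta^{\beta}$ component. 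Equating zero to zero gives no information about the Tanaka--Webster second derivatives $(\varphi\circ F)_{\alpha,\beta}$. Likewise, the asserted identity $F^{\ast}(i\partial\bar\partial\varphi)=i\partial_b\bar\partial_b(\varphi\circ F)$ is not literally correct on a CR manifold, where $d\neq\partial_b+\bar\partial_b$ because of the Reeb direction. To carry out your route you must genuinely use that $\theta$ itself comes from the ambient potential $\rho$, not merely that $d\theta=F^{\ast}\omega$; the paper signals this by pointing to the explicit connection formula \eqref{e:cf}. Concretely, one computes $(\varphi\circ F)_{\alpha,\beta}=\varphi_{AB}F^{A}_{\alpha}F^{B}_{\beta}+\varphi_{A}F^{A}_{\alpha,\beta}$, and neither term is forced to vanish by the two-form equation alone---what is needed is the comparison with the analogous expression for $\rho$ via the embedding $\iota$, which is where the holomorphic extension (or the explicit Li--Luk formula) enters.
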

\begin{proof} We assume that $U$ is an open set in $\mathbb{C}^{n+1}$, $F$ sends $U\cap M$ into an open coordinate patch $V \subset \mathcal{X}$, and $\varphi$ is defined in $V$ such that $i\partial\bar{\partial}\varphi = \omega$. Since $M$ is strictly pseudoconvex, we can apply H. Lewy extension theorem to each component of $Z\circ F$ ($Z$ is a holomorphic coordinate in $V$) to deduce that $F$ extends to a holomorphic map $\tilde{F}$ in an one-sided neighborhood $U^+\subset U$ of $p$. (Note that all components of $Z \circ F$ extend to the same side, the pseudoconvex side). We can also assume that $\tilde{F}(U) \subset V$. Observe that $F=\tilde{F}\circ\iota$ and hence $F^{\ast} = \iota^{\ast} \circ \tilde{F}^{\ast}$, by the smoothness of the extension. Since $\theta=\iota^{\ast}(i\bar\partial\rho)$, the semi-isometry assumption gives
\[
    \iota^{\ast}\tilde{F}^{\ast}\partial\bar\partial\varphi=\iota^{\ast}\partial\bar\partial\rho.
\]
As $\tilde{F}$ is holomorphic on $U^+$, the pull-back $\tilde{F}^{\ast}$ commutes with $\partial$ and $\bar\partial$, and by continuity, the same holds on $M\cap U$.
Hence $\iota^{\ast} (i\partial\bar{\partial}(\varphi \circ \tilde{F} - \rho)) =0$ and thus $\varphi \circ F$ satisfies conditions (2) and (3) in Bedford and Federbush \cite{bedford1974pluriharmonic} with $\alpha = 1$. Hence, by Theorem 1 of \cite{bedford1974pluriharmonic}, locally there exists a CR function $G$ such that $\varphi \circ F = \Re G$.

Alternatively, one can verify, using \cref{e:cf} below, that $\varphi \circ F$ satisfies the condition characterizing CR-pluriharmonic functions in \cite{lee1988pseudo} and hence the existence of such $G$ follows. The proof is complete.
\end{proof}
In view of \cref{prop:kahlerpotential}, if $(M,\theta)$ is semi-isometric CR immersed into a complex euclidean space $\mathbb{C}^N$, then $M$ is locally CR embeddable into a sphere of $\mathbb{C}^{N+1}$. Indeed,  $\|Z\|^2$ is a K\"ahler potential of the euclidean metric in $\mathbb{C}^N$. If $F = (F_1, F_2,\dots , F_N)$ is such an immersion into $\mathbb{C}^N$ and $G$ is a local CR function on $M$ such that $\|F\|^2 = \Re G$ on $M$. Then the map $(F_1, \dots, F_N, G)$ is a CR map sending $M$ into the Heisenberg hypersurface defined by $\Re Z_{N+1} = \sum_{A=1}^N |Z_A|^2$ in $\mathbb{C}^{N+1}$. Since CR manifolds are not generally CR embeddable into a sphere of any dimension, even locally \cite{faran1988nonimbeddability}, the CR analogue of the Cartan-Janet theorem for semi-isometric CR immersions does not hold.

We consider a strictly pseudoconvex
real hypersurface $M$ defined by $\rho = 0$ with $d\rho \ne 0$ and $\rho$ is a Kähler potential for a metric $\omega$ on $U$. If $\theta : = (i/2)(\bar{\partial}\rho - \partial \rho)$, then $\iota$ is CR semi-isometric. As pointed out in \cite{lee--melrose}, there is an associated $(1,0)$-field $\xi$ such that 
\begin{equation}\label{e:tvdef}
\xi \, \rfloor \, \partial \rho = 1, \quad \xi \, \rfloor \, \partial\bar{\partial} \rho = 0 \mod \bar{\partial} \rho.
\end{equation}
The function $r: = \rho_{j\kba} \xi^j \xi^{\kba}|_M$ is called the transverse curvature \cite{graham1988smooth}. Take a local holomorphic coordinate $z_1,z_2,\dots , z_n, z_{n+1} = w$ such that $\rho_w \ne 0$ and define
\begin{equation}\label{e:cframe}
\theta^{k}: = dz^k - \xi^k \partial\rho, \quad k = 1,2,\dots , n+1.
\end{equation}
Then $\{\theta^{\alpha}$, $\alpha = 1,2,\dots , n\}$ is an admissible coframe for $(M,\theta)$. The corresponding Levi matrix $h_{\alpha\bba}$ is given by
\begin{equation}\label{e:levimatrix0}
	h_{\alpha\bba} =
	-i d\theta(Z_{\alpha}, Z_{\bba})
	=
	\rho_{\alpha \bba}-\rho_\alpha \partial_{\bba}\log \rho_{w}-\rho_{\bba}\partial_{\alpha}\log \rho_{\wba}+\rho_{w\wba}
	\frac{\rho_\alpha \rho_{\bba}}{|\rho_{w}|^2}.
\end{equation}
Moreover, the Tanaka-Webster connection forms $\omega_{\beta}{}^{\alpha}$'s are given by \cite{li--luk}
\begin{align}\label{e:cf}
	\omega_{\beta}{}^{\alpha}
	=
	\left(h^{\alpha\bar{\mu}}Z_{\gamma} h_{\beta\bar{\mu}} -\xi_{\beta}\delta_{\gamma}^{\alpha}\right) \theta^{\gamma}
	+ \xi^{\alpha} h_{\beta\bar{\gamma}} \theta^{\bar{\gamma}}
	- i Z_{\beta} \xi^{\alpha}\theta.
\end{align}
From \cref{e:cf}, we can calculate the Ricci form via the formula $\Ric = id\omega_\alpha{}^{\alpha} \mod \theta$.
Indeed, Li and Luk \cite{li--luk} derived the following useful formula. 
\begin{prop}[Li--Luk \cite{li--luk}] Let $\Ric$ be the Ricci (1,1)-form restricted to $H(M)$. Then
\begin{equation}
\Ric
=
(n+1)r(\iota^{\ast}\omega)|_{H(M)}
-
\iota^{\ast} (i\partial\bar{\partial} \log J(\rho))|_{H(M)}.
\end{equation} 
Here $J(\rho)$ is the (Levi-) Fefferman determinant defined in \cref{e:fm}.
\end{prop}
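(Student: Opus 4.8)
The plan is to derive the formula by a direct computation with the explicit Tanaka--Webster connection forms \cref{e:cf}. As recalled just above the statement, the Ricci $(1,1)$-form satisfies $\Ric \equiv i\,d\omega_{\alpha}{}^{\alpha}\pmod{\theta}$ on $H(M)$, so it suffices to trace \cref{e:cf}, differentiate, and retain the $\theta^{\rho}\wedge\theta^{\bar\sigma}$ component. Setting $\beta=\alpha$ in \cref{e:cf} and observing that $\sum_{\alpha,\mu}h^{\alpha\bar\mu}Z_{\gamma}h_{\alpha\bar\mu}=Z_{\gamma}\log\det(h_{\alpha\bar\beta})$, the trace is
\[
\omega_{\alpha}{}^{\alpha}
=\bigl(Z_{\gamma}\log\det(h_{\alpha\bar\beta})-\xi_{\gamma}\bigr)\,\theta^{\gamma}
+\xi^{\alpha}h_{\alpha\bar\gamma}\,\theta^{\bar\gamma}
-i\,(Z_{\alpha}\xi^{\alpha})\,\theta .
\]
The first step is to trade $\det(h_{\alpha\bar\beta})$ for the Fefferman determinant. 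Expanding the bordered determinant \cref{e:fm} along its first row, using $\rho=0$ on $M$, and comparing with the Levi matrix \cref{e:levimatrix0}, one gets the classical identity $J(\rho)=|\rho_{w}|^{2}\det(h_{\alpha\bar\beta})$ along $M$; hence $Z_{\gamma}\log\det(h_{\alpha\bar\beta})=Z_{\gamma}\log J(\rho)-Z_{\gamma}\log|\rho_{w}|^{2}$.

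I would then substitute this into $\omega_{\alpha}{}^{\alpha}$ and compute $d\omega_{\alpha}{}^{\alpha}$ modulo $\theta$. The resulting $(1,1)$-form on $H(M)$ splits into two parts: a part coming from the ambient complex Hessian of $\log J(\rho)$ — which, since $J(\rho)>0$ near $M$ so that $\log J(\rho)$ extends smoothly, is precisely $-\iota^{\ast}(i\partial\bar\partial\log J(\rho))|_{H(M)}$, using that the Webster covariant complex Hessian on $H(M)$ of a restricted function agrees with the restriction of its ambient complex Hessian — and a remainder built from the terms involving $Z_{\gamma}\log|\rho_{w}|^{2}$, $\xi_{\gamma}$, $\xi^{\alpha}h_{\alpha\bar\gamma}$, $Z_{\alpha}\xi^{\alpha}$, together with the $d\theta^{\alpha}$-contributions dictated by the structure equations. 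The heart of the argument is to show that this remainder equals $(n+1)\,r\,(\iota^{\ast}\omega)|_{H(M)}$. For this one uses $d\theta^{\alpha}\equiv\theta^{\beta}\wedge\omega_{\beta}{}^{\alpha}\pmod{\theta}$, the definition \cref{e:cframe} of the coframe, and, crucially, the two relations \cref{e:tvdef} defining $\xi$: the condition $\xi\,\rfloor\,\partial\bar\partial\rho\equiv 0\pmod{\bar\partial\rho}$ forces $\rho_{j\bar k}\xi^{j}=r\,\rho_{\bar k}$, the proportionality constant being exactly the transverse curvature $r$, as one sees by contracting with $\xi^{\bar k}$ and using $\xi\,\rfloor\,\partial\rho=1$.

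I expect this collection-of-terms step to be the main obstacle. It is purely local and elementary, but it demands careful bookkeeping: many terms produced by $d$ still contain $\theta$ and are dropped, others cancel in pairs, and one must track the combinatorics responsible for the factor $n+1$ (rather than $n$). A conceptually more transparent alternative is to start from the ambient K\"ahler--Ricci form $-i\partial\bar\partial\log\det(\rho_{j\bar k})$: block-diagonalizing the complex Hessian $(\rho_{j\bar k})$ in the frame $\{Z_{1},\dots,Z_{n},\xi\}$ via \cref{e:tvdef} yields $\det(\rho_{j\bar k})=r\,J(\rho)$ on $M$, and combining this with the trace of the Gau\ss{} equation \cref{e:gauss} — together with the fact $|\mean|^{2}=r$ established in \cref{sec:transverse} — produces the stated identity after a comparable amount of computation.
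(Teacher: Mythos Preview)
The paper does not actually prove this proposition: it is quoted from Li--Luk \cite{li--luk}, and the only indication given is the sentence immediately preceding it, namely that $\Ric \equiv i\,d\omega_{\alpha}{}^{\alpha}\pmod\theta$ can be computed from the explicit connection forms \cref{e:cf}. Your outline follows precisely this route, so in spirit it matches what the paper points to (and, as far as one can tell, what Li--Luk themselves do).

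Two remarks on the details. First, your identity $J(\rho)=|\rho_{w}|^{2}\det(h_{\alpha\bar\beta})$ along $M$ and the traced form of $\omega_{\alpha}{}^{\alpha}$ are correct. Second, the sentence ``the Webster covariant complex Hessian on $H(M)$ of a restricted function agrees with the restriction of its ambient complex Hessian'' is not literally true: the Gau\ss{} formula \cref{e:2.5a} introduces a second-fundamental-form correction, so $(Z_{\gamma}\log J(\rho))\,\theta^{\gamma}$ does not differentiate cleanly to $\iota^{\ast}(i\partial\bar\partial\log J(\rho))|_{H(M)}$ without residual terms. Those residual terms are exactly of the type you have already put into the ``remainder,'' so this does not break the argument---it just means the split you describe is not as clean as stated, and reinforces your own assessment that the collection-of-terms step is where all the work lies.

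Your alternative via $\det(\rho_{j\bar k})=r\,J(\rho)$ and the trace of the Gau\ss{} equation is logically independent of the Li--Luk formula in the paper's development (Propositions \ref{prop:ge} and \ref{prop:transmean} do not rely on it), so it is a legitimate second proof rather than a circularity; but it is not the route the paper or Li--Luk take.
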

We calculate the $(1,0)$-mean curvature vector explicitly as follows.
\begin{prop}\label{prop:transmean}
	Let $M\subset \mathbb{C}^{n+1}$ be defined by $\rho=0$, $d\rho \ne 0$, where $\rho$ is a strictly plurisubharmonic defining function on an open set $U$ containing $M$. Assume that $\theta = \iota^{\ast}(i\bar{\partial}\rho)$ and $\omega = i\partial\bar{\partial}\rho$. Then $\iota \colon (M,\theta) \to (U , \omega)$ is a semi-isometric CR immersion. Moreover, the second fundamental form satisfies
	\begin{equation}\label{e:sfftrans}
		\II(Z_{\aba}, Z_{\beta})
		= 
		- h_{\beta\aba}\, \xi.
	\end{equation}
	In particular, the squared mean curvature $|\mean|^2$ coincides with the transverse curvature of $\rho$:
	\begin{equation}
	r(\rho) = |\xi|_{\omega}^2
	=
	|\mean|_{\omega}
	^2.
	\end{equation}
\end{prop}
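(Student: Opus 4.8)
The semi-isometry part is immediate: since $d(i\bar\partial\rho)=i\partial\bar\partial\rho=\omega$, pulling back by $\iota$ gives $d\theta=\iota^{\ast}\omega$, and $\iota$ is a CR immersion automatically. The real content is the identity \cref{e:sfftrans}, and the plan is to identify the Reeb field of $\theta$, regarded inside the ambient $\mathbb{C}^{n+1}$, with the $(1,0)$-field $\xi$ of \cref{e:tvdef}, and then feed this into the structure identities \cref{e:meanreeb} and \cref{e:sffbu} already established for K\"ahler ambient spaces.

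To do this, I would extend $\theta$ to the ambient $1$-form $\eta:=i\bar\partial\rho$, so that $d\eta=\omega$. By \cref{e:tvdef} one has $\xi\,\rfloor\,\partial\bar\partial\rho=\lambda\,\bar\partial\rho$ for some function $\lambda$; pairing this with $\bar\xi$ and using $\bar\partial\rho(\bar\xi)=\overline{\partial\rho(\xi)}=1$ identifies $\lambda=\rho_{j\kba}\xi^{j}\xi^{\kba}=:r(\rho)$, which restricts to the transverse curvature on $M$. Hence $\xi\,\rfloor\,\omega=i\,r(\rho)\,\bar\partial\rho$ and, by conjugation (with $r(\rho)$ real), $\bar\xi\,\rfloor\,\omega=-i\,r(\rho)\,\partial\rho$. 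It follows that the real field $i(\xi-\bar\xi)$ is tangent to $M$ (it annihilates $\rho$), contracts $\omega$ to $-r(\rho)\,d\rho$ (hence annihilates $d\theta=\iota^{\ast}\omega$ along $M$), and satisfies $\eta\bigl(i(\xi-\bar\xi)\bigr)=1$. By uniqueness of the Reeb field, $T=i(\xi-\bar\xi)$, so, decomposing $T$ into types in $\mathbb{C}T\mathbb{C}^{n+1}$ along $M$, the $(1,0)$-part of $T$ equals $i\xi$.

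The rest is bookkeeping with identities already in hand. Taking $(1,0)$-parts in $\mean-\overline{\mean}=iT$ (\cref{e:meanreeb}) gives $\mean=i(i\xi)=-\xi$. Since the semi-isometry makes the metric induced on $H(M)$ from $\omega$ coincide with the Levi metric, we have $\langle Z_{\beta},Z_{\aba}\rangle=-i\,d\theta(Z_{\beta},Z_{\aba})=h_{\beta\aba}$; substituting $\mean=-\xi$ into \cref{e:sffbu} then yields $\II(Z_{\aba},Z_{\beta})=\langle Z_{\beta},Z_{\aba}\rangle\,\mean=-h_{\beta\aba}\,\xi$, which is \cref{e:sfftrans}. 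Finally, $|\mean|_{\omega}^{2}=|\xi|_{\omega}^{2}=\rho_{j\kba}\xi^{j}\xi^{\kba}$, which on $M$ is precisely $r(\rho)$.

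The crux is the middle step: recognizing the Reeb field of $\theta$ as $i(\xi-\bar\xi)$; everything after that is a substitution into identities already proved. A self-contained but more computational alternative would avoid the structure identities entirely: with $Z_{\beta}=\partial_{\beta}-(\rho_{\beta}/\rho_w)\partial_w$, the Chern connection of the K\"ahler metric satisfies $\widetilde{\nabla}_{\bar X}Y=(\bar XY^{k})\partial_k$ for $(1,0)$-fields $Y=Y^{k}\partial_k$ and $(0,1)$-fields $\bar X$, so $\widetilde{\nabla}_{Z_{\aba}}Z_{\beta}=-Z_{\aba}(\rho_{\beta}/\rho_w)\,\partial_w$; one then compares with the Tanaka--Webster connection form \cref{e:cf} (whose value on $Z_{\aba}$ is $\xi^{\alpha}h_{\beta\aba}$), uses the explicit Levi matrix \cref{e:levimatrix0} to verify $\rho_w\,Z_{\aba}(\rho_{\beta}/\rho_w)=h_{\beta\aba}$, and uses the decomposition $\xi=\xi^{\alpha}Z_{\alpha}+\rho_w^{-1}\partial_w$ (a consequence of $\xi^{j}\rho_{j}=1$), recovering \cref{e:sfftrans} directly.
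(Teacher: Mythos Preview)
Your argument is correct and takes a genuinely different route from the paper. The paper proves \cref{e:sfftrans} by a direct local computation: in coordinates with $\rho_w\neq 0$ and $Z_{\gamma}=\partial_{\gamma}-(\rho_{\gamma}/\rho_w)\partial_w$, it evaluates $\widetilde{\nabla}_{Z_{\alpha}}Z_{\bba}$ (using that the Chern connection in these coordinates just differentiates coefficients) and $\nabla_{Z_{\alpha}}Z_{\bba}$ (via the Li--Luk connection forms \cref{e:cf}), subtracts, and simplifies using the explicit Levi matrix \cref{e:levimatrix0} to obtain $\II(Z_{\alpha},Z_{\bba})=-h_{\alpha\bba}\,\bar{\xi}$, the conjugate of \cref{e:sfftrans}. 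This is essentially the ``self-contained computational alternative'' you sketch in your final paragraph.

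Your main argument instead recognizes that the identity is already encoded in \cref{e:meanreeb} and \cref{e:sffbu} once one knows the Reeb field explicitly. The verification that $T=i(\xi-\bar{\xi})$ is clean and correct (this is the classical Graham--Lee/Lee--Melrose formula), and the passage from $\mean-\overline{\mean}=iT$ to $\mean=-\xi$ by comparing types is valid because $\mean\in N^{1,0}M$ and $\xi$ is of type $(1,0)$. What your approach buys is brevity and conceptual clarity: it isolates the one nontrivial geometric input (the Reeb field formula) and reduces everything else to structure identities already proved in the abstract K\"ahler setting. What the paper's approach buys is independence from those earlier propositions and an explicit local calculation that can be reused (e.g., the intermediate formula $Z_{\alpha}(\rho_{\bba}/\rho_{\wba})=h_{\alpha\bba}/\rho_{\wba}$), at the cost of more bookkeeping.
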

\begin{proof} It is immediate that $\iota$ is a CR semi-isometric immersion as $d\theta = \iota^{\ast} \omega$. 
To prove \cref{e:sfftrans}, we shall calculate the second fundamental form explicitly by using \cref{e:cf}. Let
$(z^1, z^2, \dots , z^n , w = z^{n+1})$ be a local coordinate system near a point $p\in M$.
We can suppose that $\rho_w \ne 0$ near $p$. Put 
\begin{equation}
	Z_{\gamma} = \partial_{\gamma} - (\rho_\gamma/\rho_w)\partial_w.
\end{equation}
Then $Z_{\alpha}$, $\alpha = 1,2, \dots , n$, form a frame of $T^{1,0}M$ near a point $p$ with $\rho_w \ne 0$. From this, one can easily compute,
\begin{align}
	Z_{\alpha}\left(\frac{\rho_{\bba}}{\rho_{\wba}}\right)
	=
	\left(\partial_{\alpha} - \frac{\rho_{\alpha}}{\rho_w}\partial_w\right) \left(\frac{\rho_{\bba}}{\rho_{\wba}}\right) = 
	\frac{h_{\alpha\bba}}{\rho_{\wba }}.
\end{align}
Therefore, if $\widetilde{\nabla}$ is the Chern connection of $\omega$, then
\begin{align}
	\widetilde{\nabla}_{Z_{\alpha}} Z_{\bba }
	& =
	\widetilde{\nabla}_{Z_{\alpha}} \partial_{\bba} - Z_{\alpha}\left(\frac{\rho_{\bba}}{\rho_{\wba}}\right) \partial_{\wba} - \left(\frac{\rho_{\bba}}{\rho_{\wba}}\right) \widetilde{\nabla} _{Z_{\alpha}} \partial_{\wba}\notag \\
	& =
	-(1/\rho_{\wba }) h_{\alpha\bba } \partial_{\wba }.
\end{align}
	On the other hand, by \cref{e:cf}, we have
	\begin{align}
	\nabla_{Z_{\alpha}}Z_{\bba }\notag 
	& = 
	\xi^{\bar{\gamma}} h_{\alpha\bba } Z_{\bar{\gamma}} \notag \\
	& = 
	|\partial \rho|^{-2}_{\rho} h_{\alpha\bba } \rho^{\bar{\gamma}} \left(\partial_{\bar{\gamma}} -(\rho_{\bar{\gamma}} / \rho_{\wba })\partial_{\wba } \right) \notag \\
	& = h_{\alpha\bba } \left[r \rho^{\kba } \partial_{\kba } - \frac{1}{\rho_{\wba }} \partial_{\wba }\right].
	\end{align}	
	Therefore,
	\begin{align}\label{e:sffum}
	\II(Z_{\alpha}, Z_{\bba })
	& =
	\widetilde{\nabla}_{Z_{\alpha}} Z_{\bba }
	-
	\nabla_{Z_{\alpha}} Z_{\bba } \notag \\
	& =
	- r h_{\alpha\bba } \rho^{\kba } \partial_{\kba } \notag \\
	& =
	- h_{\alpha\bba } \overline{\xi}.
	\end{align}
	This proves \cref{e:sfftrans}.	Taking the trace with respect to the Levi-form, we obtain
	\begin{equation}
	\overline{\mean} = - \overline{\xi}.
	\end{equation}
	In particular,
	\begin{equation*}
	|\mean|^2_{\omega}
	=
	|\bar{\xi}|^2_{i\partial\bar{\partial} \rho}
	=
	r(\rho). \qedhere 
	\end{equation*}
\end{proof}
\begin{cor}	
	Let $F \colon U \to (\mathcal{X},\omega)$ be a holomorphic immersion and $\varphi$ a Kähler potential of $\omega$. Let $\rho: = \varphi \circ F$ and suppose that $\{\rho = 0\}$ defines a strictly pseudoconvex real-hypersurface $M\subset U$ with $d\rho \ne 0$ on $M$. Put $\theta: = i\bar{\partial} \rho|_M$ then $F$ is a semi-isometric immersion from $(M,\theta)$ into $(\mathcal{X},\omega)$. Moreover, 
	\begin{equation}
	|\mean_{F(M)}|^2\circ F = r(\rho),
	\end{equation}
	where $r(\rho)$ is the transverse curvature.
\end{cor}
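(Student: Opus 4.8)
The plan is to transport the whole configuration through $F$ and reduce to \cref{prop:transmean}. First, since $F$ is holomorphic, $F^{\ast}$ commutes with $\partial$ and $\bar{\partial}$, so
\begin{equation*}
	i\partial\bar{\partial}\rho = i\partial\bar{\partial}(\varphi\circ F) = F^{\ast}(i\partial\bar{\partial}\varphi) = F^{\ast}\omega,
\end{equation*}
and since $F$ is an immersion into a Kähler manifold the right-hand side is a positive $(1,1)$-form; thus $\rho$ is a strictly plurisubharmonic defining function for $M$ on $U$. Restricting to $M=\{\rho=0\}$ and using $d(\bar{\partial}\rho)=\partial\bar{\partial}\rho$ gives $d\theta=(i\partial\bar{\partial}\rho)|_M=(F|_M)^{\ast}\omega$, while $F|_M$ is a CR immersion because $F$ is; hence $F$ is semi-isometric in the sense of \cref{e:semi-isometric}. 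Since both $|\mean_{F(M)}|^2\circ F$ and $r(\rho)$ are pointwise quantities on $M$, it suffices to prove the identity near an arbitrary point, so after shrinking $U$ I may assume $F$ is a biholomorphism onto a complex submanifold $\mathcal{V}:=F(U)\subset\mathcal{X}$ of dimension $n+1$; then $F$ is a Kähler isometry of $(U,i\partial\bar{\partial}\rho)$ onto $(\mathcal{V},\omega|_{\mathcal{V}})$ carrying the pseudohermitian submanifold $(M,\theta)\hookrightarrow(U,i\partial\bar{\partial}\rho)$ onto $F(M)\hookrightarrow(\mathcal{V},\omega|_{\mathcal{V}})$.

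Next I would compare the $(1,0)$-mean curvature of $F(M)$ taken in $\mathcal{X}$ with that taken in $\mathcal{V}$. For $Z,W$ tangent to $F(M)$, hence to $\mathcal{V}$, the Gauß formula \cref{e:gaussform}, used once for $F(M)\subset\mathcal{V}$ and once for $\mathcal{V}\subset\mathcal{X}$ — noting that the Chern connection of $(\mathcal{V},\omega|_{\mathcal{V}})$ is the tangential part of $\widetilde{\nabla}$ on $\mathcal{X}$, since the induced metric on a complex submanifold is Kähler and its Chern connection is its Levi-Civita connection — gives $\II^{F(M)\subset\mathcal{X}}(Z,W)=\II^{F(M)\subset\mathcal{V}}(Z,W)+\sigma(Z,W)$, where $\sigma$ is the second fundamental form of $\mathcal{V}$ in $\mathcal{X}$. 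The crucial point is that $\sigma$ has no mixed part: if $Z\in T^{1,0}\mathcal{V}$ and $\overline{W}\in T^{0,1}\mathcal{V}$, then in a local holomorphic frame of $T^{1,0}\mathcal{X}$ adapted to $\mathcal{V}$ the Chern connection matrix is of type $(1,0)$, so $\widetilde{\nabla}_{\overline{W}}Z$ remains tangent to $\mathcal{V}$, i.e.\ $\sigma(\overline{W},Z)=0$ (equivalently, a complex submanifold of a Kähler manifold is minimal). Taking an orthonormal frame $\{Z_\alpha\}$ of $T^{1,0}F(M)$ and summing the Gauß formula over $(Z,W)=(Z_{\aba},Z_\alpha)$ therefore yields $\mean_{F(M)\subset\mathcal{X}}=\mean_{F(M)\subset\mathcal{V}}$, which lies in $T^{1,0}\mathcal{V}$; as $\mathcal{V}\hookrightarrow\mathcal{X}$ is isometric, the two vectors have the same $\omega$-length.

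Finally, through the Kähler isometry $F$ the submanifold $F(M)\hookrightarrow(\mathcal{V},\omega|_{\mathcal{V}})$ is identified with $(M,\theta)\hookrightarrow(U,i\partial\bar{\partial}\rho)$, which is exactly the situation treated in \cref{prop:transmean}; hence by \cref{e:sfftrans} and the definition \cref{e:tvdef} of $\xi$ we get $|\mean_{F(M)\subset\mathcal{V}}|^2=|\mean_{M\subset U}|^2=|\xi|^2_{i\partial\bar{\partial}\rho}=r(\rho)$ at corresponding points. Chaining this with the equality of the previous paragraph gives $|\mean_{F(M)}|^2\circ F=r(\rho)$, as asserted; the semi-isometry was already established above. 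The one genuinely non-formal ingredient is the vanishing of the mixed part of $\sigma$, i.e.\ the classical minimality of complex submanifolds of Kähler manifolds; everything else is bookkeeping — checking that metrics, connections, and contact forms match up under $F$ — plus a direct appeal to \cref{prop:transmean}.
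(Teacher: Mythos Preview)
Your proof is correct, but the paper proceeds more directly. From \cref{e:meanreeb} one has $\mean-\overline{\mean}=iT$, so $\mean$ is nothing but the $(1,0)$-component of $iT$; since $F$ is holomorphic, $F_{\ast}$ commutes with the type decomposition, and hence $\mean_{F(M)}=(iF_{\ast}T)^{(1,0)}=F_{\ast}\bigl((iT)^{(1,0)}\bigr)=-F_{\ast}\xi$, the last equality coming from \cref{prop:transmean} applied to $M\subset U$. A chain-rule computation then finishes: $|\mean_{F(M)}|^2\circ F=\varphi_{A\bar B}\,F^A_j\xi^j\,F^{\bar B}_{\bar k}\xi^{\bar k}=\rho_{j\bar k}\xi^j\xi^{\bar k}=r(\rho)$.

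By contrast, you factor through the intermediate complex submanifold $\mathcal{V}=F(U)$ and invoke the vanishing of the mixed part of the second fundamental form of a complex submanifold of a K\"ahler manifold to conclude that the $(1,0)$-mean curvatures of $F(M)$ in $\mathcal{X}$ and in $\mathcal{V}$ coincide; then you transport back through the K\"ahler isometry $F$ and quote \cref{prop:transmean}. This is a perfectly sound argument and makes the geometry transparent, but it imports an auxiliary (if classical) fact about complex submanifolds that the paper's route bypasses entirely: the paper exploits only that $\mean$ is determined by the Reeb field, a fact already established in \cref{e:meanreeb}, and thereby gets the push-forward compatibility for free from the holomorphicity of $F$.
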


\begin{proof}
From \cref{e:meanreeb}, we have $\mean_{F(M)} = -F_{\ast} \xi$, where $\xi$ is the transverse vector field associated to the definining function~$\rho$. In local coordinates $Z^A$ of $\mathcal{X}$, we write $F^A = Z^A \circ F$. Then 
\begin{equation}
	|\mean_{F(M)}|^2 \circ F
	=
	\varphi_{A\bar{B}} F^A_j\xi^j F^{\bar{B}}_{\bar{k}} \xi^{\bar{k}} = \rho_{j\kba}\xi^j\xi^{\kba} = r(\rho).
\end{equation}
Here the repeated uppercase indices are summed from $1$ to $\dim_{\mathbb{C}} \mathcal{X}$ while the lowercase indices are summed from $1$ to $\dim_{C\!R}M$.
\end{proof}

\section{Chern-Moser CR umbilical points and umbilical points of immersions}\label{sec:um}
In this section, we use the Gauß equation to determine the CR umbilical points on strictly pseudoconvex CR manifolds of dimension at least 5 which are semi-isometrically immersed in a complex euclidean space and prove \cref{thm:umbilichypersurface}.
\begin{defn}[Chern-Moser CR umbilical points \cite{chern1974real}]
	Let $M$ be a Levi-nondegenerate CR manifold of hypersurface type. A point $p\in M$ is called a CR umbilical point if the Chern-Moser curvature tensor vanishes at $p$.
\end{defn}
It is well-known that if $M$ is CR umbilical in a neighborhood of $p$, then $M$ is locally spherical at $p$ \cite{chern1974real}.

We denote $\II^{\circ}$ the traceless component of $\II$. Precisely, $\II^{\circ}(Z,W) = \II(Z,W)$, $\II^{\circ}(\Zba, \Wba) = \II(\Zba, \Wba)$, and $\II^{\circ}(Z,\Wba) = \II^{\circ}(\Wba, Z) = 0$, for all $Z, W \in T^{1,0}M$ and $\Zba, \Wba \in T^{0,1}M$.
\begin{defn}[Umbilical points of an immersion]
	Let $\iota \colon (M,\theta) \hookrightarrow (\mathcal{X},\omega)$ be a strictly pseudoconvex pseudohermitian CR submanifold, $d\theta = \iota^{\ast} \omega$. We say that $p\in M$ is \emph{pseudohermitian umbilical} at $p$ if $\II^{\circ}(p) = 0$.
\end{defn}
The following is a simple extension of Lemma 5.2 in \cite{ebenfelt2004rigidity}.
\begin{prop}\label{prop:2um}
	Let $\iota \colon (M^{2n+1},\theta) \hookrightarrow (\mathbb{C}^N, \omega)$, $\omega:= i\partial\bar{\partial}\|Z\|^2$, be a strictly pseudoconvex pseudohermitian submanifold and $p\in M$. Assume that $\dim M \geq 5$.
	\begin{enumerate}[(i)]
		\item If $\II^{\circ}(p) = 0$, then $p$ is a CR umbilical point in the sense of Chern and Moser.
		\item If $M$ is a CR umbilical at $p$ and $N\leq 2n$, then $\II^{\circ}(p) = 0$.
	\end{enumerate}
\end{prop}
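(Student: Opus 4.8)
The plan is to read both assertions off the Gau\ss{} equation \cref{e:gauss}, exploiting that the ambient curvature $\widetilde R$ vanishes since $(\mathbb{C}^N,i\partial\bar\partial\|Z\|^2)$ is flat. Fix $p\in M$, an orthonormal frame $\{Z_\alpha\}$ of $T^{1,0}M$ near $p$, and write $\II(Z_\alpha,Z_\gamma)=\omega^a_{\alpha\gamma}Z_a$ as in the text; observe that $\II^\circ(p)=0$ is equivalent to $\omega^a_{\alpha\gamma}(p)=0$ for all indices, since $\II^\circ$ agrees with $\II$ on $T^{1,0}M\times T^{1,0}M$. With $\widetilde R=0$, \cref{e:gauss} becomes
\begin{equation*}
	R_{\alpha\bba\gamma\sba}=-\,\omega^a_{\alpha\gamma}\,\omega^{\bar b}_{\bba\sba}\,h_{a\bar b}+|\mean|^2\bigl(h_{\alpha\bba}h_{\gamma\sba}+h_{\alpha\sba}h_{\gamma\bba}\bigr).
\end{equation*}
The Chern--Moser tensor $S_{\alpha\bba\gamma\sba}$ is by definition the totally trace-free component of $R_{\alpha\bba\gamma\sba}$; since a scalar multiple of $h_{\alpha\bba}h_{\gamma\sba}+h_{\alpha\sba}h_{\gamma\bba}$ has vanishing totally trace-free component, the $|\mean|^2$-term drops out and $S_{\alpha\bba\gamma\sba}$ is exactly the totally trace-free component of $-\,\omega^a_{\alpha\gamma}\,\omega^{\bar b}_{\bba\sba}\,h_{a\bar b}$.

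Part (i) is now immediate: if $\II^\circ(p)=0$ then $\omega^a_{\alpha\gamma}(p)=0$, so $-\,\omega^a_{\alpha\gamma}\,\omega^{\bar b}_{\bba\sba}\,h_{a\bar b}$ vanishes at $p$ and so does its trace-free component $S_{\alpha\bba\gamma\sba}(p)$, i.e.\ $p$ is a Chern--Moser CR umbilical point.

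For part (ii), assume $p$ is CR umbilical, so $S_{\alpha\bba\gamma\sba}(p)=0$. By the first paragraph the Hermitian expression $G_{\alpha\bba\gamma\sba}:=\omega^a_{\alpha\gamma}(p)\,\omega^{\bar b}_{\bba\sba}(p)\,h_{a\bar b}$ --- a sum of $N-n=\dim_{\CC}N^{1,0}_pM$ rank-one Hermitian forms in the symmetric tensors $\omega^a_{\alpha\gamma}(p)$ --- has vanishing totally trace-free component, i.e.\ it is ``pure trace''. The remaining, and main, step is to conclude $G\equiv0$, equivalently $\omega^a_{\alpha\gamma}(p)=0$ for all $a$; this is where the hypothesis $N\le 2n$, i.e.\ $N-n\le n$, enters. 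It is a linear-algebra fact --- valid because also $n\ge 2$, and proved as in the proof of Lemma~5.2 of \cite{ebenfelt2004rigidity} (cf.\ Huang's Lemma~3.2 in \cite{huang1999linearity}) --- that if $\sum_{a}\varphi^a_{\alpha\gamma}\,\overline{\varphi^a_{\beta\sigma}}$, a sum over $d\le n$ values of $a$ with each $\varphi^a$ symmetric in $(\alpha,\gamma)$, is pure trace, then $\varphi^a_{\alpha\gamma}=0$ for all $a$. (Sharpness: for the complex Whitney map $d=N-n=n+1$ and this expression is nonzero; see \cref{ex:whitney}.) Granting the fact, $\omega^a_{\alpha\gamma}(p)=0$, and by reality of the second fundamental form (cf.\ \cref{e:reality}) also $\II^\circ(p)=0$.

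I expect the last step of part (ii) to be the main obstacle: passing from the ``pure trace'' identity to the vanishing of $\II^\circ$ is a genuinely quartic, not quadratic, constraint on the second fundamental form, and so cannot simply be read off. One must combine the symmetry of $\omega^a_{\alpha\gamma}$ with the codimension bound --- for instance by evaluating the ``pure trace'' identity along suitably chosen directions where some $\varphi^a$ is ``extremal'' and running a rank/counting argument, as in the proof of Lemma~5.2 of \cite{ebenfelt2004rigidity}. The rest --- specializing the Gau\ss{} equation and reducing (ii) to this linear-algebra fact --- is routine.
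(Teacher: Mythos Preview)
Your proposal is correct and follows essentially the same route as the paper: specialize the Gau\ss{} equation \cref{e:gauss} with $\widetilde R=0$, read off (i) from the vanishing of the $\omega^a_{\alpha\gamma}$-term, and for (ii) take the totally trace-free part and invoke the Huang-type linear-algebra lemma (the paper cites Lemma~5.3 of \cite{ebenfelt2004rigidity}) under the codimension bound $N-n\le n$. Your write-up is in fact more detailed than the paper's terse argument; the only minor discrepancy is the lemma number you cite in \cite{ebenfelt2004rigidity}.
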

\begin{proof} If $\II^{\circ}(p) = 0$, then the Gauß equation at $p$ reduces to
	\begin{equation}
	R_{\alpha\bba\gamma\sba}\bigl|_p
	=
	|\mean|^2(h_{\alpha\bba}h_{\gamma\sba} + h_{\alpha\sba} h_{\gamma\bba})\bigl|_p.
	\end{equation}
	This implies that the traceless component of $R_{\alpha\bba\gamma\sba}$ vanishes at $p$ and hence (i) follows.
	
	Suppose that $p$ is CR umbilical and $N\leq 2n$. Then taking the traceless component of both sides of the Gauß equation \cref{e:gauss}, we obtain
	\begin{equation}
	\tf \omega_{\alpha\gamma}^a \omega_{\bba\sba}^{\bar{b}} h_{a\bar{b}}\bigl|_p = 0.
	\end{equation}
	Since $N\leq 2n$, we can argue as in Lemma~5.3 of \cite{ebenfelt2004rigidity}, using Huang's lemma, to deduce that $\omega_{\alpha\gamma}^a = 0$ at $p$. This proves (ii).
\end{proof}
\begin{cor}\label{cor:2um}
	Let $\iota \colon (M,\theta) \hookrightarrow (\mathbb{C}^N, \omega)$, $\omega:= i\partial\bar{\partial}\|Z\|^2$, be a strictly pseudoconvex pseudohermitian submanifold and $p\in M$. Suppose that $N\leq 2n$, then the following are equivalent:
	\begin{enumerate}[(i)]
		\item $p$ is a CR umbilical point of $M$,
		\item $\Ric|_p = (n+1)|\mean|^2 \iota ^{\ast}\omega|_p$,
		\item $R|_p = n(n+1)|\mean|_p^2$,
	\end{enumerate}
	Moreover, each implies that $A_{\alpha\gamma}|_p = 0$. Here $A_{\alpha\gamma}$ are the components of the pseudohermitian torsion.
\end{cor}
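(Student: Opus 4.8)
The plan is to feed the flatness of the ambient space into the Gauß equation recorded above. Since $\mathcal{X}=\mathbb{C}^N$ carries the flat metric we have $\widetilde R\equiv 0$, so, working in an orthonormal frame (so that $h_{\alpha\bba}=\delta_{\alpha\beta}$), the Gauß equation at $p$ becomes
\begin{equation*}
R_{\alpha\bba\gamma\sba}\big|_p=|\mean|^2\bigl(h_{\alpha\bba}h_{\gamma\sba}+h_{\alpha\sba}h_{\gamma\bba}\bigr)\big|_p-\bigl(\omega_{\alpha\gamma}^a\omega_{\bba\sba}^{\bar b}h_{a\bar b}\bigr)\big|_p,
\end{equation*}
which is \cref{cor:riclower} in pointwise form. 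Tracing once in the Levi form gives $R_{\alpha\bba}=(n+1)|\mean|^2 h_{\alpha\bba}-P_{\alpha\bba}$ with $P_{\alpha\bba}:=\sum_{\gamma}\omega_{\alpha\gamma}^a\omega_{\bba\bar\gamma}^{\bar b}h_{a\bar b}$, and tracing once more gives $R=n(n+1)|\mean|^2-\trace P$.

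The arithmetic heart of the matter is that $(P_{\alpha\bba})$ is a positive semidefinite Hermitian matrix: for any $v=(v^\alpha)$,
\begin{equation*}
P_{\alpha\bba}\,v^\alpha\overline{v^\beta}=\sum_{\gamma}h_{a\bar b}\,\bigl(\omega_{\alpha\gamma}^a v^\alpha\bigr)\overline{\bigl(\omega_{\beta\gamma}^b v^\beta\bigr)}\ge 0
\end{equation*}
because $(h_{a\bar b})$ is positive definite, and equality forces $\omega_{\alpha\gamma}^a v^\alpha=0$ for all $\gamma,a$. Letting $v$ run over the frame vectors, $P\big|_p=0$ if and only if $\omega_{\alpha\gamma}^a\big|_p=0$ for all indices, i.e. if and only if $\II^{\circ}(p)=0$. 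Reading off the two contractions: condition (ii) holds at $p$ exactly when $P\big|_p=0$, and condition (iii) holds at $p$ exactly when $\trace P\big|_p=0$; since $P\big|_p\ge 0$, the latter again forces $P\big|_p=0$. Hence (ii) $\Leftrightarrow\ \II^{\circ}(p)=0\ \Leftrightarrow$ (iii).

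To incorporate (i) I invoke \cref{prop:2um}: this is precisely the place where the hypothesis $N\le 2n$ (together with $\dim M\ge 5$) enters, since it yields $\II^{\circ}(p)=0$ if and only if $p$ is a Chern--Moser CR umbilical point. Chaining this with the previous paragraph completes the three-way equivalence. Finally, the identity $A_{\alpha\beta}=-i\,\omega_{\alpha\beta}^a\mean^{\bar b}h_{a\bar b}$ recorded above shows that the vanishing of all $\omega_{\alpha\gamma}^a$ at $p$ forces $A_{\alpha\gamma}\big|_p=0$, which is the last assertion.

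I do not expect a genuine obstacle here: the content of the corollary is essentially a repackaging of \cref{cor:riclower} and \cref{prop:2um}. The only step deserving a touch of care is the positivity-and-nondegeneracy argument for $P$ (together with the trivial observation that a nonnegative Hermitian matrix of vanishing trace is zero); everything else is a direct substitution into the Gauß equation plus an appeal to \cref{prop:2um}.
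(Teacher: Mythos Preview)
Your proposal is correct and follows essentially the same approach as the paper: trace the Gau\ss{} equation (with $\widetilde R=0$) to express $\Ric$ and $R$ in terms of $|\mean|^2$ and the nonnegative quantity $|\II^{\circ}|^2=\trace P$, observe that the vanishing of this quantity is equivalent to $\II^{\circ}(p)=0$, invoke \cref{prop:2um} for the link to CR umbilicality, and read off the torsion conclusion from \cref{e:gausstorsion}. The paper runs the implications cyclically as (i)$\Rightarrow$(ii)$\Rightarrow$(iii)$\Rightarrow$(i), whereas you route each condition through the hub $\II^{\circ}(p)=0$; the content is identical.
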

\begin{proof}
	Assume that (i) holds, then $\II^{\circ}(p) = 0$ by \cref{prop:2um}. Tracing the Gauß equation at $p$ implies that
	\begin{equation}
	\Ric = (n+1) |\mean|^2 \iota^{\ast}\omega.
	\end{equation} 
	This is (ii).
	
	Clearly (ii) implies (iii) by taking the trace.
	
	Assume that (iii) holds. The Gauß equation at $p$ implies that 
	\begin{equation}
		R = n(n+1) |\mean |^2 - |\II^{\circ}|^2.
	\end{equation} 
	Hence, (iii) implies that $\II^{\circ} = 0$ and hence $p$ is CR umbilical by \cref{prop:2um}.
	
	The last conclusion follows from \cref{e:gausstorsion}.
\end{proof}
\begin{proof}[Proof of \cref{thm:umbilichypersurface}]
By assumption, 
\begin{equation}
	\rho = \sum_{d = 1}^{N} |F^d|^2 + \psi,
\end{equation}
where $\psi$ is real-valued, $i\partial\bar{\partial} \psi =0$, and $F^d$'s are holomorphic in a neighborhood of $M$. Let $\theta = i\bar{\partial}\rho$, then $d\theta = \iota^{\ast} (i\partial\bar{\partial}\rho) = F^{\ast} (i\partial\bar{\partial}\|Z\|^2)$, and hence $F: = (F^1, \dots , F^N)$ is a semi-isometric CR immersion from $(M,\theta)$ into $(\mathbb{C}^N, i\partial\bar{\partial}\|Z\|^2)$. We then identify $M$ with its image $F(M) \subset \mathbb{C}^N$. By Li-Luk's formula,
\begin{equation}
	\Ric
	=(n+1) r(\rho) (\iota^{\ast} \omega)|_{H(M)}
	- \iota^{\ast}(i\partial\bar{\partial} \log J(\rho))|_{H(M)}.
\end{equation}
On the other hand, $|\mean|^2 = r(\rho)$ by \cref{prop:transmean}. Taking the trace of Gauß equation \cref{e:gauss}, we obtain
	\begin{align}\label{e:j}
	\iota^{\ast}(i\partial\bar{\partial} \log J(\rho))|_{H(M)}
	& =
	(n+1) |H|^2 (\iota^{\ast} \omega)|_{H(M)} - \Ric \notag \\
	& =
	h^{\alpha\bba}\omega_{\alpha\gamma}^a \omega_{\bba\sba}^{\bar{b}}h_{a\bar{b}} \, \theta^{\gamma} \wedge \theta^{\sba}|_{H(M)}.
	\end{align}
	The last expression is manifestly nonnegative as a $(1,1)$-form and hence \cref{e:umbilic} follows. The equality occurs if and only if $\omega_{\alpha\gamma}^a = 0$ if and only if the trace with respect to $h^{\gamma\sba}$ of each side of \cref{e:j} vanishes.
	
	When $N \leq 2n$, the equality in \cref{e:umbilic} occurs at $p$ if and only if $p$ is a CR umbilical point, by \cref{cor:2um}. The proof is complete.
\end{proof}
\begin{proof}[Proof of \cref{cor:webster}]
	Let $\rho := \|z_j\|^2 - \Re (z^t A z ) -1$, $A = \mathrm{diag}(A_1,A_2,\dots, A_n)$, be a defining function for $E$ which satisfies the condition in \cref{thm:umbilichypersurface}. Explicitly,
	\begin{equation}
	(\log (J(\rho))_{j\kba}
	=
	|\partial \rho|^2 A_jA_k \delta_{jk} - A_jA_k \rho_{\jba}\rho_{k} \geq 0
	\end{equation}
	(as an inequality of Hermitian matrices.) Moreover, if there are at least two nonzero in $A_k$'s, then the complex Hessian of $\log J(\rho)$ has at least two positive eigenvalues (at every points on $E(A)$) and hence the proof follows.
\end{proof}
\begin{rem}\label{rem:elip}
If $A$ has exactly one nonzero element, say $A_{1} \ne 0$, then $p$ is a CR umbilical point iff $|\partial \rho|^2 - |\rho_1|^2 = 0$ at $p$. This is the case iff $z_2 =\cdots = z_{n+1} =0$ and $z_1$ verifies $|z_1|^2 + \Re (A_1 z_1^2) =1$. The CR umbilical locus is an ellipse in the $z_1$-coordinate plane.
\end{rem}
\section{A Beltrami type formula for $\Box_b$ and a Takahashi type theorem}\label{sec:bel}
Explicit formulas for the Kohn Laplacian that are analogous to the well-known Beltrami formula for the Laplacian were derived by Li, Lin, and the author in \cite{li--son,li--lin--son}. In the notations of this paper, they can be reformulated as follows.
\begin{prop}[cf. \cite{li--son,li--lin--son}]\label{prop:beltrami}
	Let $(\mathcal{X}, \omega)$ be a Kähler manifold, $\iota \colon M \to \mathcal{X}$ a semi-isometric CR immersion, and $\mean$ the corresponding $(1,0)$-mean curvature field. If $f$ is the restriction of a (possibly of complex-valued) pluriharmonic function $\widetilde{f}$ in a neighborhood of~$M$ in $\mathcal{X}$, then
	\begin{equation}\label{e:klpluriharmonic}
	\Box_b f 
	=
	-n\, \overline{\mean} \widetilde{f}.
	\end{equation}
	In particular, if $\{Z^A\colon A =1,2,\dots , N\}$ is a local holomorphic coordinate system in a neighborhood of $M$, then
	\begin{equation}\label{e:belform}
	\overline{\mean} = -\frac{1}{n} \sum_{A=1}^{N} \Box_b \left(\overline{Z}^A|_M \right) \partial_{\bar{A}}.
	\end{equation}
\end{prop}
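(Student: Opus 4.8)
The plan is to compute $\Box_b f$ directly in a local orthonormal frame, using the Gauß formula \eqref{e:2.5a} together with the pluriharmonicity of the ambient extension, and then to deduce \eqref{e:belform} by specializing \eqref{e:klpluriharmonic} to the antiholomorphic coordinate functions. Fix $p\in M$, choose holomorphic coordinates $Z^A$ on a neighborhood of $F(p)$ in $\mathcal{X}$ and an orthonormal frame $\{Z_\alpha\}_{\alpha=1}^n$ for $T^{1,0}M$ near $p$, extended to $(1,0)$-vector fields on a neighborhood in $\mathcal{X}$; write $Z_\alpha = a_\alpha^A\,\partial_A$. On functions the Kohn Laplacian has the local expression
\[
\Box_b f = -\sum_{\alpha=1}^n\Big(Z_\alpha Z_{\bar\alpha} f - (\nabla_{Z_\alpha}Z_{\bar\alpha})\, f\Big),
\]
$\nabla$ being the Tanaka--Webster connection; this is $-\operatorname{tr}\big(\nabla^{1,0}\bar\partial_b f\big)$ written out in the frame (cf.\ \cite{li--son,li--lin--son}, or the definition $\Box_b=\bar\partial_b^{\ast}\bar\partial_b$).

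The crux of the argument is the identity $Z_\alpha Z_{\bar\alpha} f = (\widetilde{\nabla}_{Z_\alpha}Z_{\bar\alpha})\widetilde f$. Since $Z_\alpha,Z_{\bar\alpha}$ are tangent to $M$ and $f=\widetilde f|_M$, one may replace $f$ by $\widetilde f$ on the left; then, using that $\widetilde{\nabla}_{\partial_A}\partial_{\bar B}=0$ for the Chern connection in a holomorphic frame and that the $(1,1)$-part of the complex Hessian of $\widetilde f$ vanishes (this is exactly pluriharmonicity, and the only place the Kähler hypothesis enters), a one-line computation with $Z_\alpha=a_\alpha^A\partial_A$ gives the claim. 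Substituting into the display above and recognizing the bracket as the second fundamental form via \eqref{e:2.5a},
\[
\Box_b f = -\sum_{\alpha=1}^n\big(\widetilde{\nabla}_{Z_\alpha}Z_{\bar\alpha}-\nabla_{Z_\alpha}Z_{\bar\alpha}\big)\widetilde f = -\sum_{\alpha=1}^n \II(Z_\alpha,Z_{\bar\alpha})\,\widetilde f,
\]
and by \eqref{e:sffbu} we have $\II(Z_\alpha,Z_{\bar\alpha})=\langle Z_\alpha,Z_{\bar\alpha}\rangle\,\overline{\mean}$, so the sum equals $n\,\overline{\mean}$ and \eqref{e:klpluriharmonic} follows.

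For \eqref{e:belform} I would apply \eqref{e:klpluriharmonic} with $\widetilde f=\overline{Z}^A$, which is antiholomorphic, hence pluriharmonic: since $\overline{\mean}\,\overline{Z}^A$ is precisely the $\partial_{\bar A}$-coefficient of the $(0,1)$-field $\overline{\mean}$, we get $\Box_b(\overline{Z}^A|_M)=-n$ times that coefficient, and summing $-\tfrac1n\,\Box_b(\overline{Z}^A|_M)\,\partial_{\bar A}$ over $A$ reconstructs $\overline{\mean}$. Apart from the Hessian identity above, the proof is pure bookkeeping of the two connections and of conjugation conventions, so there is no real obstacle. One could alternatively obtain \eqref{e:klpluriharmonic} by integrating $\langle\bar\partial_b f,\bar\partial_b\varphi\rangle$ by parts against an arbitrary test function $\varphi$ and applying the Weingarten equation \cref{prop:wein}, but the frame computation above is the most transparent route.
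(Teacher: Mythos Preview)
Your proof is correct and follows essentially the same route as the paper's own argument: both use the Gau\ss\ formula \eqref{e:2.5a} to write $\nabla_{\alpha}\nabla_{\bar\beta}f=\widetilde{\nabla}_{\alpha}\widetilde{\nabla}_{\bar\beta}\widetilde{f}+\II(Z_{\alpha},Z_{\bar\beta})\widetilde{f}$, invoke pluriharmonicity to kill the ambient Hessian term, and then trace to obtain $-n\overline{\mean}\widetilde{f}$; the derivation of \eqref{e:belform} by specializing to $\widetilde{f}=\overline{Z}^A$ is also identical. One small quibble: your remark that the K\"ahler hypothesis enters only through the vanishing of the $(1,1)$-Hessian is not quite right---pluriharmonicity and $\widetilde{\nabla}_{\partial_A}\partial_{\bar B}=0$ hold for the Chern connection on any Hermitian manifold; the K\"ahler assumption is used (implicitly, via \eqref{e:peter}) in the identity $\sum_\alpha\II(Z_\alpha,Z_{\bar\alpha})=n\overline{\mean}$ you quote from \eqref{e:sffbu}.
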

\begin{proof} By the Gauß formula \cref{e:2.5a}, for any smooth extension $\widetilde{f}$ of $f$ to a neighborhood of~$M$ in $\mathcal{X}$, we have
	\begin{align}
	\nabla_{\alpha} \nabla_{\bba } f
	 = 
	\widetilde{\nabla}_{\alpha} \widetilde{\nabla}_{\bba } \widetilde{f} + \II(Z_{\alpha}, Z_{\bba }) \widetilde{f},
	\end{align}	
	By assumption, we can take $\widetilde{f}$ to be pluriharmonic and thus $\widetilde{\nabla}_{\alpha} \widetilde{\nabla}_{\bba } \widetilde{f}=\partial_\alpha \bar{\partial}_{\beta} \widetilde{f} = 0$. Consequently,
	\begin{align}
	\Box_b f
	& = 
	- h^{\alpha\bba } \nabla_{\alpha} \nabla_{\bba } f \notag \\
	& = 
	- h^{\alpha\bba } \II(Z_{\alpha} , Z_{\bba }) \widetilde{f} \notag \\
	& = - n \overline{\mean} \widetilde{f}.
	\end{align}
	Write $\mean = \mean^{A} \partial_{A}$ in local coordinates. Then
	\begin{equation}
	\Box_b \left(\overline{Z}^A|_M \right) = -n \overline{\mean} \overline{Z}^A = -n \sum_{B} \mean^{\overline{B}} \partial_{\overline{B}} \overline{Z}^A = -n \mean^{\overline{A}},
	\end{equation}
	from which \cref{e:belform} follows.
\end{proof}
\begin{defn}[\cite{dragomir1995pseudohermitian,ebenfelt2004rigidity}]
	Let $(M,\theta)$ and $(N,\eta)$ be strictly pseudoconvex pseudohermitian manifolds and $F \colon (M,\theta) \to (N,\eta)$ a CR immersion. We say that $F$ is a \emph{pseudohermitian immersion} if $F^{\ast}\eta = \theta$ and $F_{\ast} T = T'$, where $T$ and $T'$ are the Reeb fields that correspond to $\theta$ and $\eta$, respectively.
\end{defn}
If $F$ is a pseudohermitian immersion, then the pair of pseudohermitian structures $(\theta, \eta)$ is admissible in the sense of \cite{ebenfelt2004rigidity}.

The following is a CR analogue of the Takahashi theorem \cite{takahashi1966minimal}.

\begin{thm}[Takahashi-type theorem]\label{cor:taka} Let $(M,\theta)$ be a pseudohermitian manifold and let $Z \colon M \to (\mathbb{C}^{N}, \omega:=i\partial\bar{\partial}\|Z\|^2)$ be a semi-isometric CR immersion. Suppose that $\Box_b \overline{Z} = \lambda \overline{Z}$, componentwise, then
	\begin{enumerate}[(i)]
		\item $\lambda > 0$,
		\item $Z(M) \subset r\cdot \mathbb{S}^{2N-1}$, where $r = \sqrt{n/\lambda}$,
		\item $Z \colon M \to r\cdot \mathbb{S}^{2N-1}$ is a pseudohermitian immersion.
	\end{enumerate}
Conversely, if $F \colon M \to r\cdot \mathbb{S}^{2N-1}$ is a pseudohermitian immersion, then $\Box_b \overline{F} = (n/r^2) \overline{F}$.
\end{thm}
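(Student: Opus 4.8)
The plan is to transcribe Takahashi's classical argument for isometric immersions $x\colon M\to\RR^m$ with $\Delta x=\lambda x$, using the Beltrami-type identity \cref{e:belform} of \cref{prop:beltrami} in place of the classical Beltrami formula $\Delta x=-n\vec H$. Throughout I write $E:=\sum_A Z^A\,\partial_{Z^A}$ for the holomorphic Euler (position) field of $\mathbb{C}^N$, viewed as a $(1,0)$-field along the image; the $Z^A$ are global holomorphic coordinates near the image, so \cref{e:belform} applies. For the \textbf{forward direction}, substituting $\Box_b\overline{Z}^A=\lambda\overline{Z}^A$ into \cref{e:belform} gives $\mean=-\tfrac{\bar\lambda}{n}E$. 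Since $\mean-\overline{\mean}=iT\ne0$ by \cref{e:meanreeb}, $\mean$ is nowhere zero, so $\lambda\ne0$. As $\mean$ is a section of $N^{1,0}M$ by \cref{prop:basicsff}, it is Hermitian-orthogonal to $T^{1,0}M$; writing $\langle\mean,Z_{\ast}Z_\alpha\rangle=0$ out explicitly and taking conjugates (using that $Z^A$ is CR, so $(1,0)$-vectors kill $\overline{Z}^A$) shows that $\rho:=\|Z\|^2$, restricted to the image, satisfies $Z_\alpha\rho=0$ for all $\alpha$, hence $d\rho=(T\rho)\,\theta$. On the other hand $\mean=iT^{1,0}$ (again \cref{e:meanreeb}), so $T^{1,0}=-i\mean=\tfrac{i\bar\lambda}{n}E$, which gives $TZ^A=\tfrac{i\bar\lambda}{n}Z^A$ along the image and therefore $T\rho=\tfrac{2}{n}(\Im\lambda)\rho$. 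Thus $d\rho=\tfrac{2}{n}(\Im\lambda)\,\rho\,\theta$; differentiating and using $\theta\wedge\theta=0$ yields $\tfrac{2}{n}(\Im\lambda)\,\rho\,d\theta=0$. As $\rho\not\equiv0$ (otherwise $Z$ is constant) and $d\theta$ is nondegenerate on $H(M)$, this forces $\Im\lambda=0$ and then $d\rho\equiv0$, so $\rho\equiv r^2$ is a positive constant ($M$ being connected) and $Z(M)\subset r\cdot\mathbb{S}^{2N-1}$.

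To \textbf{identify $r$ and prove (iii)}, let $\phi\colon M\to r\cdot\mathbb{S}^{2N-1}$ be $Z$ with restricted target and $\theta_0:=\iota^{\ast}(i\bar\partial\|Z\|^2)$ the standard structure. Both $\theta$ and $\phi^{\ast}\theta_0$ are contact forms on $M$ with kernel $H(M)$, and $d(\phi^{\ast}\theta_0)=Z^{\ast}(i\partial\bar\partial\|Z\|^2)=d\theta$; writing $\phi^{\ast}\theta_0=f\theta$ and restricting $df\wedge\theta=(1-f)d\theta$ to $H(M)$ forces $f\equiv1$, so $\phi^{\ast}\theta_0=\theta$. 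Hence the Reeb field $T$ of $\theta$ is tangent to $r\cdot\mathbb{S}^{2N-1}$ with $\theta_0(T)=\theta(T)=1$, so $T=T_0+W$ with $T_0$ the Reeb field of $\theta_0$ and $W$ a section of $T^{1,0}(r\cdot\mathbb{S}^{2N-1})\oplus T^{0,1}(r\cdot\mathbb{S}^{2N-1})$. Applying \cref{prop:transmean} to $\rho_0:=\|Z\|^2-r^2$ gives $\mean_{r\cdot\mathbb{S}^{2N-1}}=-\tfrac{1}{r^2}E$, hence $T_0^{1,0}=-i\mean_{r\cdot\mathbb{S}^{2N-1}}=\tfrac{i}{r^2}E$; comparing the $(1,0)$-parts of $T=T_0+W$ with $T^{1,0}=\tfrac{i\bar\lambda}{n}E$ gives
\begin{equation}
  \frac{i\bar\lambda}{n}\,E=\frac{i}{r^2}\,E+W^{1,0}.
\end{equation}
Since $E$ spans the line $N^{1,0}(r\cdot\mathbb{S}^{2N-1})$ while $W^{1,0}\in T^{1,0}(r\cdot\mathbb{S}^{2N-1})$, a complementary subspace, the two sides agree termwise: $\bar\lambda/n=1/r^2$ — hence $\lambda=n/r^2>0$, which is (i) together with $r=\sqrt{n/\lambda}$ — and $W^{1,0}=0$. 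As $W=T-T_0$ is real, $W=0$, i.e.\ $\phi_{\ast}T=T_0$; combined with $\phi^{\ast}\theta_0=\theta$ this is exactly the pseudohermitian-immersion statement (iii).

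For the \textbf{converse}, if $F\colon M\to r\cdot\mathbb{S}^{2N-1}$ is pseudohermitian, then $\iota\circ F$ is semi-isometric for $(M,\theta)$ and $F_{\ast}T=T_0$, so as above the mean curvature of $M\hookrightarrow\mathbb{C}^N$ is $\mean=iT_0^{1,0}=-\tfrac{1}{r^2}E$, i.e.\ $\mean^{\bar A}=-\tfrac{1}{r^2}\overline{F}^A$; substituting into \cref{e:belform} gives $\Box_b\overline{F}^A=-n\,\mean^{\bar A}=\tfrac{n}{r^2}\overline{F}^A$, as required.

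I expect the \textbf{main obstacle} to be the upgrade in the forward direction from ``$d\rho$ vanishes on $H(M)$'' to ``$\rho$ is constant'': the Euler-field identity controls $\rho$ only along the contact distribution, and one must separately produce the transverse derivative $TZ^A=\tfrac{i\bar\lambda}{n}Z^A$ and then play it off against $d^2\rho=0$ and the nondegeneracy of $d\theta|_{H(M)}$ to kill $\Im\lambda$ and $T\rho$ simultaneously — a short but essential closing-up step. (Alternatively, \cref{prop:kahlerpotential} shows directly that $\rho$ is CR-pluriharmonic, from which $\rho$ being constant is immediate.) The other place requiring care is bookkeeping the splitting $T^{1,0}\mathbb{C}^N=T^{1,0}(r\cdot\mathbb{S}^{2N-1})\oplus\mathbb{C}E$ along the sphere, which is precisely what makes the value of $r$ and part (iii) drop out.
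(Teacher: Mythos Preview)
Your proof is correct and follows essentially the same route as the paper's: the Beltrami-type identity \cref{e:belform} gives $\mean=-\tfrac{\bar\lambda}{n}E$, normality of $\mean$ to $T^{1,0}M$ forces $Z_{\alpha}\|Z\|^2=0$, hence $Z(M)$ lies in a sphere, and then one compares $\theta$ with the pull-back of the standard contact form and matches Reeb fields. You are in fact more careful than the paper at precisely the step you flag as the main obstacle: the paper simply asserts that $W\cdot\|Z\|^2=0$ for all $W\in T^{1,0}M$ makes $\|Z\|^2$ constant, without supplying the $d^2\rho=0$ closing argument or addressing the reality of~$\lambda$; your computation of $T\rho=\tfrac{2}{n}(\Im\lambda)\rho$ and the nondegeneracy argument fill this in cleanly. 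Your identification of $r$ via the splitting $T^{1,0}\mathbb{C}^N|_{r\mathbb{S}}=T^{1,0}(r\mathbb{S}^{2N-1})\oplus\mathbb{C}E$ is a minor variant of the paper's appeal to the transverse curvature of the sphere, and arguably tidier.
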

\begin{proof}
Let $W = W^A \partial_A$ be a $(1,0)$-vector field. Suppose that $W|_M$ is tangent to $M$, then
\begin{equation}
	\langle T, W \rangle = \langle T, W \rangle_{L_{\theta}} =0.
\end{equation}
On the other hand, since $\mean -\overline{\mean} = iT$, we have
\begin{align}
	0 = -i \langle W , T \rangle 
	=
	\langle W , \overline{\mean} \rangle
	& =
	\sum_{A = 1}^{N} W^A \overline{\mean^{A}} \notag \\
	& =
	\sum_{A = 1}^{N} W^A\left(-\frac{1}{n} \Box_b\left(\overline{Z^A}|_M\right)\right) \notag \\
	& = -\frac{ \lambda}{n} \sum_{A = 1}^{N}W^A \overline{Z^A} \notag \\
	& = -\frac{ \lambda}{n}	W \cdot \left(\|Z\|^2\right).
\end{align}
Thus, $\|Z\|^2$ is a positive constant on $M$ and this proves (ii) for some $r>0$.

Let $\Theta_r$ be the standard pseudohermitian structure on $r\cdot \mathbb{S}^{2N-1}$. Thus, $\varphi:=Z|_M \colon M \to r\cdot \mathbb{S}^{2N-1} $ is a CR immersion of $M$ into the sphere and hence 
\begin{equation}
	e^u \theta = \varphi^{\ast} \Theta_r,
\end{equation}
for some function $u$. Thus, 
\begin{align}
 e^u d\theta|_{H(M)} = d\left(\varphi^{\ast} \Theta_r \right)|_{H(M)} 
	& =
	d(\varphi^{\ast}(\iota^{\ast} \bar{\partial} \|Z\|^2))|_{H(M)} \notag \\
	& =
	(\iota \circ \varphi)^{\ast} (i\partial\bar{\partial} \|Z\|^2 )|_{H(M)} \notag \\
	& =
	d\theta|_{H(M)}.
\end{align}
Hence $u = 0$ and thus $\varphi^{\ast} \Theta_r = \theta$, as desired.

To show that $\varphi$ is pseudohermitian, observe that on $M$, $T = -i (\mean - \bar{\mean})$ coincides with the restriction of the Reeb field of $r \cdot \mathbb{S}^{2N-1}$ on $M$. This proves (iv).

Finally, if (iv) holds, then $|\mean |^2$ coincides with the transverse curvature of the sphere $r\cdot \mathbb{S}^{2N-1}$, i.e., $|\mean |^2 = r^2$, and hence (i) and (iii) follows immediately.	
\end{proof}
\begin{example}
Let $(\mathbb{S}^{2n+1},\Theta)$ be the unit sphere with the standard pseudohermitian structure. For each $q\geq 1$, let $H = (H^1,H^2,\dots,H^N)$ be a CR mapping from $\mathbb{S}^{2n+1} \to \mathbb{C}^N$ whose components $H^j$'s are the restrictions of homogeneous polynomials of degree $q$. If $H$ is a semi-isometric CR immersion, i.e., $d\Theta = H^{\ast}(i\partial\bar{\partial} \|Z\|^2)$ on $\mathbb{S}^{2n+1}$, then \cref{cor:taka} implies that $H(\mathbb{S}^{2n+1}) \subset r \cdot \mathbb{S}^{2N-1}$, with $r = 1/\sqrt{q}$. Moreover, by a result of Rudin-D'Angelo (see \cite[page 159]{d1993several}), $H = U\circ H_q$, where $U$ is unitary and $H_q$ is the map defined by 
\begin{equation}
 	H_q(z) : = \frac{1}{\sqrt{q}}\left( \cdots, \sqrt{\binom{q}{\alpha}}\, z^{\alpha}, \cdots\right), \quad |\alpha| = q.
\end{equation}
We remark that $H_q$ is also minimal as Riemannian immersion of $\mathbb{S}^{2n+1}$ into the sphere $r\cdot \mathbb{S}^{2N-1}$, both are equipped with the standard metric, see \cite{dragomir--tomassini}.
\end{example}
	
\section{Total pseudohermitian tension and an eigenvalue estimate for $\Box_b$}\label{sec:tension}

Let $(M,\theta)$ be a pseudohermitian manifold and $(\mathcal{X},\omega)$ a Kähler manifold with the fundamental $(1,1)$-form $\omega$. Let $f\colon (M,\theta) \to (\mathcal{X},\omega)$ be a $\mathcal{C}^2$ map from $M$ into a Kähler 
manifold $\mathcal{X}$ with the Kähler form $\omega$. In \cite{li--son/proc},
Li and the present author introduced and studied a notion of pseudohermitian harmonic maps. This notion is similar to that of harmonic maps between Kähler manifolds. Namely, we define the $\bar{\partial}_b$-energy functional by
\begin{equation}\label{e:energy}
E[f]:=
\int_M g_{I\bar{J}} f^I_{\aba} f^{\bar{J}}_{\beta} h^{\beta\aba} d\vol_{\theta}.
\end{equation}
Here, $Z^J$ is the local coordinates on $\mathcal{X}$, $f^J:=Z^J \circ f$, and $g_{I\bar{J}}$ is the K\"ahler metric tensor.
The Greek indices indicate the derivative along CR and anti-CR directions. Clearly, $E[f]$ is real-valued and nonnegative, $E[f] = 0$ if and only if $f$ is a CR map. A critical point of $E[\cdot]$ is 
called a \textit{pseudohermitian harmonic} map in \cite{li--son/proc}. The associated Euler-Lagrange equation for $E[f]$ is given by the vanishing of a $(1,0)$-vector field which is called \textit{pseudohermitian tension field}. Namely, for a $\mathcal{C}^2$ map $f$,
the tension field $\tau[f]$ is the $(1,0)$-vector field along $f(M)$ given by
\begin{equation}
\tau[f]
:=
h^{\beta\aba} \left( f^{I}_{\aba, \beta} + \Gamma^I_{JK} f^J_{\aba} f^{K}_{\beta} \right) e_I, \quad e_I: = \partial/\partial z^I.
\end{equation}
Then, $f$ is {pseudohermitian harmonic} iff $\tau[f] = 0$.

The following generalizes the Reilly-type estimate for the first positive eigenvalue of the Kohn Laplacian. Its proof use similar ideas as in \cite{li--lin--son}.

\begin{prop}\label{prop:tensionei}
Let $(M^{2n+1},\theta)$ be a compact strictly pseudoconvex pseudohermitian manifold and $f \colon M\to (\mathbb{C}^d, \omega)$, $\omega:=i\partial\bar{\partial} \|Z\|^2$, a $\mathcal{C}^2$-map. 
If $f$ is \emph{not} a CR map, then $E[f] \ne 0$ and
		\begin{equation}\label{e:est1}
			\lambda_1
			\leq 
			\frac{1}{E[f]}\int_M \left|\tau[f]\right|^2_{\omega} d\vol_{\theta}.
		\end{equation}
If the equality occurs, then for each $I=1,2,\dots , d$, $\tau(f)^I$ is an eigenfunction corresponding to $\lambda_1$, provided that it is not identically zero.
\end{prop}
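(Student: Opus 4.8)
The plan is to run a variational (Rayleigh-quotient) argument analogous to Reilly's classical proof, replacing the coordinate functions of an isometric immersion by the components $f^I$ of the map $f$ and using the Beltrami-type identity of \cref{prop:beltrami} adapted to a general $\mathcal{C}^2$ map. First I would record the pointwise ``Bochner'' identity for the Kohn Laplacian acting on $\overline{f^I}$: writing $b^I := \Box_b\,\overline{f^I} = -h^{\alpha\bba}\nabla_\alpha\nabla_{\bba}\,\overline{f^I}$ and using the Gauß-type decomposition $\nabla_\alpha\nabla_{\bba}\,\overline{f^I} = \partial_\alpha\bar\partial_\beta\overline{f^I} + (\text{second fundamental form term})$ exactly as in the proof of \cref{prop:beltrami}, but now \emph{without} assuming $f$ is CR or pluriharmonic, one gets $\overline{b^I} = \overline{\tau[f]^{\bar I}}$ up to the Kähler identity $\partial_\alpha\bar\partial_\beta\overline{f^I}=\overline{\partial_{\bar\alpha}\partial_{\beta}f^{\bar I}}$; more precisely $\Box_b\,\overline{f^I}$ equals (minus $n$ times) the $I$-component of a quantity built from $h^{\beta\bba}(f^I_{\bba,\beta}+\Gamma^I_{JK}f^J_{\bba}f^K_\beta)$, i.e. the tension field $\tau[f]$. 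This is the step that ties $\int_M|\tau[f]|^2_\omega\,d\vol_\theta$ to $\sum_I\int_M|\Box_b\overline{f^I}|^2\,d\vol_\theta = \sum_I\int_M |\bar\partial_b\overline{f^I}|^2_{\Box_b}$-type expressions.

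Next I would handle the normalization. Since $M$ is connected and compact, after subtracting constants we may assume $\int_M f^I\,d\vol_\theta = 0$ for every $I$ (this changes neither $E[f]$, which only sees derivatives, nor $\tau[f]$). The functions $\overline{f^I}$ are then admissible test functions for the variational characterization
\[
\lambda_1 = \inf\left\{\frac{\int_M u\,\Box_b u\,d\vol_\theta}{\int_M |u|^2\,d\vol_\theta}\ :\ u\in\mathrm{Dom}(\Box_b),\ u\perp \ker\Box_b\right\},
\]
so that $\int_M u\,\Box_b u\,d\vol_\theta \ge \lambda_1\int_M|u|^2\,d\vol_\theta$ for each $u=\overline{f^I}$. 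Here I use that $\ker\Box_b$ consists of (conjugates of) CR functions together with constants and that the real-valued first eigenvalue $\lambda_1$ controls both the holomorphic and anti-holomorphic pieces by reality; a small point to check is that $u\,\Box_b u$ integrates to $\int_M|\bar\partial_b u|^2$ so the quadratic form is the right one. Summing the inequality over $I$ and integrating by parts once more to rewrite $\sum_I\int_M\overline{f^I}\,\Box_b\overline{f^I}\,d\vol_\theta$ as $\sum_I\int_M |\bar\partial_b\overline{f^I}|^2\,d\vol_\theta$, which by definition of $E[\cdot]$ in \cref{e:energy} (and the Kähler-flatness $g_{I\bar J}=\delta_{IJ}$ for $\mathbb{C}^d$) equals $E[f]$, one obtains $\lambda_1\,E[f]\le \sum_I\int_M |\Box_b\overline{f^I}|^2\,d\vol_\theta$; combined with Step 1 identifying the right-hand side with $\int_M|\tau[f]|^2_\omega\,d\vol_\theta$, this is \cref{e:est1}. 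That $E[f]\ne 0$ when $f$ is not CR is immediate from the definition, as already noted in the text.

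For the equality case: equality forces equality in the Rayleigh quotient for every $\overline{f^I}$ that is not $\perp$-trivial, i.e. each $\overline{f^I}$ lies in the $\lambda_1$-eigenspace once its $\ker\Box_b$-component is removed; but we arranged $\int_M f^I=0$, and the CR part of $\overline{f^I}$ (a conjugate-CR function) would have to be separately orthogonal — running the argument componentwise shows the only surviving possibility is $\Box_b\overline{f^I}=\lambda_1\overline{f^I}$ whenever $\overline{f^I}\not\equiv 0$, and then by Step 1 $\tau[f]^I = -\tfrac1n\,\overline{\Box_b\overline{f^I}}\cdot(\text{const})$ is (a constant multiple of) an $f^I$, hence an eigenfunction for $\lambda_1$. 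The main obstacle I anticipate is Step 1: carefully deriving the tension-field identity $\Box_b\overline{f^I} = -\,n^{-1}$-style expression $= (\text{component of }\tau[f])$ for a general $\mathcal{C}^2$ map, keeping track of the Christoffel terms $\Gamma^I_{JK}f^J_{\bba}f^K_\beta$ (absent in the CR/pluriharmonic case of \cref{prop:beltrami}) and verifying that the Kähler condition on $\mathcal{X}$ is exactly what makes the mixed second derivatives $\partial_\alpha\bar\partial_\beta \widetilde{f}$ drop out in the right way; for the target $\mathbb{C}^d$ this is elementary since $\Gamma\equiv 0$, so in fact the obstacle largely evaporates here and the computation reduces to the one in \cite{li--lin--son}.
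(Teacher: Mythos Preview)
Your outline has two genuine gaps that together break the argument.

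First, the detour through $\overline{f^I}$ is both unnecessary and incorrect. In the flat target $\mathbb{C}^d$ the Christoffel symbols vanish, so the tension field is simply
\[
\tau[f]^I = h^{\beta\aba} f^I_{\aba,\beta} = -\Box_b f^I,
\]
with no conjugation and no factor of $n$; the Beltrami formula of \cref{prop:beltrami} (which concerns the mean curvature vector and pluriharmonic functions) plays no role here. Your identification $\sum_I\int_M|\bar{\partial}_b\overline{f^I}|^2 = E[f]$ is false: $\bar{\partial}_b\overline{f^I}$ has components $\overline{f^I_{\alpha}}$, so that sum equals $\sum_I\|\partial_b f^I\|^2$, whereas $E[f]=\sum_I\|\bar{\partial}_b f^I\|^2$ by definition. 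These are different quantities for a non-CR map.

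Second, and more seriously, you invoke the wrong Rayleigh quotient. The characterization $\lambda_1\|u\|^2\le\langle u,\Box_b u\rangle$ requires $u\perp\ker\Box_b$, and on a compact strictly pseudoconvex CR manifold $\ker\Box_b$ is the (typically infinite-dimensional) space of CR functions, not just constants; subtracting the mean of $f^I$ does not project $\overline{f^I}$ off this kernel. Even granting the inequality, what you would obtain is $E[f]\ge\lambda_1\sum_I\|f^I\|^2$, which is a different estimate from \cref{e:est1}; your jump from there to $\lambda_1 E[f]\le\sum_I\|\Box_b\overline{f^I}\|^2$ is unjustified. The paper instead applies the \emph{second} variational inequality
\[
\lambda_1\int_M|\bar{\partial}_b u|^2\,d\vol_\theta \le \int_M|\Box_b u|^2\,d\vol_\theta,
\]
which follows from the spectral decomposition of $\Box_b$ and holds for \emph{every} $u$ with no orthogonality hypothesis (the kernel contributes zero to both sides). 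Applying this directly to $u=f^I$ and summing over $I$ gives \cref{e:est1} in one line, and equality forces each $\Box_b f^I=\tau[f]^I$ that is not identically zero to lie in the $\lambda_1$-eigenspace.
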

The integral in the right-hand side of \cref{e:est1} is called the \textit{total pseudohermitian tension} of the mapping.
\begin{proof}
 In the standard coordinates on $\mathbb{C}^n$, $\Gamma^{I}_{JK}=0$. Thus,
	\begin{equation}
	\tau(f)^{I}
	=
	h^{\beta\aba} f^{I}_{\aba, \beta}
	=
	-\Box_b f ^{I}.
	\end{equation}
	By the usual variational characterization of $\lambda_1$,
	\begin{align}\label{e:var1}
	\lambda_1 \int_M |\bar{\partial}_b f^I|^2 d\vol_{\theta}
	\leq
	\int_M |\Box_b f^I|^2 \, d\vol_{\theta}
	=
	\int_M |\tau(f)^I|^2 d\vol_{\theta}.
	\end{align}
	By \cref{e:energy} and the fact that $g_{I\bar{J}} = \delta_{IJ}$, we have
	\begin{equation}
	E[f] = \sum_{I=1}^N \int_M f^{I}_{\aba} \overline{f^{I}_{\bba}} h^{\beta\aba} d\vol_{\theta}
	=
	\sum_{I=1}^N \int_M |\bar{\partial}_b f^I|^2 d\vol_{\theta}.
	\end{equation}
	Summing over $I$ the equation \cref{e:var1}, we obtain
	\begin{equation}
	\lambda_1 E[f]
	=
	\lambda_1 \sum_{I=1}^d \int_M |\bar{\partial}_b f^I|^2 d\vol_{\theta}
	\leq \int_M \left|\tau(f)\right|^2_{\omega} d\vol_{\theta},
	\end{equation}
	as desired.	The characterization of equality is immediate from \cref{e:var1}.
\end{proof}

In the following we give a nontrivial example for which the pseudohermitian tension can be computed explicitly.
\begin{example}\rm 
	Let $M$ be the compact strictly pseudoconvex Reinhardt real hypersurface in $\mathbb{C}^{n+1}$ defined by $\rho = 0$, where
	\begin{equation}
	\rho
	=
	\sum_{j=1}^{n+1} \left( \log |z_j|^2\right)^2 - 1.
	\end{equation}
	Let $\theta:= i\bar{\partial} \rho|_{M}$ and $f^j(z) = \log |z_j|^2$. Consider $f = (f^j \colon j=1,2,\dots , n+1) \colon M \to \mathbb{C}^{n+1}$. Observe that 
	\begin{equation}
	|\bar{\partial}_b \log |z_j|^2 |^2 = \frac{1}{2} - \frac{1}{2} (\log |z_j|^2)^2.
	\end{equation}
	Thus,
	\begin{equation}
	\sum_{j=1}^{n+1} |\bar{\partial}_b \log |z_j|^2 |^2
	=
	\sum_{j=1}^{n+1} \left( \frac{1}{2} - \frac{1}{2} (\log |z_j|^2)^2 \right)
	=
	\frac{n+1}{2} - \frac{1}{2}\sum_{j=1}^{n+1} (\log |z_j|^2)^2
	=
	\frac{n}{2}.
	\end{equation}
	This implies that the $\bar{\partial}_b$-energy of $f$ is 
	\begin{equation}
	E[f] = \int_M \left(\sum_{j=1}^{n+1} |\bar{\partial}_b f^j |^2 \right) d\vol_{\theta}
	=
	\frac{n}{2} \vol(M,\theta).
	\end{equation}
	Since $\log |z_j|^2$ are pluriharmonic, from \cref{e:klpluriharmonic} we obtain
	\begin{equation}
	\tau[f]^j = - \Box_b (\log |z_j|^2) = - \frac{n}{2} \log |z_j|^2.
	\end{equation}
	Therefore, $\lambda = n/2$ is an eigenvalue of $\Box_b$ and hence $\lambda_1 \leq n/2$.
	On the other hand, the total tension can be computed as follows.
	\begin{equation}
	|\tau[f]|^2
	=
	\sum_{j=1}^{n+1} |\tau[f]^j|^2
	=
	\frac{n^2}{4} \left( \log |z_j|^2 \right)^2
	=
	\frac{n^2}{4}.
	\end{equation}
	Thus, we have
	\begin{equation}
	\lambda_1 \leq \frac{n}{E[f]}\int_M |\tau[f]|^2 d\vol_{\theta} = \frac{n}{2},
	\end{equation}
	as expected.
\end{example}
\begin{proof}[Proof of Theorem~\ref{thm:1}] 
	In view of \cref{prop:beltrami}, we shall compute the $\bar{\partial}_b$-energy and the total tension of the conjugate of the CR immersion $F$. The argument below is almost the same as in \cite{li--son} and similar to usual proofs for Riemannian case. Precisely,
	\begin{equation}
	\int_M |\tau(\bar{F})|^2_\omega
	=
	\int_M \sum_{J=1}^{N} | \Box_b \bar{F}^J |^2 = n^2\int_M |\mean |_{\omega}^2.
	\end{equation}
	In local computations, we can identify $M$ with its image $F(M)$ and write
	\begin{equation}
	Z_\alpha 
	=
	\mu_{\alpha}^I \partial_{I} .
	\end{equation}
	Here $I$ runs from $1$ to $N$.	Thus, $h_{\alpha\bba }
	=
	\langle Z_{\alpha}, Z_{\bba }\rangle 
	=
	\delta_{IJ} \mu_{\alpha}^I	\mu_{\bba }^{\bar{J}} 
	$.	On the other hand, $Z_\alpha z^I = \mu_{\alpha}^I$, and thus
	\begin{equation}
	\sum_{I=1}^{N}(Z_\alpha z^I)(Z_{\bba } \zba^I) = h_{\alpha\bba }.
	\end{equation}
	Here $z^I$'s are the coordinates in $\mathbb{C}^N$.	Consequently,
	\begin{equation}
	\sum_{I=1}^{N}|\bar{\partial}_b \zba^I|^2 = n,
	\end{equation}
	and therefore 
	\begin{equation}
		E[\bar{z}]
		=
	\int \sum_{I=1}^{N}|\bar{\partial}_b \zba^I|^2 = n\vol(M).
	\end{equation}
	Thus, the estimate \cref{e:est0} follows from \cref{prop:tensionei}.
	
	Suppose that the equality occurs. Put $b^I = \Box_b \zba^I$. Then clearly $\Box_b b^I = \lambda_1 b^I$.	This implies that $\Box_b(b^I -\lambda_1 \bar{z}^I) =0$ and hence there are globally defined CR functions
	$\varphi^I$'s such that $b^I = \lambda_1 \bar{z}^I + \varphi^I$. In particular, $b^I$'s are (complex-valued) CR-pluriharmonic eigenfunctions that correspond to~$\lambda_1$. The proof is complete.
\end{proof}

\section{The linearity of totally pseudohermitian umbilic CR immersions}\label{sec:6}
In this section, we prove \cref{thm:2}, \cref{cor:linearity}, and \cref{cor:3dim}. We first prove the following lemma.
\begin{lem}\label{lem:sp}
	Let $\iota \colon (M, \theta) \hookrightarrow (\mathbb{C}^{N},\omega)$, $\omega: = i\partial\bar{\partial}\|Z\|^2$, be a pseudohermitian submanifold, $\iota^{\ast}\omega = d\theta$. If $\II^{\circ} = 0$, then $M$ is CR spherical. Moreover, the pseudohermitian Ricci curvature and mean curvature are constant and satisfy
	\begin{align}\label{e:a}
	 \Ric &= (n+1)|\mean |^2 d\theta|_{H(M)},\\
	 \quad 
	 R &= n(n+1) |\mean |^2.
	\end{align}
	Furthermore, the pseudohermitian torsion vanishes, i.e., $A_{\alpha\beta} = 0$.
\end{lem}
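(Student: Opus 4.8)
The plan is to exploit the three Gauß--Codazzi--Mainardi relations already established, specialized to the umbilic situation $\II^{\circ}=0$, so that $\II(Z_\alpha,Z_\beta)=0$ for all $(1,0)$-vectors $Z_\alpha,Z_\beta$ while $\II(Z_{\aba},Z_\beta)=h_{\beta\aba}\overline{\mean}$. First I would read off the torsion statement: by \cref{e:gausstorsion}, $A_{\alpha\beta}=\langle\tau Z_\alpha,Z_\beta\rangle=-i\langle\II(Z_\alpha,Z_\beta),\overline{\mean}\rangle=0$, so $\tau\equiv 0$ immediately. Next, tracing the Gauß equation \cref{e:gauss} with the ambient curvature $\widetilde R$ of flat $\mathbb{C}^N$ equal to zero, and using that the $\II(X,Z)\cdot\II(\overline Y,\overline W)$ term drops out when $\II^{\circ}=0$, gives $R_{\alpha\bba\gamma\sba}=|\mean|^2(h_{\alpha\bba}h_{\gamma\sba}+h_{\alpha\sba}h_{\gamma\bba})$ at each point; contracting this yields the two displayed curvature identities \cref{e:a}, once $|\mean|^2$ is known to be constant.

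So the real content is twofold: (a) constancy of $|\mean|^2$, and (b) CR sphericity. For (a) I would invoke \cref{prop:constantmean}: it always holds that $D_Z\overline{\mean}=0$, and by \cref{e:dzovh}'s consequence it suffices to show $D_Z\mean=0$ for all $(1,0)$ vectors $Z$. This is exactly where the Codazzi--Mainardi equation \cref{e:normal2} enters. With $\widetilde R=0$ and $\II(\overline Y,Z)=\langle\overline Y,Z\rangle\mean$ umbilic, the equation reads
\begin{equation}
0 = -(D_{\overline Y}\II)(X,Z) + \langle\overline Y,Z\rangle D_X\mean + \langle X,\overline Y\rangle D_Z\mean,
\end{equation}
and $(D_{\overline Y}\II)(X,Z)=D_{\overline Y}(\II(X,Z))-\II(\nabla_{\overline Y}X,Z)-\II(X,\nabla_{\overline Y}Z)$ vanishes because $\II$ restricted to $(1,0)\times(1,0)$ is identically zero (umbilicity) and $D$ preserves types. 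Hence $\langle\overline Y,Z\rangle D_X\mean+\langle X,\overline Y\rangle D_Z\mean=0$ for all choices; taking $\overline Y$ paired nontrivially with $Z$ and using $n\geq 1$ to separate the two terms (e.g. choose $Z,X$ orthonormal so one coefficient vanishes) forces $D_Z\mean=0$. Then $Z\cdot|\mean|^2=\langle D_Z\mean,\overline{\mean}\rangle+\langle\mean,D_Z\overline{\mean}\rangle=0$, so $|\mean|^2$ is a real-valued anti-CR (hence CR) function, therefore locally constant; connectedness gives a global constant.

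For (b), with $R_{\alpha\bba\gamma\sba}$ now equal to the constant-curvature model tensor $|\mean|^2(h_{\alpha\bba}h_{\gamma\sba}+h_{\alpha\sba}h_{\gamma\bba})$, its traceless part — the pseudohermitian analogue of the Chern--Moser tensor in dimension $\geq 5$ — vanishes, giving local sphericity when $n\geq 2$ by Chern--Moser. The three-dimensional case $n=1$ is the main obstacle, since the Chern--Moser tensor is vacuous and one must instead verify the vanishing of Cartan's sixth-order umbilical tensor; here I would use that $\tau=0$ together with $R$ constant (and the transverse-curvature identification $R=2|\mean|^2$ from \cref{prop:transmean}) to compute Cartan's invariant directly, as indicated in the introduction — this computation, relying on the pseudohermitian Bianchi identities that degenerate in this dimension (cf. \cref{lem:sp}'s own statement referencing itself), is the delicate step and I expect it to occupy the bulk of the argument. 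I would close by noting that \cref{e:a} together with constancy of $|\mean|^2$ is exactly the package needed downstream (via \cref{cor:2um} and the eigenvalue extremality) to upgrade local sphericity to the global CR-equivalence with the sphere in \cref{thm:2}.
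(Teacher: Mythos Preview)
Your proposal is correct and follows essentially the same route as the paper. Two remarks on where you diverge slightly. First, for constancy of $|\mean|^2$ the paper splits cases: for $n\geq 2$ it invokes a Bianchi identity from Lee (his Eq.~(2.11)) to conclude directly that $R$ is constant, and reserves the Codazzi--Mainardi argument for $n=1$; your uniform use of Codazzi--Mainardi for all $n$ is arguably cleaner, though your parenthetical ``choose $Z,X$ orthonormal'' fails when $n=1$ --- simply take $X=Z$ to get $2\langle\overline Y,Z\rangle D_Z\mean=0$. Second, you significantly overestimate the three-dimensional sphericity step: once $A_{11}=0$, the paper cites a formula of Cheng--Lee expressing Cartan's tensor as $Q_1{}^{\bar 1}=\tfrac{1}{6}R_{,1}{}^{\bar 1}$, which vanishes immediately because $R=2|\mean|^2$ is constant. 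No lengthy computation is needed; the ``bulk of the argument'' you anticipate is a one-line citation.
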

\begin{proof} 
	By assumption, $\omega_{\alpha\gamma}^a=0$ and the Gauß equation \cref{e:gauss} implies that the Chern--Moser tensor of $M$ vanishes identically. Thus $M$ is locally spherical, provided that $n\geq 2$. Equations \cref{e:a} follows by tracing the Gauß equation. Moreover, $A_{\alpha\beta} = 0$, by \cref{e:gausstorsion}. If $n\geq 2$, then it follows from a Bianchi identity (Eq. (2.11) in \cite{lee1988pseudo}) that scalar curvature $R$ must be a constant and therefore, the mean curvature $|\mean|^2$ is a positive constant.
	
	For the case $n=1$, the arguments above do not work. For this case, we argue as follows. From \cref{prop:codazzi-mainardi}, we deduce that
	\begin{align}\label{e:63}
	0 = - (D_{\overline{Y}} \II) (X, Z) + \langle \overline{Y}, Z\rangle D_{X} H + 
	\langle X, \overline{Y}\rangle D_ZH.
	\end{align}
	By assumption, $\II(U,V) = 0$ for all $U,V \in T^{1,0}M$ and hence $(D_{\overline{Y}} \II) (X, Z) = 0$. Then \cref{e:63} reduces to
	\begin{align}
	0 = \langle \overline{Y} , Z \rangle D_X H + \langle X, \overline{Y}\rangle D_ZH.
	\end{align}
	Hence, $D_ZH = 0$ for all (1,0) tangent vector $Z$. By \cref{prop:constantmean}, $|H|^2$ is a constant.	To show that $M$ is CR spherical, observe that $A_{11} = 0$ and hence, by Lemma 2.2 of \cite{cheng1990burns} (cf. \cite{ebenfelt2017umbilical}), the Cartan's 6th-order tensor vanishes:
	\begin{equation}
		Q_{1}{}^{\bar{1}} = \frac{1}{6}R_{,1}{}^{\bar{1}} = 0,
	\end{equation}
	since $R = 2|H|^2$ is also a constant. Hence $M$ is locally CR spherical by Cartan's theorem. The proof is complete.
\end{proof}
\begin{proof}[Proof of \cref{thm:2}] 
By \cref{lem:sp}, the Ricci tensor $R_{\alpha\bba}$ has a positive lower bound and the torsion $A_{\alpha\beta} = 0$. Therefore, $M$ must be compact. The argument uses Meyer's theorem and is well-known to the folklore: If the torsion vanishes and $R_{\alpha\bba}$ has a positive lower bound, then the Ricci curvature of the Levi-Civita connection associated to some adapted Webster metric $g_{\theta} : = G_{\theta} + \epsilon \theta \odot \theta$ has a positive lower bound for some positive constant $\epsilon$ (see e.g. Proposition~8 in \cite{wang2015remarkable}) and hence $M$ must be compact (and has finite fundamental group) by Meyer's theorem \cite[Theorem 3.85]{gallot--hulin--lafontaine}.
	
Since $R_{\alpha\bba} = (n+1)|H|^2h_{\alpha\bba}$, with $|H|^2$ is constant, the lower bound for the first positive eigenvalue of Chanillo-Chiu-Yang \cite{chanillo--chiu--yang} (the case $n\geq 2$ is due to Chang-Wu; see \cite{li--son--wang}) reads $\lambda_1 \geq n|\mean|^2$. This bound also holds in three-dimensional case since $A_{\alpha\beta} = 0$ (The lower bound of \cite{chanillo--chiu--yang} requires that the so-called CR Paneitz operator is nonnegative. This condition is fulfilled when the pseudohermitian torsion vanishes identically.) On the other hand, the upper bound in \cref{thm:1} reads	
\begin{equation}
	\lambda_1 \leq \frac{n}{\vol(M)} \int |\mean|^2 = n |\mean|^2,
\end{equation}
also because $|\mean|^2$ is constant on $M$. Thus,
\begin{equation}
	\lambda_1 = n|\mean|^2.
\end{equation}
By the characterization of the CR sphere in \cite{li--son--wang}, $(M^{2n+1},\theta)$ must be globally CR equivalent to the standard CR sphere. (In fact, we do not need the result in \cite{li--son--wang} in its full generality as we already know that $A_{\alpha\beta}=0$. Under this condition, the characterization of the sphere in \cite{li--son--wang} also holds also in three-dimensional case.) Moreover, by the characterization of the equality in \cref{thm:2}, each function $b^{I} : = \Box_b \overline{F^{I}} $ is either a constant or an eigenfunction corresponding~$\lambda_1$. 

We can now assume that $M = \mathbb{S}^{2n+1} \subset \mathbb{C}^{n+1}$ is given by $M = \{\|z\|^2 = 1\}$. It is well-known that the eigenspace of $\Box_b$ corresponding to the first positive eigenvalue is spanned by the restrictions to $M$ of $\bar{z}^j$, $j = 1,2,\dots , n+1$ (i.e., the restrictions of the homogeneous harmonic polynomials of bidegree $(0,1)$). Since $b^I$ is an eigenfunction or a constant, there exist constants $c_1,\dots , c_{n+1}$ such that $b^{I} = \sum_{j=1}^{n+1} c_j \bar{z}^j|_{M}$ and hence
\begin{equation}
 \sum_{j=1}^{n+1} \bar{c}_j z_j|_{M}
 =
	\overline{b}^I = \overline{\Box}_b F^{I} = -n (\mean \cdot F^{I})|_{M}
	=
	-n(z^j \partial_j F^{I})|_{M}
\end{equation}
Here we use the fact that $F^{I}$ is CR and thus the Beltrami-type formula \cref{e:klpluriharmonic} can be applied for $\overline{F}^{I}$. By the well-known CR extension theorem, $F^{I}$ holomorphically extends to the unit ball and the identity $\sum_{j=1}^{n+1} \bar{c}_j z_j = nz^j \partial_j F^{I}$ holds on the unit ball. We can conclude (by considering the power series expansion at the origin) that $F^{I}$ is either a constant (when all $c_j = 0$) or a linear function. Thus, $F$ is an linear embedding.
\end{proof}
\begin{proof}[Proof of \cref{cor:3dim}]
	Let $\iota \colon \mathbb{S}^{2N-1} \hookrightarrow \mathbb{C}^N$ be the standard inclusion and let $F:= \iota \circ \phi$. Then $F\colon M \to \mathbb{C}^N$ is a semi-isometric CR immersion. By assumptions, $\II^{CR}_M(\phi) = 0$ and hence $F$ is totally pseudohermitian umbilic by \cref{prop:2sff}. Thus, $M$ is CR spherical by \cref{lem:sp}. Locally, there exists a local CR diffeomorphism $\varphi \colon \mathbb{S}^{2n+1} \to M^{2n+1}$. Put $G: = \phi \circ \varphi$, then $G$ extends to a global CR immersion $\widetilde{G}$ from $\mathbb{S}^{2n+1}$ into $\mathbb{S}^{2N-1}$, which is the restriction of a rational map with poles off $\mathbb{S}^{2n+1}$, by F.~Forstneri\v{c}'s theorem. The extension $\widetilde{G}$ also satisfies $\II_{G(\mathbb{S}^{2n+1})}^{CR} = 0$ globally, by rationality. In particular, \cref{thm:2} applies to $\iota \circ \widetilde{G}$ and gives the desired linearity. The proof is complete.
\end{proof}	
\begin{proof}[Proof of \cref{cor:linearity}] When $M$ is CR spherical and $\mathcal{X}$ is flat, then \cref{prop:2um} implies that traceless component $\II^{\circ}$ vanishes identically and the conclusion follows from \cref{thm:2}.
\end{proof}
\section{An example: the complex Whitney map}
We use the following formula to simplify our calculations for maps between spheres.
\begin{lem}\label{prop:fun}
	Let $M$ be a strictly pseudoconvex real hypersurface defined by $\rho = 0$, with $d\rho\ne 0$ on $M$. Let $\sigma$ be a smooth function in a neighborhood of $M$ and $\hat{\rho} = e^{\sigma} \rho$. Then
	\begin{align}\label{e:tranconf}
	e^{\sigma} r(\hat{\rho})
	=
	r(\rho) + 2\Re (\xi)\, \sigma - |\bar{\partial}_b \sigma|^2.
	\end{align}
	where $\xi$ is the transverse vector field of $\rho$ defined as in \cref{e:tvdef} and the norm of $|\bar{\partial}_b \sigma|^2$ is in terms of $\theta: = \iota^{\ast}(i\bar{\partial}\rho)$.
\end{lem}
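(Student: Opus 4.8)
The plan is to compute $r(\hat\rho)$ directly, by relating the transverse $(1,0)$-field $\hat\xi$ attached to $\hat\rho$ via \cref{e:tvdef} to the transverse field $\xi$ of $\rho$ and to $\bar\partial_b\sigma$. All computations are carried out along $M=\{\rho=0\}$, since $r(\hat\rho)=\hat\rho_{j\kba}\hat\xi^j\hat\xi^{\kba}|_M$ depends only on the values there.

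First I would record the elementary consequences of $\hat\rho=e^{\sigma}\rho$: along $M$ one has $\hat\rho_j=e^{\sigma}\rho_j$ and $\hat\rho_{j\kba}=e^{\sigma}\bigl(\rho_{j\kba}+\rho_j\sigma_{\kba}+\rho_{\kba}\sigma_j\bigr)$, all the $\rho$-multiples dropping out. Feeding these into the two conditions in \cref{e:tvdef} for $\hat\xi$ yields, on $M$,
\begin{equation*}
\hat\xi^j\rho_j=e^{-\sigma},\qquad \hat\xi^j\rho_{j\kba}\equiv-e^{-\sigma}\sigma_{\kba}\ \ \bigl(\mathrm{mod}\ \rho_{\kba}\bigr).
\end{equation*}
Since $\{Z_1,\dots,Z_n,\xi\}$ frames $T^{1,0}$ along $M$ and $\xi^j\rho_j=1$, the first relation forces $\hat\xi=e^{-\sigma}\xi+Y$ with $Y\in\Gamma(T^{1,0}M)$ along $M$.

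The crux is to pin down the tangential correction $Y$. Substituting $\hat\xi=e^{-\sigma}\xi+Y$ into the second relation and contracting with $\overline{Z_\beta}$, every term that is a multiple of $\rho_{\kba}$ — the ``$\mathrm{mod}$'' ambiguity as well as the contribution $\xi^j\rho_{j\kba}\in\CC\,\rho_{\kba}$ — is annihilated because $Z_\beta^k\rho_k=0$. Using that $\rho_{j\kba}Z_\alpha^j\overline{Z_\beta^k}=h_{\alpha\bba}$ along $M$ (this follows from $d\theta=\iota^{\ast}(i\partial\bar\partial\rho)$ and is the content of \cref{e:levimatrix0}), one obtains $Y^\alpha h_{\alpha\bba}=-e^{-\sigma}\,\overline{Z_\beta\sigma}$; hence $Y\sigma=-e^{-\sigma}|\bar\partial_b\sigma|^2$ and $|Y|_\theta^2=e^{-2\sigma}|\bar\partial_b\sigma|^2$, the norm being the Levi norm of $\theta=\iota^{\ast}(i\bar\partial\rho)$.

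Finally I would substitute back into $r(\hat\rho)=\hat\rho_{j\kba}\hat\xi^j\hat\xi^{\kba}|_M$. The cross terms coming from $\rho_j\sigma_{\kba}+\rho_{\kba}\sigma_j$ collapse to $2\Re(\hat\xi\sigma)$ by $\hat\xi^j\rho_j=e^{-\sigma}$, while in $\rho_{j\kba}\hat\xi^j\hat\xi^{\kba}$ the mixed $\xi$--$Y$ contributions vanish (again by $Y^j\rho_j=0$ together with $\xi^j\rho_{j\kba}\in\CC\,\rho_{\kba}$), leaving $e^{-2\sigma}\bigl(r(\rho)+|\bar\partial_b\sigma|^2\bigr)$ since $\rho_{j\kba}\xi^j\xi^{\kba}|_M=r(\rho)$ by definition of the transverse curvature. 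Combined with $\hat\xi\sigma=e^{-\sigma}\bigl(\xi\sigma-|\bar\partial_b\sigma|^2\bigr)$ this gives
\begin{equation*}
r(\hat\rho)=e^{-\sigma}\bigl(r(\rho)+|\bar\partial_b\sigma|^2\bigr)+2e^{-\sigma}\Re(\xi\sigma)-2e^{-\sigma}|\bar\partial_b\sigma|^2=e^{-\sigma}\bigl(r(\rho)+2\Re(\xi)\,\sigma-|\bar\partial_b\sigma|^2\bigr),
\end{equation*}
which is \cref{e:tranconf} after multiplying through by $e^{\sigma}$. The main obstacle is the third step: cleanly isolating $Y$ and identifying $|Y|_\theta^2$ with $e^{-2\sigma}|\bar\partial_b\sigma|^2$ (with the correct power of $e^{\sigma}$). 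The delicacy is entirely in the bookkeeping of the ``$\mathrm{mod}\ \bar\partial\rho$'' ambiguities, and the observation that resolves it is that each such ambiguous term is killed upon pairing with a vector of $T^{1,0}M$, so none of them survives into the final identity. (An alternative would be to route through \cref{prop:transmean}, writing $r(\rho)=|\mean|_\omega^2$ and combining Lee's conformal law for the Tanaka--Webster connection with the conformal change $i\partial\bar\partial\hat\rho$ of the ambient Kähler metric; this seems more cumbersome, as it drags in the ambient connection, whereas the direct route above does not.)
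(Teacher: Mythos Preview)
Your proof is correct and follows the same underlying strategy as the paper: determine the transverse field $\hat\xi$ of $\hat\rho$ as $e^{-\sigma}\xi$ plus a tangential correction governed by $\bar\partial_b\sigma$, then substitute into the definition of the transverse curvature. The paper's proof is a terse sketch that packages the tangential correction via the Lee--Melrose auxiliary matrix $\psi_{j\kba}=\rho_{j\kba}+(1-r)\rho_j\rho_{\kba}$ and its inverse, writing $\hat\xi^j$ in the closed form $\xi^j-\sigma_{\kba}h^{j\kba}$ (up to the $e^{-\sigma}$ normalization), whereas you reach the same correction $Y^\alpha=-e^{-\sigma}h^{\alpha\bba}\sigma_{\bba}$ by contracting against the tangential frame $\{Z_\beta\}$; the two are the same object in different notations, and the remaining arithmetic is identical.
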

\begin{proof} 
Since $J(\rho) \ne 0$, the matrix $\psi_{j\kba}:= \rho_{j\kba} + (1-r)\rho_j \rho_{\kba}$ is invertible; cf. \cite{lee--melrose} (see also \cite{li--son}). Let $\psi^{\kba j}$ be its inverse, $h^{\kba j}
=
\psi^{\kba j} - \xi^{\kba} \xi^{j}
$, and $\hat{\xi}^j = \xi^j - \sigma_{\bar{k}} h^{j\bar{k}}$. We can check that $\hat{\xi}$ satisfies the defining properties of the transverse vector field $\xi$ in \cref{e:tvdef} and thus \cref{e:tranconf} follows.
\end{proof}
\begin{example}\label{ex:whitney}
The complex Whitney map $\Wm_n$ is a quadratic polynomial map which restricts to a CR embedding of $\mathbb{S}^{2n+1}$ into $\mathbb{S}^{4n+1}$. Precisely (see \cite[Chapter~5]{d1993several})
\begin{align}
\Wm(z_1,z_2,\dots , z_n ,w)
=
(z_1, \dots , z_n , z_1w , \dots, z_n w, w^2).
\end{align}
Let $\theta : = (1+|w|^2) \Theta$, where $\Theta$ is the standard pseudohermitian structure on $\mathbb{S}^{2n+1}$. Then $\Wm \colon (\mathbb{S}^{2n+1}, \theta) \to \mathbb{C}^{2n+1}$ is a semi-isometric CR immersion.

We claim that $p \in \mathbb{S}^{2n+1}$ is an umbilical point of $\Wm$ if and only if $p = (0,\dots, 0 , e^{it})$, $t$ is real. 
Indeed, let $\sigma = \log (1+|w|^2)$ so that $\theta = e^{\sigma}\Theta$. By Lee's formula for the Webster scalar curvature \cite{lee1988pseudo},
\begin{align}
	R_{\theta}
	=
	e^{-\sigma} \left( R_{\Theta} + (n+1) \Delta_b \sigma - n(n+1)|\bar{\partial}_b \sigma|^2 \right), \quad R_{\Theta} = n(n+1).
\end{align}
On the other hand, let $\hat{\rho}: = e^{\sigma} (\|Z\|^2 - 1)$, then by \cref{prop:transmean}, 
\begin{align}
	|H|^2 \circ \Wm
	& =
	r(\hat{\rho}) \notag \\
	& =
	e^{-\sigma} \left( r(\|Z\|^2 - 1 ) + 2\Re (\xi)\, \sigma - |\bar{\partial}_b \sigma|^2 \right) \notag \\
	& =
	e^{-\sigma}\left(1 + 2\Re (\xi)\, \sigma - |\bar{\partial}_b \sigma|^2 \right),
\end{align}
where $\xi = z^{j} \partial_j$, and $|\bar{\partial}_b \sigma|^2$ is computed with respect to the standard pseudohermitian structure. Thus,
\begin{align}
	e^{\sigma}|\II^{CR}|^2
	=
	n(n+1) |H|^2 - R_{\theta}
	=
	(n+1)(2n\Re (\xi)\,\sigma - \Delta_b \sigma).
\end{align}
On the sphere with $\rho = \|Z\|^2 -1$, we have that,
\begin{align}
	(n+1)(2n\Re (\xi)\,\sigma - \Delta_b \sigma)
	& =
	2(n+1) (\delta_{jk} - z^{j} \zba^{k}) \sigma_{j\kba} \notag \\
	& =
	2(n+1)\frac{(1-|w|^2)^2}{(1+|w|^2)^2}.
\end{align}
Therefore, $\II^{CR}(p)$ vanishes iff $|w| = 1$ and hence the claim follows.

This example shows that the condition $N\leq 2n$ in \cref{thm:2,prop:2um,cor:2um,thm:umbilichypersurface} is necessary.
\end{example}
\bigskip

\textbf{Acknowledgment.} The author thanks anonymous referees for pointing out and addressing some delicate issues in a previous version of Proposition 2.15 and its proof, and for useful comments.

\end{document}